%

\documentclass[11pt,reqno]{amsart} 
\usepackage{amsthm,amssymb,amsfonts,amsmath}
\usepackage{amsrefs}
\usepackage{fullpage}
\usepackage{bm, bbm, mathrsfs}
\usepackage{graphics, graphicx}
\usepackage[mathscr]{euscript}
\usepackage{stmaryrd}

\theoremstyle{plain}
\newtheorem{theorem}{Theorem}
\newtheorem{prop}[theorem]{Proposition}

\newtheorem{lemma}[theorem]{Lemma}

\theoremstyle{definition}

\theoremstyle{remark}
\newtheorem{remark}{Remark}

\newcommand{\me}{\ensuremath{\mathrm{e}}}

\newcommand{\dif}{\ensuremath{\mathrm{d}}}

\newcommand{\RR}{\ensuremath{\mathbb{R}}}

\newcommand{\ti}{\ensuremath{\tilde}}
\newcommand{\lam}{\ensuremath{\lambda}}

\DeclareMathOperator{\re}{Re}
\DeclareMathOperator{\im}{Im}
\newcommand{\tr}{\ensuremath{\top}}

\newcommand{\calZ}{\ensuremath{\mathcal{Z}}}

\newcommand{\mat}[1]{\ensuremath{\mathsf{#1}}}


\newcommand{\norm}[2]{\ensuremath{\|#1\|_{#2}}}

\newcommand{\intr}{\int_\RR}

\newcommand{\spm}{{\ensuremath{{\scriptscriptstyle\pm}}}}
\renewcommand{\sp}{{\ensuremath{{\scriptscriptstyle +}}}}
\newcommand{\sm}{{\ensuremath{{\scriptscriptstyle -}}}}

\newcommand{\beq}{\begin{equation}}
\newcommand{\eeq}{\end{equation}}
\newcommand\br{\begin{remark}}
\newcommand\er{\end{remark}}
\newcommand\bt{\begin{todo}}
\newcommand\et{\end{todo}}

\numberwithin{equation}{section}

\setcounter{tocdepth}{3}
\title{Stability of viscous detonations for Majda's model}
\author{Jeffrey Humpherys}
\address{Department of Mathematics, Brigham Young University, Provo, UT 84602}
\email{jeffh@math.byu.edu}
\thanks{J.H. was partially supported by NSF grant DMS-0847074 (CAREER)}
\author{Gregory Lyng}
\address{Department of Mathematics, University of Wyoming, Laramie, WY 82071}
\email{glyng@uwyo.edu}
\thanks{G.L. was partially supported by NSF grant DMS-0845127 (CAREER)}
\author{Kevin Zumbrun}
\address{Department of Mathematics, Indiana University, Bloomington, IN 47405}
\email{kzumbrun@indiana.edu}
\thanks{K.Z. was partially supported by NSF grant DMS-0801745}
\date{Last Updated:  \today}
\begin{document}
\begin{abstract} Using analytical and numerical Evans-function techniques, we examine the spectral stability of strong-detonation-wave solutions of Majda's scalar model for a reacting gas mixture with an Arrhenius-type ignition function.  We introduce an 
energy estimate to limit possible unstable eigenvalues to a compact region in the unstable complex half plane, and we use a numerical approximation of the Evans function to search for possible unstable eigenvalues in this region. Our results show, for the parameter values tested, that these waves are spectrally stable. Combining these numerical results with the pointwise Green function analysis of Lyng, Raoofi, Texier, \& Zumbrun [\emph{J. Differential Equations} \textbf{233} (2007), no. 2, 654--698.], we conclude that these waves are nonlinearly stable. This represents the first demonstration of nonlinear stability for detonation-wave solutions of the Majda model without a smallness assumption.  Notably, our results indicate that, for the simplified Majda model, there does not occur, either in a normal parameter range or in the limit of high activation energy, Hopf bifurcation to ``galloping'' or ``pulsating'' solutions as is observed in the full reactive Navier--Stokes equations.  This answers in the negative a question posed by Majda as to whether the scalar detonation model captures this aspect of detonation behavior. 
\end{abstract}
\maketitle

\section{Introduction}\label{sec:intro}
\subsection{Overview}\label{ssec:overview}
Majda's qualitative model for detonation \cite{M_SIAMJAM81} has served as an important test-bed for both theory and computation since its introduction in the early 1980s; the model captures many of the phenomena of the much more complicated reactive Euler and Navier--Stokes equations governing physical detonations without their full technical complexities. In this paper, we use analytical and numerical Evans-function techniques to determine the stability of strong-detonation-wave solutions of Majda's model. Our motivation is twofold. First, the calculations here provide a useful stepping stone in the process of developing numerical Evans-function techniques suitable to study the stability of detonation-wave solutions of the Navier--Stokes equations modeling a mixture of chemically reacting gases, a computationally intensive calculation that has up to now not been 
attempted.\footnote{See, e.g., the descriptions in \cites{LS,HZ_QAM,BZ_majda-znd} of the complexity of the significantly less intensive computation for the corresponding inviscid (ZND) problem.} 
Second, we address a question about the model originally posed by Majda \cite{M_book} himself. Namely, Majda asked whether or not the reduced, scalar model retains enough of the structure of the physical system to capture the complicated Hopf bifurcation/pulsating instability phenomena that occur for the full equations in the high-activation energy limit. 

Our results indicate that numerical Evans-function analysis of viscous detonation waves is feasible across essentially the full range of viscosity,  activation energy, and other parameters.  Indeed, in follow-up work \cite{BHLZ} we 
have extended
the approach pursued here to the full reactive Navier--Stokes equations, with interesting preliminary results.  Concerning behavior of Majda's model, of great interest in its own right given the large amount of attention given this model in the literature, we find that \emph{strong detonation waves appear to be universally stable,} across the entire range of activation energy and other parameters.  That is, we answer Majda's question in the negative; we show that the Majda model does not capture the instability and bifurcation phenomena of the full model.


\subsection{Background: scalar combustion models}\label{ssec:back}
Even in one space dimension, the compressible Navier--Stokes equations for a reacting gas mixture (see \eqref{eq:rns} below) are quite complex and are known to support solutions exhibiting diverse behaviors. For example, these equations admit distinct types of nonlinear combustion-wave solutions---detonations \& deflagrations---whose structural features are strikingly different. It is thus natural to seek models of reduced complexity which retain essential features of the physical system. In this vein, Majda~\cite{M_SIAMJAM81} (see also his related work with Colella \& Roytburd~\cites{CMR_SIAMJSSC86} and with Rosales~\cite{RM_SIAMJAM83}) introduced a ``qualitative model'' for gas-dynamical combustion with the aim of studying scenarios in which the nonlinear motion of the gas (e.g., shock waves) and the chemistry (chemical reactions among various species making up the gas mixture) are strongly coupled.\footnote{At around the same time Fickett \cite{F_AJP79} introduced a similar scalar combustion model. The principal difference between the model proposed by Fickett and Majda's model is the lack of a diffusive term in the former. Radulescu \& Tang \cite{RT_PRL11} have recently shown that a version of Fickett's model with a state-dependent forcing term better captures some of the nonlinear dynamics of the physical system; see \S\ref{sec:conclusions} below for a further discussion of this point.} That is, the movement of the gas exerts a significant influence on the chemical reactions and vice versa. The original formulation of the model \cite{M_SIAMJAM81} reads 
\begin{subequations}\label{eq:mm0}
\begin{align}
(u+qz)_t&+f(u)_x=Bu_{xx}\,, \label{eq:mm0-u}\\
z_t&=-k\phi(u)z\,.\label{eq:mm0-z}
\end{align}
\end{subequations}
Here and below, subscripts denote differentiation with respect to $x$ (a Lagrangian label) and to $t$ (time); the unknown function $u=u(x,t)$ is a lumped scalar variable representing various aspects of density, velocity, and temperature; the other unknown $z=z(x,t)\in[0,1]$ is the mass fraction of reactant in a simple one-step reaction scheme; the flux $f$ is a nonlinear convex function; $\phi$ is the ignition function---it turns on the reaction; and $k$, $q$, and $B$ are positive constants measuring reaction rate, heat release, and diffusivity, respectively. (In \S\ref{sec:prelim} below, we make precise statements about the nature of $f$, $\phi$, and the parameters for the version of \eqref{eq:mm0} which forms the basis of our analysis.) 
The key point is that the system \eqref{eq:mm0} is expected, on the one hand, to retain some of the features of the relevant physical equations which, in this case (one-step reaction, one space dimension, Lagrangian coordinates), are given by    
\begin{subequations}\label{eq:rns}
\begin{align}
\tau_t-u_x & = 0\,, \\
u_t+p_x&=\left(\frac{\mu u_x}{\tau}\right)_x\,,\\
E_t+(pu)_x&=\left(\frac{\mu u u_x}{\tau}+\frac{\kappa T_x}{\tau}+\frac{qdz_x}{\tau^2}\right)_x\,,\\
z_t&=-k\phi(T)z+\left(\frac{dz_x}{\tau^2}\right)_x\,.
\end{align}
\end{subequations}
A detailed description and derivation of \eqref{eq:rns} can be found, e.g., in the text of Williams~\cite{Williams}*{pp. 2--4, 604--617}. 
On the other hand, system \eqref{eq:mm0} is simple enough to provide a mathematically tractable setting in which the nonlinear fluid-chemistry interactions can be studied. Model \eqref{eq:mm0} can be thought of as playing the role for chemically reacting mixtures of gases that Burgers equation plays in the theory of (nonreacting) compressible gas dynamics.  

\begin{remark}\label{wnormk}
Majda~\cite{M_SIAMJAM81} originally motivated the system \eqref{eq:mm0}  simply by making a plausible argument that it retains the essential features of the fluid-chemistry interactions in \eqref{eq:rns}. Indeed, Gardner \cite{G_TAMS83} proved the existence of traveling-wave solutions of \eqref{eq:rns}, roughly speaking, by deforming the Navier--Stokes phase space to that of the model system \eqref{eq:mm0}. Moreover, in subsequent work with Rosales~\cite{RM_SIAMJAM83}, Majda showed how the model \eqref{eq:mm0} can be connected to the physical equations in a certain asymptotic regime. More precisely, Rosales \& Majda derived a simplified model for studying detonation waves in the ``Mach $1+\epsilon$'' regime where the  wave speed is is close to the sound speed. The model they derived takes the form\footnote{The $x$-coordinate appearing \eqref{eq:mr} is \emph{not} the standard spatial coordinate. Rather, it arises in the asymptotic analysis as a scaled space-time measurement of distance from the reaction zone; see \cites{RM_SIAMJAM83,CMR_SIAMJSSC86}.}
\begin{subequations}\label{eq:mr}
\begin{align}
u_t&+\left(\frac{u^2}{2}-Qz\right)_x=Bu_{xx}\,,\\
z_x&= K\phi(u)x\,,
\end{align}
\end{subequations}
and, with an appropriate choice of flux $f$ in \eqref{eq:mm0-u} and appropriate identifications between $q$, $k$ and $Q$, $K$, one connection between \eqref{eq:mm0} and \eqref{eq:mr} is that they support the same traveling waves \cite{RM_SIAMJAM83}. We note, however, as discussed in \cite{Z_ARMA11}, that the connection of these systems to the physical equations is in the regime of small heat release where strong detonation waves are known to be stable. In the setting of \eqref{eq:rns}, this was shown by Lyng \& Zumbrun \cite{LZ_ARMA04} by a continuity argument in the Evans-function framework together with Kawashima-type energy estimates for the physical system. More recently, Zumbrun \cite{Z_hf} has established the analogous result for the inviscid (ZND) version of \eqref{eq:rns}; his approach similarly relies on a continuity argument which is combined with a nonstandard asymptotic analysis to obtain control of high frequencies. The link to the physical system thus sheds little light on the question of existence/nonexistence
of galloping instabilities in the scalar model. In this context, we see that the connection of the scalar models to the physical equations is indeed qualitative and not through formal asymptotics. 
\end{remark}

We recall the two classical reductions of \eqref{eq:rns} in the standard formulation of combustion theory. These are the Chapman--Jouguet~(CJ) model (an inviscid model which features an infinitely fast reaction) and the Zeldovich--von Neumann--D\"oring~(ZND) model (likewise inviscid but featuring a finite reaction rate). Because the mathematical theory linking the three standard (CJ, ZND, Navier--Stokes) models is incomplete, scalar models like \eqref{eq:mm0} are especially attractive because they provide a tractable starting point for the development of a complete mathematical framework for understanding the equations which incorporate combustion processes with compressible gas dynamics. For example, Levy~\cite{L_CPDE92} used a vanishing viscosity (ZND limit) method to prove existence, uniqueness and continuous dependence on initial data for the initial-value problem for \eqref{eq:mm0} with $B=0$. In addition, in a mathematical study of the transition from CJ to ZND theories, Li \& Zhang~\cite{LZ_SIAMJMA02} have analyzed the infinite-reaction-rate (CJ) limit of the ZND ($B=0$) version of \eqref{eq:mm0}. There are other examples, e.g., \cites{YT_ATA84,LZ_ARMA91,ST_JDE94}. Finally, such scalar models also serve a valuable role as testbeds for the development of numerical methods. Colella, Majda, \& Roytburd~\cite{CMR_SIAMJSSC86} used \eqref{eq:mr} as a basis for the development of appropriate numerical schemes for the Navier--Stokes equations. Indeed, in these problems, the spatial scales relevant for the fluid motion and the reaction may be widely disparate; this can lead to numerical stiffness. It is thus of interest to develop codes which are capable of dealing with these separated scales. Simple scalar models can serve as valuable test cases in this development.  See also, e.g., \cites{Y_MC04,ZY_JCM05}. 

Our principal interest here is in the stability of strong detonation-wave solutions of Majda's scalar combustion model with an Arrhenius-type ignition 
function.\footnote{We discuss the extensions of the ideas and techniques of this paper to the interesting cases of weak detonations and other kinds of ignition functions, notably the ``bump'' ignition function of Lyng \& Zumbrun \cite{LZ_PD04}, in \S\ref{sec:conclusions} below.} 
As noted above, this analysis is partially motivated by a question posed by Majda in his monograph \cite{M_book}. Namely, he asked whether his reduced, scalar model retains enough of the structure of the physical system to capture the complicated Hopf bifurcation/pulsating instability phenomena that occur for the full equations in the high-activation energy limit.  Indeed, the stability properties of these waves speak to the quality of the Majda model as a reduced model for \eqref{eq:rns}. Here we find, for the parameter ranges in our study, that these waves are spectrally stable (and hence nonlinearly stable, see \S\ref{ssec:stability_evans}). Given the expectation of \emph{instability} for the physical system \eqref{eq:rns}, this finding provides a concrete demonstration of the limitations of the simplified model.

We note that verification of stability requires an \emph{all-parameters}
study, adding an additional layer of complexity beyond 
inclusion of second-order transport effects as compared to
previous normal-modes stability analyses 
(e.g., \cite{LS}) concerning the ZND, or reacting Euler equations,
in which the focus is typically on \emph{instability} in certain specific
parameter regimes.
In this regard, it seems interesting to mention a somewhat surprising 
phenomenon special to the viscous case, connected with
the small-activation energy limit $\mathcal{E}_A\to 0$ that in
the ZND context is so simple as to be usually disregarded.
This has to do with the ``cold-boundary phenomenon,'' or
low-temperature cutoff that is imposed on the ignition function
in order to permit traveling waves, which, in the small-activation
energy limit, turns out to dominate the profile existence problem.
This point is discussed further in Appendix \ref{sec:zeroe}.

%

\begin{remark}
We note that Barker \& Zumbrun \cite{BZ_majda-znd} have carried out an analogous study for the inviscid problem ($B=0$ in \eqref{eq:mm0}) and found no instability, either for the Arrhenius-type ignition function considered here or the alternative bump-type ignition function proposed in \cite{LZ_PD04}.  One might imagine that the additional degrees of freedom added by viscosity might allow for an instability. 
Indeed, Zumbrun \cite{Z_ARMA11} has shown that viscous detonation stability \emph{for all values of viscosity} implies inviscid (ZND) stability through a singular small-viscosity limit; thus, viscous stability over a range of viscosity parameters, as we investigate here, is a theoretically stronger condition than inviscid (ZND) stability.
However, our results indicate that despite these additional degrees of freedom, detonations of the viscous Majda model, like those of the inviscid model, \emph{are universally stable}.
\end{remark}

\subsection{Traveling waves \& stability}
\subsubsection{Strong \& weak detonations}
The main result of Majda's original analysis \cite{M_SIAMJAM81} is a proof of the existence of traveling-wave solutions---strong and weak detonations---for \eqref{eq:mm0}. These waves are compressive combustion waves which are analogous to the corresponding waves in standard combustion theory. Notably, some of these waves are nonmonotone with a ``combustion spike'' in qualitative agreement with classical combustion theory \cite{CourantFriedrichs}. Various authors have extended the model and/or established the existence of traveling-wave solutions in a variety of nearby scenarios. For example, Larrouturou~\cite{L_NA85} proved the existence of strong and weak detonations with an additional term $Dz_{xx}$ representing diffusion in the mass fraction equation \eqref{eq:mm0-z}, namely,
\begin{subequations}\label{eq:mm-d}
\begin{align}
(u+qz)_t+f(u)_x& =Bu_{xx}\,,\label{eq:mmd-u} \\
z_t & = Dz_{xx}-k\phi(u)z\,.\label{eq:mmd-z}
\end{align}
\end{subequations}
Again, some of these waves feature a nonmonotone spike in the profile. Logan \& Dunbar~\cite{LD_IMAJAM92} established the existence of traveling-wave solutions in a version of \eqref{eq:mm0} expanded to include reversible chemical reactions.  
Later, Razani~\cite{R_JMAA04} studied the existence of the Chapman-Jouguet detonation, a particular compressive traveling-wave solution in which the wave speed is the minimum possible, in \eqref{eq:mm-d}, and Lyng \& Zumbrun~\cite{LZ_PD04} found deflagrations---expansive combustion waves---in a version of \eqref{eq:mm0} with a modified ``bump-type'' ignition function $\phi$.
\subsubsection{Stability}
Our primary interest is in the stability of these traveling-wave solutions. It is well known that detonation waves have sensitive stability properties. For example, experiments often reveal that detonation waves have a complicated structure, typically featuring transverse wave structures and high-pressure zones, that is at odds with the classical ZND description of such waves \cite{FickettDavis}. Thus, it is of considerable interest to understand the stability properties of the basic wave solutions of combustion models. Below, we briefly survey the known stability results for the Majda model.

\subsubsection{Stability: weighted norms \& energy estimates}
A number of stability results have been obtained directly by various combinations of energy estimates, spectral analysis, and weighted norms. For example, Liu \& Ying~\cite{LY_SIAMJMA95} established, via energy estimates, the nonlinear stability of strong-detonation solutions of \eqref{eq:mm0} for sufficiently small heat release $q$; this analysis was later refined by Ying, Yang, \& Zhu~\cite{YYZ_JJIAM99}. Additionally, Ying, Yang, \& Zhu~\cite{YYZ_JDE99} extended the small-$q$ nonlinear stability result for strong detonations to \eqref{eq:mm-d}.  Using a combination of spectral analysis and Sattinger's technique \cite{S_AM76} of weighted norms, Li, Liu, \& Tan~\cite{LLT_JMAA96}  proved the nonlinear stability  of strong detonations for \eqref{eq:mm-d} under the assumption of a sufficiently small reaction rate. Similarly, Roquejoffre \& Vila~\cite{RV_AA98} studied the spectral stability of strong detonations of arbitrary amplitude in the ZND (vanishing viscosity) limit. Their result can be combined with a weighted norm argument to obtain a nonlinear result. We observe that all of the aforementioned results require, in some fashion, a smallness condition on the wave and/or model parameters.  
Outside of the Evans-function framework described below, we know of \emph{no} stability results for weak-detonation solutions of \eqref{eq:mm0} or \eqref{eq:mm-d}.  There are, however, results for weak-detonation solutions of the closely related Rosales-Majda model \eqref{eq:mr}. Roughly, Liu \& Yu~\cite{LY_CMP99} proved a nonlinear stability result for small perturbations of large waves while Szepessy~\cite{S_CMP99} established such a result for large perturbations of small waves. Notably, Szepessy's result is restricted to the Burgers nonlinearity $f(u)=u^2/2$ as his technique utilizes the Hopf--Cole transform to handle the nonlinearity.

\subsubsection{Stability: Evans function}\label{ssec:stability_evans}
We denote by $L$ the linear operator obtained by linearizing about the traveling wave in question. The Evans function, denoted by $E$, associated with $L$ is an analytic function of frequencies $\lambda$ with $\re\lambda\geq 0$ whose zeros correspond to eigenvalues of $L$.  Initiating an Evans-function program for detonation waves, Lyng \& Zumbrun~\cite{LZ_PD04} obtained partial spectral information for strong- \emph{and} weak-detonation solutions of \eqref{eq:mm0} without any restriction on the size of the wave or parameter values. In particular, their stability index calculations reveal a restriction on the nature of any possible transition to instability.  Indeed, it suggests the possibility of the ``galloping instability''---a well-known phenomenon in propagating combustion waves \cite{FickettDavis}.\footnote{The galloping instability has been investigated in the setting of the Majda model and beyond by Texier \& Zumbrun~\cites{TZ_MAA05,TZ_PD08,TZ_ARMA08,TZ_gallop}.}
 In addition, by a careful analysis of the relevant Evans functions, Zumbrun \cite{Z_ARMA11} has shown that stability of strong-detonation solutions of \eqref{eq:rns} in the small viscosity limit is equivalent to stability of the limiting (zero viscosity) ZND detonation together with stability of the viscous (purely gas-dynamical) profile for the associated Neumann shock. Using Zumbrun's result,  Jung \& Yao \cite{JY_QAM12} have extended, subject to restrictions on the nature of the ignition function $\phi$, the spectral stability result of Roquejoffre \& Vila \cite{RV_AA98} which was valid for \eqref{eq:mm0} to the viscous model considered here (see \eqref{eq:mm} below). Here, our interest in the Evans function---associated with the linearization $L$---is based on the fact that the spectral information encoded in the zeros of $E$ can be used to draw conclusions about the \emph{nonlinear} stability of the wave in question. 
\begin{prop}[Lyng-Raoofi-Texier-Zumbrun \cite{LRTZ_JDE07}]\label{prop:lrtz}Under the Evans-function condition 
\begin{equation}
\label{eq:evans_condition}
\text{$E(\cdot)$ has precisely one zero in $\{\re\lambda\geq 0\}$ (necessarily at $\lambda=0$)\,,}
\tag{$\star$}
\end{equation}
a strong or weak detonation wave of \eqref{eq:mm} is $\hat L^\infty\to L^p$ nonlinearly phase-asymptotically orbitally stable, for $p>1$. Here, 
\begin{equation}
\label{eq:l_infty_hat}
\hat L^\infty(\mathbb{R}):=\{f\in\mathscr{S}'(\mathbb{R})\;:\;(1+|\cdot|)^{3/2}f(\cdot)\in L^\infty(\mathbb{R})\}.
\end{equation}
\end{prop}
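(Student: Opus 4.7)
The plan is to follow the pointwise semigroup/Green function framework developed by Zumbrun and collaborators for viscous conservation laws, adapted to the reactive setting. Write the solution as $U(x,t) = \bar{U}(x - \delta(t)) + v(x,t)$, where $\bar{U} = (\bar u, \bar z)^\tr$ is the detonation profile in the traveling frame and $\delta(t)$ is a modulation parameter to be determined. Substituting yields a perturbation equation of the form $v_t = Lv + N(v,\dot\delta)$, where $L$ is the linearization of the Majda system about $\bar U$ and $N$ is quadratic in its arguments. Translation invariance forces $L\bar{U}' = 0$, so $\bar{U}'$ is precisely the zero eigenfunction singled out by the assumption $(\star)$.

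\textbf{Green function bounds.} The core analytic step is to produce sharp pointwise bounds on the Green function $G(x,t;y)$ of $\me^{tL}$ via the inverse Laplace transform
\begin{equation*}
G(x,t;y) = \frac{1}{2\pi \mi} \oint \me^{\lambda t}\, G_\lambda(x,y)\, \dif\lambda,
\end{equation*}
where the resolvent kernel $G_\lambda$ is assembled from decaying solutions of the eigenvalue ODE at $\pm\infty$ and has poles precisely at the zeros of $E(\lambda)$. Under $(\star)$ there is a unique pole, at $\lambda=0$, which a standard stability-index calculation shows to be simple; deforming the contour into $\{\re\lambda < 0\}$ yields the decomposition
\begin{equation*}
G(x,t;y) = \bar{U}'(x)\,e(y,t) + \widetilde{G}(x,t;y),
\end{equation*}
where the excited term $\bar{U}'(x)\,e(y,t)$ is the residue at $\lambda=0$ and $\widetilde{G}$ satisfies Gaussian-type bounds away from the characteristic carrying the purely convected reactant mode. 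A prerequisite is an essential-spectrum bound of the form $\{\re\lambda \leq -\eta\}$ near the origin, obtained from the endstate dispersion relations together with the Arrhenius cold-boundary cutoff on $\phi$, which provides the reactant decay rate at $x=-\infty$.

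\textbf{Modulation and nonlinear iteration.} Given this decomposition, one defines $\delta(t)$ implicitly by projecting out the excited mode, so that $\widetilde v := v - \delta(t)\bar{U}'$ satisfies a Duhamel equation driven only by $\widetilde G$, while $\dot\delta$ obeys a scalar ODE driven by $e(y,\cdot)$ paired against the initial data and the nonlinearity. The weight $(1+|x|)^{3/2}$ appearing in \eqref{eq:l_infty_hat} is then exactly the threshold needed so that convolution of an $\hat L^\infty$ datum against $\widetilde G$ produces integrable-in-time decay in $L^p$; combined with the quadratic nature of $N$, this lets a contraction/continuation argument close for sufficiently small data and yields convergence $\delta(t)\to\delta_\infty$ together with $\|\widetilde v(\cdot,t)\|_{L^p}\to 0$, i.e., phase-asymptotic orbital stability.

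\textbf{Main obstacle.} The principal difficulty, absent from the purely parabolic shock case, is the mixed hyperbolic-parabolic character of \eqref{eq:mm0}: the reactant equation carries no diffusion, so the $z$-component of $G$ propagates along a characteristic with only the reaction term $-k\phi(\bar u)z$ to damp it, and only where $\phi(\bar u)>0$. Ahead of the wave, $\bar u$ sits below the ignition temperature and $\phi\equiv 0$, so the convected mode is genuinely nontrivial there. Tracking this component separately, and exploiting the ignition structure to recover decay once a perturbation is transported across the shock into the reaction zone, is the technical heart of the argument of \cite{LRTZ_JDE07}, and is precisely what the weighted space $\hat L^\infty$ is tailored to accommodate.
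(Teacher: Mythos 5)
The paper does not prove this proposition at all: it is quoted verbatim from the cited reference \cite{LRTZ_JDE07}, and your outline --- perturbation ansatz with modulated phase, resolvent kernel built from decaying solutions of the eigenvalue ODE with poles at zeros of $E$, contour deformation giving the decomposition $G=\bar U'(x)e(y,t)+\widetilde G$, and a Duhamel iteration closed by the $(1+|x|)^{3/2}$ weight --- is indeed the pointwise Green-function strategy of that reference. So at the level of architecture your proposal is the right one and matches the source the paper relies on.

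Two points are off, however. First, your ``main obstacle'' paragraph analyzes the wrong system: the proposition concerns \eqref{eq:mm}, in which the species-diffusion coefficient $D$ is a \emph{positive} constant, so the reactant equation is parabolic and there is no undamped purely convected $z$-mode ahead of the wave; the hyperbolic reactant characteristic you describe belongs to the original Majda model \eqref{eq:mm0} (the $D=0$ case), which is not the system this statement addresses. The genuine delicacies in \cite{LRTZ_JDE07} lie elsewhere: the low-frequency analysis near $\lambda=0$, and the fact that the result covers \emph{weak} detonations, which are undercompressive, so the structure of the excited term and of the phase-shift functional differs from the Lax-type shock case (the present paper itself notes that integrated coordinates fail to remove the zero eigenvalue for weak detonations). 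Second, the simplicity of the zero of $E$ at the origin is not something you get to derive from ``a standard stability-index calculation''; it is part of the hypothesis \eqref{eq:evans_condition} (``precisely one zero,'' counted with multiplicity). Neither issue invalidates the overall scheme, but as written the paragraph identifying ``the technical heart'' points at a difficulty that does not arise for \eqref{eq:mm}.
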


\br
We recall that if $X$ and $Y$ are Banach spaces, a traveling-wave solution is said to be $X\to Y$ \emph{nonlinearly orbitally stable} if, given initial data close in $X$ to the wave, there is a phase shift $\delta=\delta(t)$ such that the perturbed solution approaches the $\delta$-shifted profile in $Y$ as $t\to \infty$. If $\delta(t)$ converges to a limiting value $\delta(+\infty)$, we say that the wave is \emph{nonlinearly phase-asymptotically orbitally stable}.
\er

\noindent
In light of Proposition~\ref{prop:lrtz}, our primary purpose here is to locate the zeros (if any) of the Evans function. In this paper we restrict our attention to the case of strong detonations; we plan to treat the case of weak detonations in a future work. Because, in all but the most trivial cases, the Evans function is typically too complicated to be computed analytically, our approach is based on numerical approximation of the Evans function.

\subsection{Outline}
In \S\ref{sec:prelim} we describe the Majda model and briefly review the existence  problem for strong and weak detonations.  In \S\ref{sec:spectral}, we outline the spectral stability problem, the construction of the Evans function, and our algorithm for approximating the Evans function and locating its zeros. We also 
bound by means of a new energy estimate the size of the region of the 
complex plane in which one may find unstable eigenvalues. 
The final sections, \S\ref{sec:experiments} and \S\ref{sec:conclusions}, contain descriptions, results, and a discussion of our numerical calculations. Appendix \ref{sec:zeroe} contains a description of the the connection and stability problems in the limit of zero activation energy. This limiting scenario forms one of the ``organizing centers'' for the problem; see the discussion in \S\ref{sec:conclusions}.


\section{Preliminaries}\label{sec:prelim}
\subsection{Model}\label{ssec:model}
\subsubsection{Basic Assumptions}
Following Lyng, Raoofi, Texier \& Zumbrun \cite{LRTZ_JDE07}, we begin with the following version of the Majda model: 
\begin{subequations}\label{eq:mm}
\begin{align}
u_t+f(u)_x& =Bu_{xx}+qk\phi(u)z\,,\label{eq:mm1} \\
z_t & = Dz_{xx}-k\phi(u)z\,.\label{eq:mm2}
\end{align}
\end{subequations}
 Here, the real-valued unknown $u$ is an aggregated variable to be thought of as representing variously density, velocity, and/or temperature.  The unknown $z\in[0,1]$ is the mass fraction of reactant. The reaction constants, both positive, are $q$---the heat release parameter and $k$---the reaction rate. Here, $q>0$ corresponds to an exothermic reaction.
The diffusion coefficients $B$ and $D$ are also assumed to be positive constants. 
General assumptions, following \cite{M_SIAMJAM81}, are that $f\in C^2(\RR)$ with
\begin{equation}\label{eq:f-u}
\frac{\dif f}{\dif u}>0\,,\quad \frac{\dif^2 f}{\dif u^2}>0\,.
\end{equation}
%
%
\br\label{rem:flux}
Monotonicity and convexity of the flux $f$ are required to guarantee that the model reproduces, qualitatively, the the solution structure of the gas equations; see Figure \ref{fig:cj_diagram1}.
For our numerical investigations, we use the Burgers flux
\beq\label{eq:burgers}
f(u)=\frac{u^2}{2}\,,
\eeq
and, in keeping with the the restrictions imposed by \eqref{eq:f-u}, we shall restrict our attention to states in the ``physical'' domain $u\geq 0$. 
\er
Finally, we assume that 
the ignition function $\phi$ is a smooth step-type function. In general, it is assumed that $\phi$ satisfies
\beq
\phi(u)=0\;\text{for}\; u\leq u_\mathrm{ig}\,,\,\quad \phi(u)>0\; \text{for} \;u>u_\mathrm{ig}\,,\,\quad
\phi\in C^1.
\label{eq:phi}
\eeq
Here $u_\mathrm{ig}$ is a fixed lower ignition threshold. Thus, we are assuming ignition-temperature kinetics. For our numerical computations, we define $\phi$ by 
\beq\label{eq:experimentalphi}
\phi(u)=\begin{cases}
0, &\text{if}\; u\leq u_\mathrm{ig}\,, \\
\me^{-\mathcal{E}_A/(u-u_\mathrm{ig})}, &\text{if}\; u>u_\mathrm{ig}\,,
\end{cases}
\eeq
where the positive parameter $\mathcal{E}_A$ is the activation energy. 

\br\label{rem:eos}
\noindent
\begin{enumerate}
\item[(a)]
A natural generalization of \eqref{eq:mm1} is to allow the flux function $f$ to depend also on the mass fraction $z$. The interpretation of such a $z$-dependent flux is that the ``equation of state'' (which specifies the physical properties of the gas) depends on the ratio of reactant to product in the gas mixture. In the physical setting of the compressible Navier--Stokes equations for a reacting mixture equipped with a one-step chemical reaction scheme, 
\[
R\to P,
\]
which converts polytropic ideal reactant to polytropic ideal product, it is straightforward to see that conservation of mass implies that the gas constants for reactant and product must be the same \cite{Williams}. On the other hand, if $z$ is thought of as representing the progress of a large system of reactions, the effective gas constants of the reactant and product mixtures may very well be different. In this case, the equations of state modeling the total mixture should depend on $z$.  The papers of Chen, Hoff \& Trivisa \cite{CHT_ARMA03}, Lyng \& Zumbrun \cite{LZ_PD04}, and Rosales \& Majda \cite{RM_SIAMJAM83} contain related discussions. Although we do not pursue this here, this is a potential direction for future study.
\item[(b)] We note that Larrouturou's \cite{L_NA85} dissipative extension of the Majda model, equation \eqref{eq:mm-d},  does not agree with \eqref{eq:mm}. However, one reason to prefer the system \eqref{eq:mm} is that its vectorial version encompasses the artificial viscosity version of the Navier--Stokes equations for a reacting fluid mixture; see \cite{LRTZ_JDE07}.
\end{enumerate}
\er

\subsection{Connections: the profile existence problem}
\label{ssec:connections}
\subsubsection{Basic Analysis}
\label{sssec:basic}
Our interest is in traveling-wave solutions, or viscous profiles, of \eqref{eq:mm}, i.e., solutions of the form 
\beq\label{eq:twa}
u(x,t)=\hat u(x-st),\quad z(x,t)=\hat z(x-st),\quad s>0,
\eeq
which connect an unburned state $(u_\sp,z_\sp)=(u_\sp,1)$ to a completely burned state $(u_\sm,z_\sm)=(u_\sm,0)$. These are combustion waves which move from left to right leaving completely burned gas in their wake. 
Thus, we find that the traveling-wave ansatz \eqref{eq:twa} leads, after an integration, from \eqref{eq:mm} to the system of ordinary differential equations (ODEs),
\begin{subequations}\label{eq:tw}
\begin{align}
\hat u'&=B^{-1}\big(f(\hat u)-f(u_\sm)-s(\hat u-u_\sm)-q(s\hat z+D\hat y)\big)\,, \label{eq:tw1}\\
\hat z'&=\hat y\,,\label{eq:tw2}\\
\hat y'&=D^{-1}\big(-s\hat y+k\phi(\hat u)\hat z\big)\,, \label{eq:tw3}
\end{align}
\end{subequations}
where we have used $\hat y:=\hat z'$ to write the system in first order and $'$ denotes differentiation with respect to the variable $\xi:=x-st$.
We assume that the end states are such that 
\begin{equation}
u_\mathrm{ig}<u_\sm
\label{eq:u_minus}
\end{equation}
and 
\beq
u_\sp<u_\mathrm{ig}
\label{eq:u_plus}
\eeq
so that \eqref{eq:phi} implies
\begin{equation}
\phi(u_\sm)>0,\quad \phi(u_\sp)=0,\quad \phi'(u_\sp)=0\,.
\label{eq:phi_endstates}
\end{equation}
Equation~\eqref{eq:u_plus} has the physical interpretation that the unburned end state is outside of the support of the ignition function so that there is no chemical reaction on the unburnt side. 

A necessary condition for the existence of a connection is that the end states $(u_\spm,z_\spm)$ be equilibria of the traveling-wave equation \eqref{eq:tw}. This leads to the Rankine-Hugoniot condition
\begin{equation}
f(u_\sp)-f(u_\sm)=sq+s(u_\sp-u_\sm)\,,
\label{eq:rh}
\tag{RH}
\end{equation}
together with the requirements that $y_\spm=0$ and
\begin{equation}
k\phi(u_\spm)z_\spm=0\,.
\label{eq:addl_cond}
\end{equation}
Since $z_\sm=0$ and $\phi(u_\sp)=0$, it is easy to see that \eqref{eq:addl_cond} is satisfied. 
We write $a_\spm:=f'(u_\spm)$. 
If $u_\sp<u_\sm$, the combustion wave is a \emph{detonation}. 
In this case, the traveling-wave profile is said to be a \emph{strong detonation} if 
\begin{equation}
a_\sm>s>a_\sp.
\label{eq:strongdet}
\end{equation}
It is said to be a \emph{weak detonation} if 
\begin{equation}
s> a_\sm, \, a_\sp.
\label{eq:weakdet}
\end{equation}

\br\label{rem:deflagrations}
In the boundary case,  $a_\sm=s>a_\sp$, the wave is known as
a \emph{Chapman-Jouguet} detonation. We do not consider these waves here. Henceforth, we use the notations $u_\sm^\mathrm{w}$, $u_\sm^\mathrm{cj}$, and $u_\sm^\mathrm{s}$ when we want to distinguish the possible burned end states, and $u_\sm$ without superscript stands for any burned end state at $-\infty$. ``Expansive'' traveling waves with $u_\sp>u_\sm$ are called \emph{deflagrations}. In the original formulation of the Majda model \cite{M_SIAMJAM81}, no such waves exist. However, Lyng \& Zumbrun \cite{LZ_PD04} showed that, if the ignition function is suitably modified, such waves may exist. We do not consider deflagrations here. 
\er
\begin{figure}[ht] 
   \centering
   \includegraphics[width=7cm]{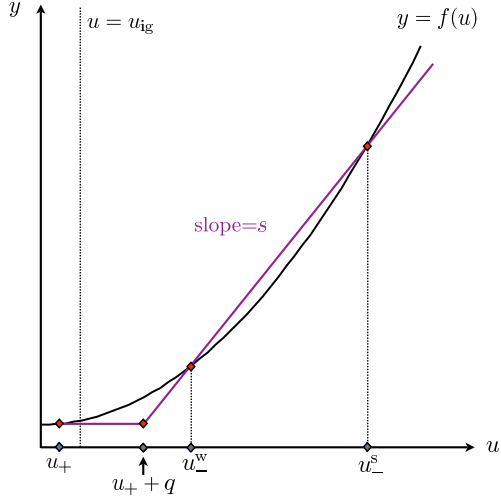}    \caption{The CJ diagram.}
   \label{fig:cj_diagram1}
\end{figure}

The following proposition describes the solutions of \eqref{eq:rh}.
\begin{prop}[\cites{M_SIAMJAM81,LRTZ_JDE07}]\label{prop:RHexist}

For fixed $u_\sp$, there exists a speed $s^\mathrm{cj}(u_\sp)$ such that
\begin{itemize}
\item for $s>s^\mathrm{cj}$ there
exist two states $u_\sm>u_\sp$ for which \eqref{eq:rh}
(but not necessarily \eqref{eq:phi_endstates})
is satisfied (weak and strong detonation),
\item for $s=s^\mathrm{cj}$ there exists one solution $u_\sm^\mathrm{cj}$ (Chapman--Jouguet detonation), and
\item for $s< s^\mathrm{cj}$, there exist no solutions $u_\sm>u_\sp$. 
\end{itemize}
See \textsc{Figure}~\ref{fig:cj_diagram1}.
\end{prop}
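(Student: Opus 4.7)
My plan is to recast the Rankine--Hugoniot condition as the vanishing of an auxiliary function and to exploit the strict convexity of $f$ to reduce the existence problem to a straightforward one-variable analysis.

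First I would define
\[
\Psi(u;s) := f(u) - f(u_+) - s(u-u_+) + sq,
\]
so that \eqref{eq:rh} is equivalent to $\Psi(u_-;s)=0$. Since $f''>0$, the map $u \mapsto \Psi(u;s)$ is strictly convex with unique minimizer $u^*_-(s)$ characterized by $f'(u^*_-(s)) = s$; because $f'$ is strictly increasing and smooth, $s \mapsto u^*_-(s)$ is continuous and strictly increasing, and $u^*_-(a_+) = u_+$. Two immediate observations drive everything: (i) $\Psi(u_+;s) = sq > 0$, so $u_+$ is never itself a root, and (ii) $s\mapsto \Psi(u;s)$ is affine for each fixed $u$, so the minimum value
\[
M(s) := \min_u \Psi(u;s) = \Psi(u^*_-(s);s)
\]
is concave in $s$.

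Next I would evaluate $M$ at the two relevant extremes. At $s = a_+$, the minimizer is $u_+$ itself, giving $M(a_+) = a_+ q > 0$. For large $s$, the envelope computation (or a direct check with the test point $u = u_+ + 2q$) yields $M'(s) = q - (u^*_-(s) - u_+) \to -\infty$, whence $M(s)\to -\infty$ as $s\to\infty$. Concavity together with these endpoint values produces a unique $s^{\mathrm{cj}} = s^{\mathrm{cj}}(u_+) > a_+$ with $M(s^{\mathrm{cj}}) = 0$, and $M(s) > 0$ for $s<s^{\mathrm{cj}}$, $M(s) < 0$ for $s>s^{\mathrm{cj}}$.

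Finally I would translate these sign conclusions into the trichotomy of the proposition. For $s<a_+$, $u^*_-(s) < u_+$, so $\Psi(\cdot;s)$ is strictly increasing on $[u_+,\infty)$ from $sq>0$, giving no root $u_->u_+$; for $a_+ \le s < s^{\mathrm{cj}}$, $M(s)>0$ shows that $\Psi(\cdot;s)$ is everywhere positive, so again no root. At $s=s^{\mathrm{cj}}$ the unique root is $u^{\mathrm{cj}}_- = u^*_-(s^{\mathrm{cj}})>u_+$. For $s>s^{\mathrm{cj}}$, strict convexity and $M(s)<0$ force exactly two roots; since $\Psi(u_+;s) = sq>0$ and $u_+ < u^*_-(s)$ (because $s>a_+$), both roots lie in $(u_+,\infty)$, one in $(u_+,u^*_-(s))$ (the weak detonation, with $f'(u_-)<s$) and one in $(u^*_-(s),\infty)$ (the strong detonation, with $f'(u_-)>s$).

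There is no real obstacle here; the main care required is the case split $s \lessgtr a_+$ so that one does not claim existence of roots above $u_+$ from a sign of $M$ alone when the minimizer lies below $u_+$. The convexity in $u$ together with the concavity in $s$ makes everything else essentially automatic.
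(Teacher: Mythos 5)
Your argument is correct, and it is exactly the analytic rendering of the ``CJ diagram'' that the paper invokes: the paper itself gives no proof of Proposition~\ref{prop:RHexist}, deferring to \cite{M_SIAMJAM81} and \cite{LRTZ_JDE07} and to Figure~\ref{fig:cj_diagram1}, so there is nothing internal to compare against. Your function $\Psi(u;s)=f(u)-f(u_\sp)-s(u-u_\sp)+sq$ is precisely the vertical gap between the graph of $f$ and the line of slope $s$ through the shifted base point $(u_\sp+q,f(u_\sp))$, and your trichotomy (line misses the graph, is tangent, or cuts it twice) is the content of that figure; the envelope/concavity observation for $M(s)$ is a clean way to get uniqueness and monotonicity of the sign change defining $s^{\mathrm{cj}}$, and your separate treatment of $s<a_\sp$ correctly avoids inferring roots above $u_\sp$ from the sign of $M$ alone. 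The identification of the smaller root as the weak detonation ($a_\sm<s$) and the larger as the strong one ($a_\sm>s$) via $f'(u^*_\sm(s))=s$ also matches \eqref{eq:strongdet}--\eqref{eq:weakdet}. The one implicit hypothesis you should surface is that $f'$ is unbounded above (equivalently, that $s$ lies in the range of $f'$): this is what guarantees both that the minimizer $u^*_\sm(s)$ exists with $u^*_\sm(s)\to\infty$ and that $\Psi(\cdot;s)\to+\infty$ as $u\to\infty$, which is needed for the count of \emph{exactly two} roots when $M(s)<0$. Under only \eqref{eq:f-u} a bounded $f'$ would leave a single root for large $s$; for the Burgers flux \eqref{eq:burgers} used throughout the paper this is automatic, so the caveat is cosmetic rather than a gap.
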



We assume, for the moment, that \eqref{eq:strongdet} holds. 
Linearizing \eqref{eq:tw} around the state $(u_\sm^\mathrm{s},z_\sm,y_\sm)=(u_\sm^\mathrm{s},0,0)$, we find the constant-coefficient system of ordinary differential equations 
\begin{equation}
\begin{bmatrix} \hat u \\ \hat z \\ \hat y \end{bmatrix}'=
\begin{bmatrix} 
B^{-1}(a_\sm-s) & B^{-1}(-sq) & -qB^{-1}D \\
0 & 0 & 1 \\ 
0 & kD^{-1}\phi(u_\sm) & -sD^{-1}
\end{bmatrix}\begin{bmatrix} \hat u \\ \hat z \\ \hat y \end{bmatrix}.
\label{eq:minus_lin_twode}
\end{equation}
Taking advantage of the simple block triangular structure of the coefficient matrix in \eqref{eq:minus_lin_twode}, one easily sees that it has two positive eigenvalues and one negative eigenvalue. Thus, there is a two-dimensional unstable manifold at $(u^\mathrm{s}_\sm,0, 0)$. Similarly, we note that the linearization of \eqref{eq:tw1}--\eqref{eq:tw3} about the rest point $(u_\sp,z_\sp,y_\sp)=(u_\sp,1,0)$ is 
\begin{equation}
\begin{bmatrix} \hat u \\ \hat z \\ \hat y \end{bmatrix}'=
\begin{bmatrix} 
B^{-1}(a_\sp-s) & B^{-1}(-sq) & -qB^{-1}D \\
0 & 0 & 1 \\ 
0 & 0 & -sD^{-1}
\end{bmatrix}\begin{bmatrix} \hat u \\ \hat z \\ \hat y \end{bmatrix}.
\label{eq:plus_lin_twode}
\end{equation}
Again, by virtue of the block-triangular structure, it is easy to see that there are two negative eigenvalues  and one zero eigenvalue. It is also immediate that the center manifold is a line of equilibria; no orbit may approach the rest point $(u_\sp,1,0)$ along the center manifold. Thus, for a heteroclinic connection to $(u_\sm^\mathrm{s},0,0)$, the important structure is the two-dimensional stable manifold at $(u_\sp,1, 0)$. Counting dimensions, we see that a strong-detonation connection corresponds to the structurally stable intersection of a pair of two-dimensional manifolds in $\mathbb{R}^3$. 

\br[Weak Detonations]\label{rem:weak}
Although it is not our principal interest here, we note that in the case that \eqref{eq:weakdet} holds, i.e., the wave is a weak detonation, we may repeat the above analysis to see that 
 at $(u^\mathrm{w}_\sm,z_\sm,y_\sm)=(u^\mathrm{w}_\sm,0,0)$, there is a one-dimensional unstable manifold while at $(u_\sp,z_\sp,y_\sp)=(u_\sp,1,0)$, there is a two-dimensional stable manifold and a line of equilibria (center manifold). Since no trajectory can approach the unburned state along the center manifold, a connection corresponding to a weak detonation corresponds to the intersection of the one-dimensional unstable manifold exiting the burned  end state with the two-dimensional stable manifold entering the unburned state in the the phase space $\mathbb{R}^3$. We observe that this dimensional count implies that weak detonations are isolated; weak detonations are a phenomenon of codimension one greater than than strong detonations.
We also note that while the strong detonation condition \eqref{eq:strongdet} implies that strong detonations are analogous to Lax shocks, weak detonations---i.e., waves that satisfy \eqref{eq:weakdet}---are undercompressive. For further discussion of this point, see \cite{LRTZ_JDE07}. 
\er

The following lemma is an immediate consequence of the above discussion;  exponential decay will be used below in the construction of the Evans function.   
\begin{lemma}[\cite{LRTZ_JDE07}]\label{lem:expdecay}
Traveling-wave profiles $(\hat u, \hat z, \hat y)$ corresponding to weak or strong
detonations satisfy for some $C,\theta>0$
\beq
\label{eq:expdecay}
\left|(\dif/\dif x)^k \Big((\hat u,\hat z,\hat y)(\xi)- (u,z,y)_\spm\Big)\right|\le Ce^{-\theta |\xi|},
\qquad
\xi\gtrless 0, \quad 0\le k\le 3\,.
\eeq
\end{lemma}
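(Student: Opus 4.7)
The plan is to view $\mathbf{w}(\xi):=(\hat u,\hat z,\hat y)^\tr(\xi)$ as a solution of the autonomous ODE $\mathbf{w}'=F(\mathbf{w})$ whose right-hand side is given by \eqref{eq:tw1}--\eqref{eq:tw3}, noting that $\mathbf{w}_\spm:=(u,z,y)_\spm$ are equilibria and that the linearizations about them are precisely \eqref{eq:minus_lin_twode}--\eqref{eq:plus_lin_twode}. Once the $k=0$ exponential decay of $\mathbf{w}-\mathbf{w}_\spm$ is established at each endpoint, the bounds for $k=1,2,3$ follow automatically: expanding $\mathbf{w}'=F(\mathbf{w})=DF(\mathbf{w}_\spm)(\mathbf{w}-\mathbf{w}_\spm)+O(|\mathbf{w}-\mathbf{w}_\spm|^2)$ and differentiating the ODE repeatedly (using smoothness of $f$ and of $\phi$ near the endpoints; near $u_\sp$ this is trivial since $\phi\equiv 0$ there by \eqref{eq:u_plus} and \eqref{eq:phi}), one inductively bounds each derivative by a constant multiple of $e^{-\theta|\xi|}$.

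At $\xi\to-\infty$, the linearization \eqref{eq:minus_lin_twode} is \emph{hyperbolic} in both the strong- and weak-detonation cases: the nonzero eigenvalues of the $(\hat z,\hat y)$ block have opposite signs because this $2\times 2$ block has trace $-s/D<0$ and determinant $-kD^{-1}\phi(u_\sm)<0$, while the remaining eigenvalue $B^{-1}(a_\sm-s)$ is nonzero by \eqref{eq:strongdet} or \eqref{eq:weakdet}. The profile, being a bounded orbit leaving $\mathbf{w}_\sm$, lies on the unstable manifold of this equilibrium and hence, by the classical stable/unstable manifold theorem, converges to $\mathbf{w}_\sm$ at the exponential rate set by the smallest-in-modulus unstable eigenvalue.

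At $\xi\to+\infty$, the linearization \eqref{eq:plus_lin_twode} has a simple zero eigenvalue, so strict hyperbolicity fails. The geometry disposes of this degeneracy: $u_\sp<u_\mathrm{ig}$ together with \eqref{eq:phi} gives $\phi\equiv 0$ on a full neighborhood of $u_\sp$, so that for $\xi$ large the triangular system \eqref{eq:tw1}--\eqref{eq:tw3} simplifies to the decoupled chain
\begin{equation*}
\hat y'=-sD^{-1}\hat y,\qquad \hat z'=\hat y,\qquad \hat u'=B^{-1}\bigl(f(\hat u)-f(u_\sm)-s(\hat u-u_\sm)-q(s\hat z+D\hat y)\bigr).
\end{equation*}
The first equation yields $\hat y(\xi)=O(e^{-sD^{-1}\xi})$; integrating the second from $+\infty$ using $\hat z(+\infty)=1$ yields $\hat z(\xi)-1=O(e^{-sD^{-1}\xi})$. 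The third is then a scalar ODE for $\hat u-u_\sp$ whose linearization about $u_\sp$ has eigenvalue $B^{-1}(a_\sp-s)<0$ (by either \eqref{eq:strongdet} or \eqref{eq:weakdet}) and is driven by an exponentially small forcing; variation of parameters, or a Gr\"onwall argument, then gives $|\hat u(\xi)-u_\sp|\le Ce^{-\theta\xi}$ for any $\theta$ strictly less than $\min\{s/D,\,(s-a_\sp)/B\}$.

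The only genuinely nonroutine point is the degeneracy at the $+\infty$ endpoint, which the decoupling just described dispatches cleanly. Equivalently, one can invoke a center--stable manifold theorem while observing that the zero eigenvector points along the line $\{(u_\sp,z,0)\}$ of equilibria, so that the center manifold carries no nontrivial dynamics and convergence along the stable manifold of $(u_\sp,1,0)$ is genuinely exponential rather than merely algebraic.
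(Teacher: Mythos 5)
Your proposal is correct and follows essentially the same route as the paper: the paper presents the lemma as an immediate consequence of its preceding discussion of the linearizations \eqref{eq:minus_lin_twode}--\eqref{eq:plus_lin_twode}, namely hyperbolicity at the burned state and, at the unburned state, the observation that the center direction is a line of equilibria so the orbit must enter along the stable manifold. Your explicit decoupling near $+\infty$ (using $\phi\equiv 0$ there) and the bootstrap for derivatives $1\le k\le 3$ merely fill in details the paper leaves implicit.
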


\subsubsection{End states and parametrization}\label{ssec:end}
Suppose $( \hat u,\hat z)$ is a traveling-wave profile of \eqref{eq:mm} satisfying \eqref{eq:strongdet}. From this point forward, for concreteness, we restrict our attention to the case of the Burgers flux $f(u)=u^2/2$ that is the basis for our numerical computations. Evidently, $(\hat u,\hat z)$ is a steady solution of 
\begin{subequations}\label{eq:mmm}
\begin{align}
u_t-su_x+(u^2/2)_x& =Bu_{xx}+qk\phi(u)z\,,\label{eq:mmm1} \\
z_t -sz_x& = Dz_{xx}-k\phi(u)z\,.\label{eq:mmm2}
\end{align}
\end{subequations}
As a preliminary step, we rescale space and time via 
\beq\label{eq:scale}
\tilde{x} = \dfrac{s}{B} x\,,\quad\tilde{t} = \dfrac{s^2}{B}t\,;
\eeq
and we rescale the dependent variables so that 
\beq\label{eq:scale2}
s\tilde u(\tilde t,\tilde x)=u(t,x)\quad\text{and}\quad\tilde z(\tilde t,\tilde x) = z(t,x)\,.
\eeq
Then \eqref{eq:mmm} takes the form
\begin{align*}
\tilde{u}_{\tilde{t}} - \tilde{u}_{\tilde{x}} + \left(\tilde{u}^2/2\right)_{\tilde{x}} &= \tilde{u}_{\tilde{x} \tilde{x}} + \tilde{q} \tilde{k} \tilde{\phi}(\tilde{u}) \tilde{z}\,,\\
\tilde{z}_{\tilde{t}} - \tilde{z}_{\tilde{x}} &= \tilde{D} \tilde{z}_{\tilde{x} \tilde{x}} - \tilde{k} \tilde{\phi}(\tilde{u}) \tilde{z}\,,
\end{align*}
where $\tilde{k} = kB/s^2$, $\tilde{\phi}(\tilde{u}) = \phi\left(\tilde{u}/s\right)$, $\tilde{q} = q/s$, and $\tilde{D} = D/B$.  Suppressing the tildes, we arrive at the system
\begin{subequations}
\label{eq:majda3}
\begin{align}
u_t - u_x + \left(u^2/2\right)_x &= u_{x x} + q k \phi(u) z\,,\label{majda3:a}\\
z_t - z_x &= D z_{x x} - k \phi(u) z\,. \label{majda3:b}
\end{align}
\end{subequations}
This calculation shows that we may take the wave speed to satisfy $s=1$ and the viscosity coefficient to be $B=1$.
Thus, \eqref{eq:rh} reduces immediately to 
\beq\label{eq:uminusq}
\frac{1}{2}(u_\sp^2-u_\sm^2)=u_\sp-u_\sm+q=0\,,
\eeq 
from which we see that the burned state $u_\sm$ is given in terms of $q$ and $u_\sp$ (strong detonation) by 
\beq\label{eq:umq}
u_\sm=1+\sqrt{(1-u_\sp)^2-2q}\,.
\eeq
Thus, the physical range for range for heat release $q$ and unburned state $u_\sp$ is the set  
\beq\label{eq:uq}
\mathcal{U}:=\left\{(u_\sp,q)\in\RR^2\left| \; u_\sp\geq0,\,0\leq q \leq\frac{1}{2}(1-u_\sp)^2\right.\right\}\,,
\eeq
which corresponds to $u_\sm\in[1,2]$; see Figure \ref{fig:uq}.
The restriction of $u_\sp$ to positive values is explained in Remark \ref{rem:flux}.
Thus, the adjustable parameters are $u_\sp$ (unburned state), $q$ (heat release), $\mathcal{E}_A$ (activation energy---appearing in the definition of $\phi$), $k$ (reaction rate), and $D$ (diffusion). 
\begin{figure}[ht] 
   \centering
   \includegraphics[width=7cm]{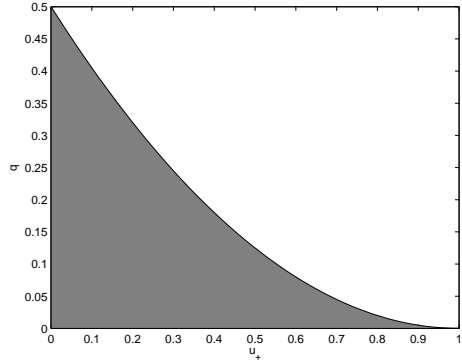} 
   \caption{The physical region $\mathcal{U}$ in $(u_\sp,q)$ space given in equation \eqref{eq:uq} is shaded. Observe that the curved boundary consists of the values of $(u_\sp,q)$ for which the radicand in \eqref{eq:umq} vanishes. Thus, this boundary corresponds to the Chapman--Jouguet state.}
   \label{fig:uq}
\end{figure}

\br
As noted in Figure \ref{fig:uq}, the upper boundary of the physical region $\mathcal{U}$ corresponds to the distinguished CJ state. We denote the corresponding maximum values of $q$ by $q_\mathrm{max}$ (the value evidently depending on $u_\sp$).
These profiles occupy a special place in the theory. For example, we recall that in the inviscid ZND case,  CJ profiles feature slower decay/longer tails.  Specifically, the $z$ coordinate, as always, decays exponentially at the usual rate, but $u$, being related to $z$ by  $u=c + \sqrt{z}$  in this case, decays at half of the rate and therefore generates a tail of twice the length. These long tails can be cause problems for reliable Evans-function computation, and, as described in Appendix \ref{sec:parameter}, some choices of parameters with $q$ near $q_\mathrm{max}$ lead to slowly decaying profiles that are outside of the range of our computation. 
\er
\begin{figure}[p]
\begin{center}
$\begin{array}{cc}
\includegraphics[width=6.5cm]{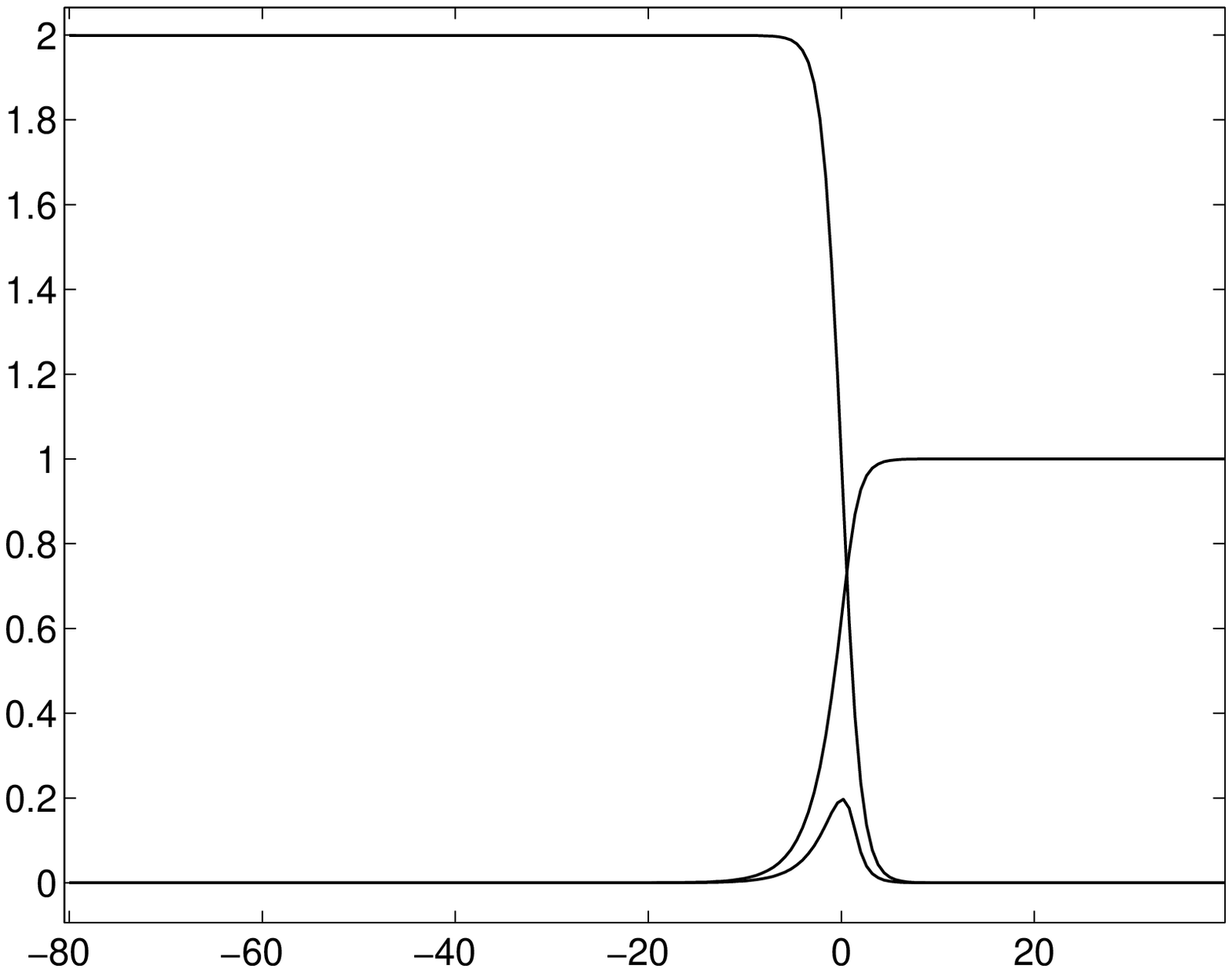} & \includegraphics[width=6.5cm]{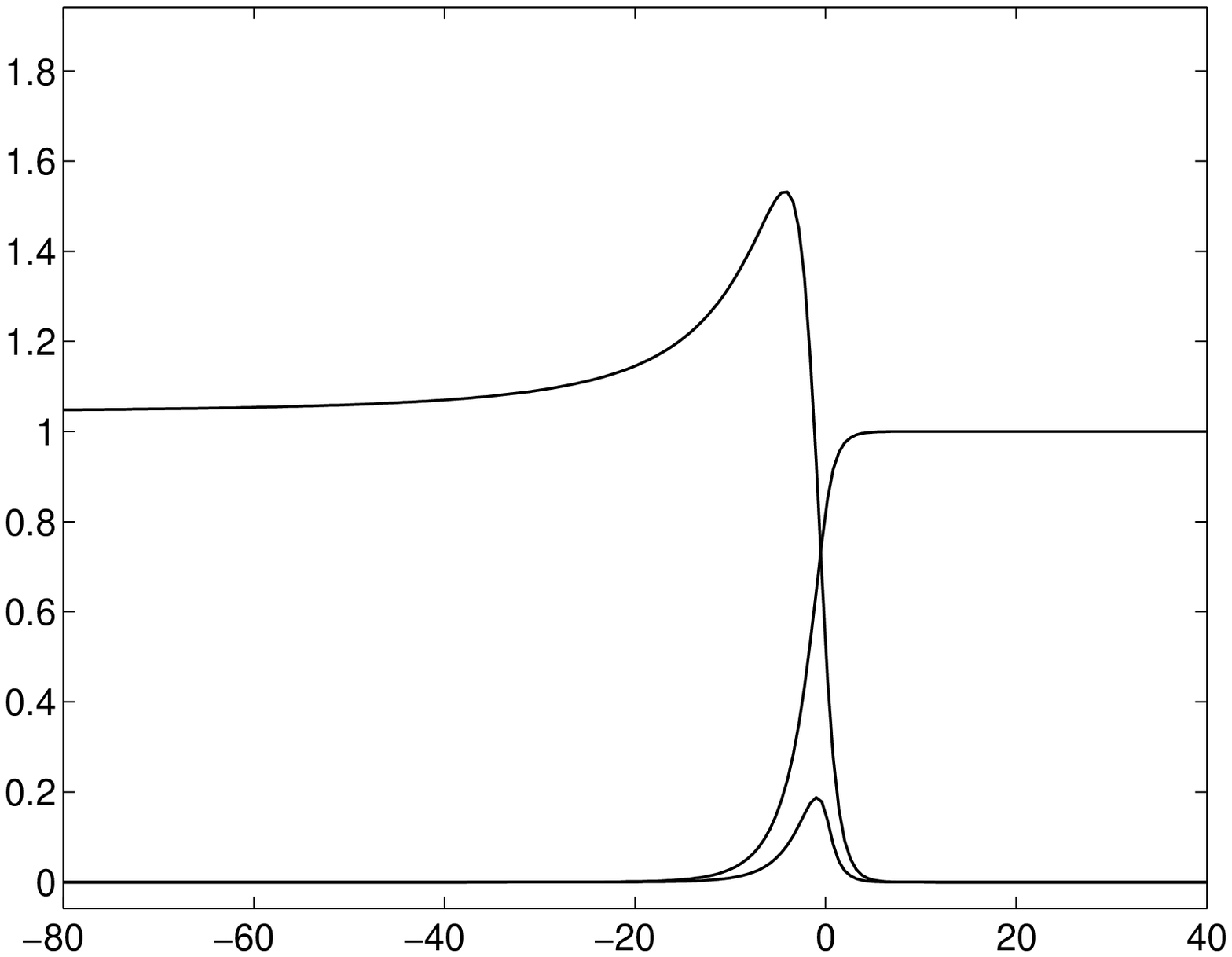} \\
\mbox{\bf (a)} & \mbox{\bf (b)} \\
\includegraphics[width=6.5cm]{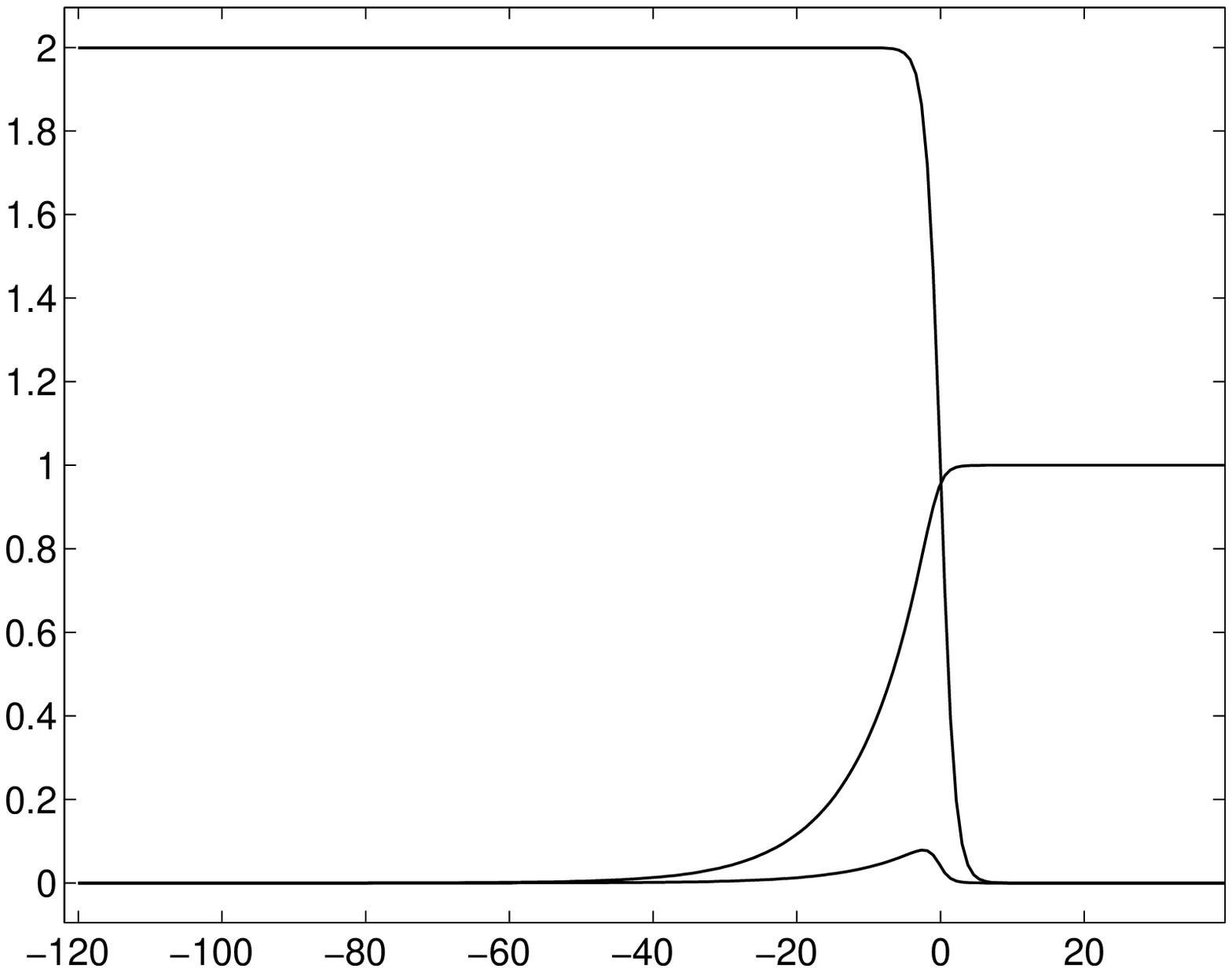} & \includegraphics[width=6.5cm]{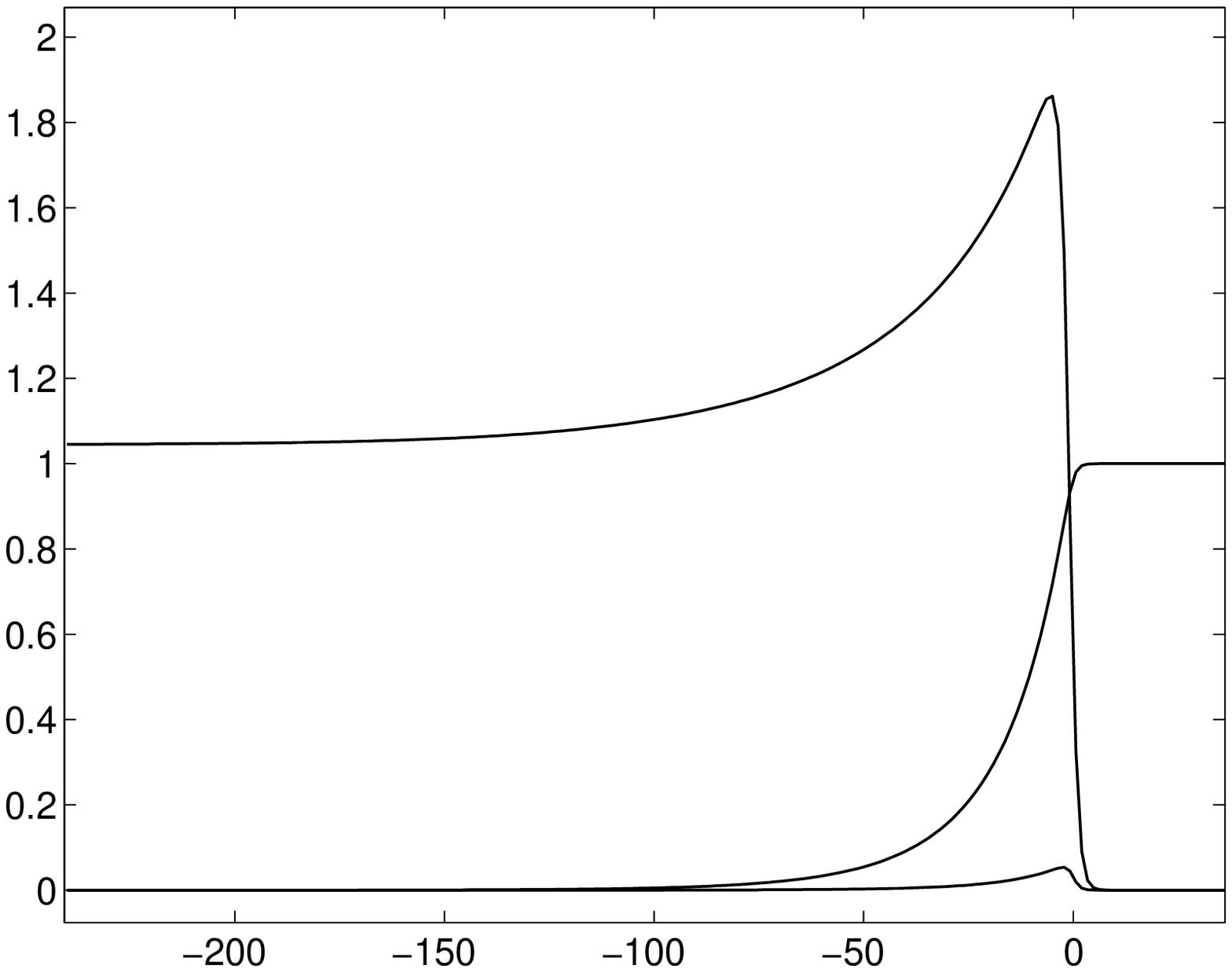} \\
\mbox{\bf (c)} & \mbox{\bf (d)} \\
\includegraphics[width=6.5cm]{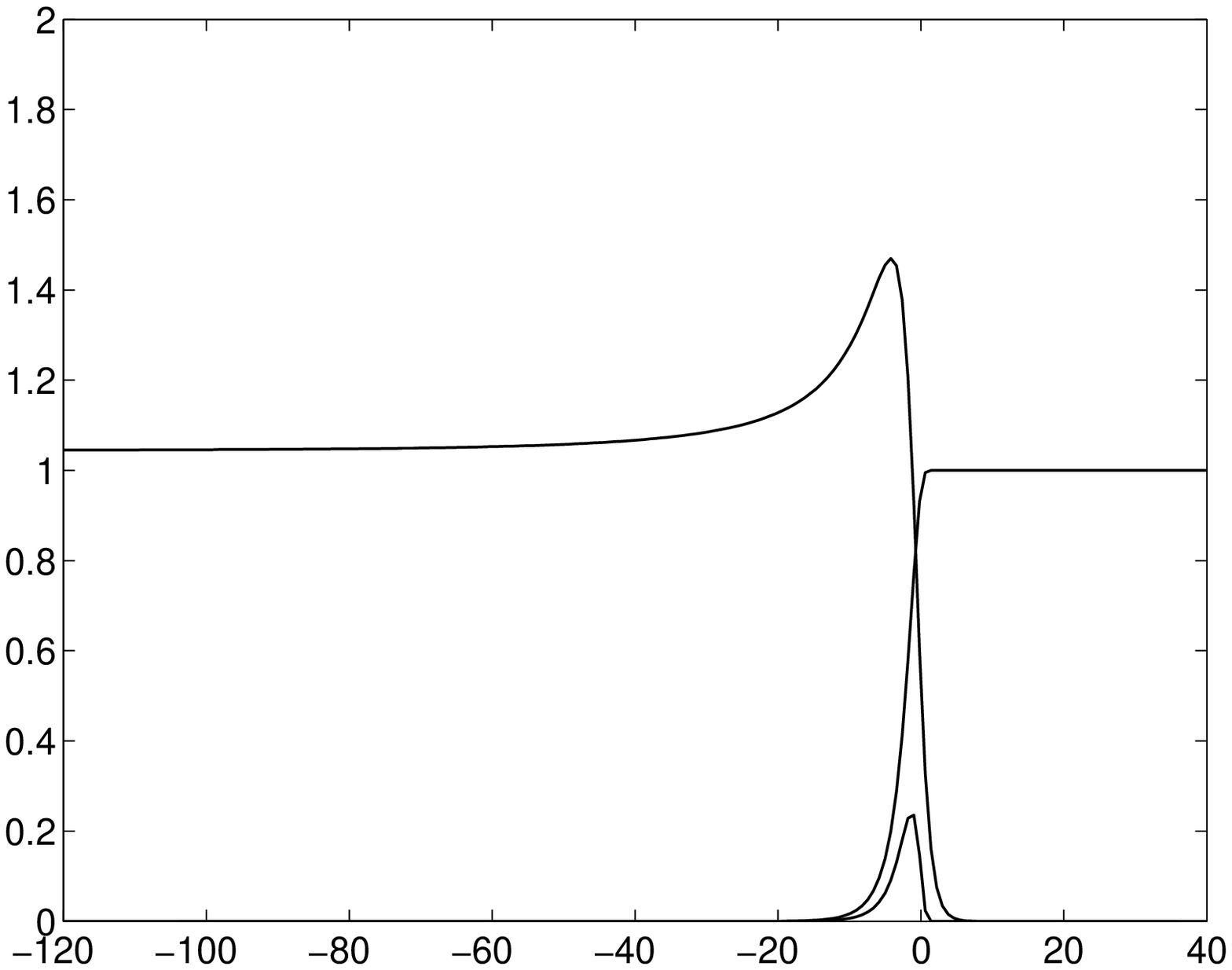} & \includegraphics[width=6.5cm]{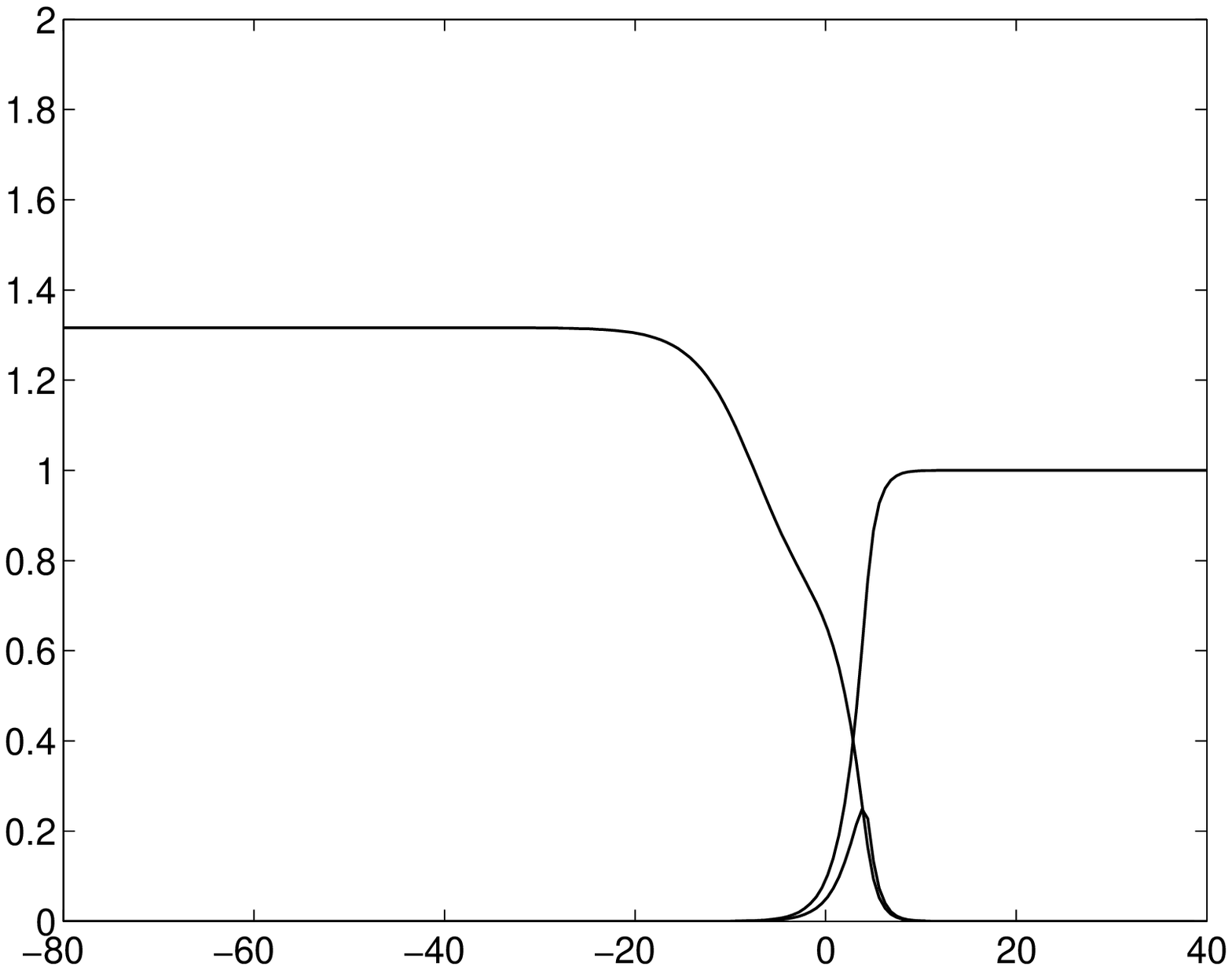} \\
\mbox{\bf (e)} & \mbox{\bf (f)}
\end{array}$
\end{center}
\caption{Graphs of the profile solutions $\hat{u}$, $\hat{y}$, and $\hat{z}$ against $x$ for different parameters.  We consider the intermediate parameter regime ($D=1$, $k=1$, $\mathcal{E}_A=1$), for \textbf{(a)} small $q=0.001$ and \textbf{(b)} large $q=0.499$.  We also consider different limiting cases for large $q$ including \textbf{(c)} large $\mathcal{E}_A$ ($D=1$, $k=1$, $\mathcal{E}_A=4$ and $q=0.499$), \textbf{(d)} small $k$ ($D=1$, $k=0.125$, $\mathcal{E}_A=1$ and $q=0.499$), \textbf{(e)} small $D$ ($D=0.125$, $k=1$, $\mathcal{E}_A=1$ and $q=0.499$), and \textbf{(f)} small $\mathcal{E}_A$ ($D=1$, $k=1$, $\mathcal{E}_A=0.125$ and $q=0.45$).  We distinguish the three curves by noting that $\hat{u}$ converges to $u_\sm>1$ for large negative values of $x$, $\hat{z}$ converges to unity for large positive values of $x$, and $\hat{y}$ converges to zero at both ends.}
\label{fig:profiles}
\end{figure}

\subsubsection{Profile properties}
\label{sssec:profileprops}
We conclude this section by displaying in Figure \ref{fig:profiles} some numerically computed solutions of \eqref{eq:tw} that illustrate the variety of forms taken by strong-detonation-wave solutions of the Majda model.  In Figure \ref{fig:profiles}\textbf{(a)}--\textbf{(b)}, we look at the intermediate parameter regime for small and large values of $q$.  We note that the small values of $q$ reproduce roughly the same profiles regardless of the other parameters.  However, for large values of $q$ the other parameters provide some variation.  In Figure \ref{fig:profiles}\textbf{(c)}--\textbf{(f)}, we look at several limiting cases, including large and small values of $\mathcal{E}_A$, and small values of $k$ and $D$.

\section{Spectral stability}\label{sec:spectral}
\subsection{Linearized equations \& eigenvalue problem}\label{ssec:eval}
Turning now to our stability analysis, we see that the linear equations obtained by linearizing \eqref{eq:majda3} about $(\hat u, \hat z)$ are 
\begin{subequations}\label{eq:mmlin}
\begin{align}
&u_t-q(k\phi'(\hat u)u\hat z+k\phi(\hat u)z)+((\hat u -1) u)_x=u_{xx}\,,\label{eq:mmlin1}\\
&z_t-z_x=-k\phi'(\hat u)u\hat z-k\phi(\hat u)z+Dz_{xx}\,.\label{eq:mmlin2}
\end{align}
\end{subequations}
In \eqref{eq:mmlin}, $u$ and $z$ now denote perturbations. The eigenvalue equations corresponding to this linear system are thus 
\begin{subequations}\label{eq:eval}
\begin{align}
&u''=\lambda u-q(k\phi'(\hat u)u\hat z+k\phi(\hat u)z)+((\hat u-1) u)'\,,\label{eq:eval1} \\
&z''=D^{-1}\big(\lambda z-z'+k\phi'(\hat u)u\hat z+k\phi(\hat u)z\big),\,.\label{eq:eval2}
\end{align}
\end{subequations}
In \eqref{eq:eval} and hereafter $'=\dif/\dif x$.
Alternatively, upon substituting $Dz''-\lambda z+z'=k\phi'(\hat u)u\hat z + k\phi(\hat u)z$ from \eqref{eq:eval2} into \eqref{eq:eval1}, we may rewrite \eqref{eq:eval1} as
\begin{equation}
u''=\lambda(u+qz)-qz'-qDz''+(\alpha u)'\,.\label{eq:alteval1}
\end{equation}
The first step towards constructing the Evans function is to write \eqref{eq:eval} as a first-order system. To do so, we define
$W:=(u,z,u',z')^\tr$, so that the eigenvalue equation becomes
\beq
W'=\mathbb{A}(x;\lambda)W\,,
\eeq
where
\beq
\label{eq:Amatrix}
\mathbb{A}(x;\lambda) =
\begin{bmatrix}
0 & 0 & 1 & 0  \\
0 & 0 & 0 & 1 \\
\lambda + \hat u_x-qk\phi'(\hat u)\hat z & -qk\phi(\hat u) & \hat u-1 & 0 \\
D^{-1}k\phi'(\hat u) \hat z & D^{-1}\lambda + D^{-1}k\phi(\hat u) & 0 & -D^{-1}
\end{bmatrix}\,.
\eeq
For strong detonations, as in the shock case,
it is advantageous to work with the integrated equations.  
This has the effect of removing the translational zero eigenvalue.\footnote{For weak detonations, 
there is no advantage.}
We define $w':=u+qz$ so that \eqref{eq:alteval1} becomes 
\begin{equation}
u''=\lambda w'-qz' -qDz''+((\hat u-1) u)'.\label{eq:3altdn0eval1}
\end{equation}
which can be integrated so that the eigenvalue equation becomes
\begin{subequations}\label{eq:intevalsystem}
\begin{align}
u'&=\lambda w-qz-qDz'+(\hat u-1) u\,, \\
w'&=u+qz\,,\\
z''&=D^{-1}\big(\lambda z-z'+k\phi'(\hat u)u\hat z+k\phi(\hat u)z\big)\,.
\end{align}
\end{subequations}
In matrix form with unknown $X:=(u,w,z,z')^\tr$, \eqref{eq:intevalsystem} takes the form 
\beq
\label{eq:int_eval_ode}
X'=\mathbb{B}(x;\lambda) X
\eeq
with
\begin{equation}\label{eq:bmatrix}
\mathbb{B}(x;\lambda):=\begin{bmatrix}
\hat u-1 & \lambda & -q & -qD \\
1 & 0 & q & 0 \\
0 & 0 &0 & 1\\
D^{-1}k\phi'(\hat u)\hat z & 0 & D^{-1}(\lambda+k\phi(\hat u)) & -D^{-1}
\end{bmatrix}.
\end{equation}
In either case, we have cast the eigenvalue problem as a variable-coefficient linear system of first-order ODEs with a coefficient matrix which depends on the spectral parameter $\lambda$. Notably, the coefficient matrices decay exponentially fast, by virtue of Lemma~\ref{lem:expdecay}, to constant (with respect to $x$) matrices. We denote these two limiting systems by 
\beq\label{eq:limit}
W'=\mathbb{A}_\spm(\lambda)W,\;\;\text{and}\;\;
X'=\mathbb{B}_\spm(\lambda)X\,.
\eeq
It is precisely in this setting that the Evans function can be constructed. Because the construction of the Evans function for the Majda model has been described in detail elsewhere \cite{LRTZ_JDE07}, we shall omit virtually all of the details of the construction and content ourselves with utilizing its fundamental properties. 
For an introduction to the Evans function in the setting
of conservation and balance laws, see 
\cites{GZ_CPAM98,PW_PTRSL92} or the survey articles \cites{Z_IMA,Z_hand,Z_num};
for a general introduction, see,
e.g., \cite{AGJ_JRAM90} or the survey
article \cite{S_HDS02}.

\subsection{High-frequency bounds}\label{ssec:hfb}
We note that the integrated equations \eqref{eq:intevalsystem} can be written as 
\begin{subequations}
\label{eq:eval5}
\begin{align}
&\lambda w = (1-\hat{u}) w' + q\hat{u} z + q(D-1)z' + w''\,, \label{eigenvalue5:a}\\
&\lambda z + k (\phi(\hat{u}) - q \phi'(\hat{u})\hat{z}) z = z' + k \phi'(\hat{u})\hat{z} w' + D z''\,. \label{eigenvalue5:b}
\end{align}
\end{subequations}
Using \eqref{eq:eval5}, we show by an energy estimate that any unstable eigenvalue of the integrated eigenvalue equations must lie in a bounded region of the unstable half plane.  

\begin{prop}[High-frequency bounds]\label{prop:hfb}
Any eigenvalue $\lam$ of \eqref{eq:eval5} with nonnegative real part satisfies
\beq\label{eq:bound}
\re\lambda+|\im\lambda|\leq \max\left\{ 4, \frac{1}{4 D} + \left( \frac{1}{4} + \frac{1}{2}|D-1|^2\right) k L + k M \right\}
\eeq
where 
\beq\label{eq:mdef}
L:= \sup_{x\in\RR}{\phi'(\hat u(x))\hat z} \quad\text{and}\quad M:=\sup_{x\in\RR} \left((1+q) \phi'(\hat{u}) \hat{z}-\phi(\hat{u})\right).
\eeq
\end{prop}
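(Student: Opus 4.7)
The plan is to establish the bound via two $L^2$ energy identities derived from the integrated eigenvalue equations \eqref{eq:eval5}, one for each component. To produce the combination $\re\lambda + |\im\lambda|$ (rather than $|\lambda|$) on the left-hand side, the standard trick is to test each equation against $(1 - i\sigma)$ times the conjugate of the principal unknown, where $\sigma := \mathrm{sgn}(\im\lambda) \in \{-1,0,1\}$; then $\re[\lambda(1-i\sigma)] = \re\lambda + |\im\lambda|$. Because eigenfunctions of \eqref{eq:eval5} are built from the decaying modes of the limiting matrices $\mathbb{B}_\spm(\lambda)$ for $\re\lambda \ge 0$, they decay exponentially at $\pm\infty$, so no boundary terms appear in the ensuing integrations by parts.

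Concretely, I pair \eqref{eigenvalue5:a} with $(1-i\sigma)\bar w$ and \eqref{eigenvalue5:b} with $(1-i\sigma)\bar z$, integrate over $\RR$, integrate by parts, and take real parts. The second-derivative terms yield the favorable quantities $\|w'\|^2$ and $D\|z'\|^2$ on the left; the convective terms $\re\int (1-\hat u) w'\bar w$ and $\re\int z'\bar z$ reduce either to $\tfrac12\int \hat u'|w|^2$ or to zero; and $-k\int \phi(\hat u)|z|^2$ moves to the LHS with the good sign. The remaining right-hand-side terms are the non-derivative cross coupling $q\int \hat u z\bar w$, the derivative cross couplings $q(D-1)\int z'\bar w$ and $k\int \phi'(\hat u)\hat z\, w'\bar z$, the $\sigma$-weighted imaginary pieces of the convective terms, and the indefinite reaction contribution $qk\int \phi'(\hat u)\hat z|z|^2$. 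Each cross term is controlled by Young's inequality, using the uniform bounds $|1-\hat u|\le 1$ and $|\hat u|\le 2$ (which follow from $u_\sp \geq 0$ and $u_\sm\le 2$) together with the sup norms $L$ and $M$. The weights are chosen so that (i) all $\|w'\|^2$ and $\|z'\|^2$ Young contributions are absorbed into the definite gradient terms on the LHS, and (ii) the indefinite $qk\int\phi'(\hat u)\hat z|z|^2$ combines with the $kL\|z\|^2$ piece extracted from the $w'\bar z$ coupling and with $-k\int\phi(\hat u)|z|^2$ to yield the factor $kM$, via the structural identity $(1+q)\phi'(\hat u)\hat z - \phi(\hat u)$.

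After collecting terms, one obtains two inequalities that bound $(\re\lambda + |\im\lambda|)\|w\|^2$ and $(\re\lambda + |\im\lambda|)\|z\|^2$ by constants times $\|w\|^2$ and $\|z\|^2$ respectively; the constant $4$ arises from the $w$-equation contribution (which is essentially Burgers-type and independent of the reaction coefficients), while the expression $\tfrac{1}{4D} + (\tfrac14 + \tfrac12|D-1|^2)kL + kM$ arises from the $z$-equation contribution, with the factor $|D-1|^2$ tracing back to the coupling $q(D-1)\int z'\bar w$. Adding the two estimates and dividing by $\|w\|^2 + \|z\|^2$ produces \eqref{eq:bound} as the maximum of the two coefficients. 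The main technical obstacle is the simultaneous tuning of the Young weights: one must balance three gradient cross terms, three non-derivative cross terms, and the indefinite $|z|^2$ term so that the definite quantities $\|w'\|^2$ and $D\|z'\|^2$ suffice to absorb everything, and so that the final constants on the right are sharp enough to yield the clean structural form of \eqref{eq:bound} uniformly across parameters without ever invoking $\sup|\hat u'|$ or other profile-dependent norms.
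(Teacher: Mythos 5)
Your overall strategy is the paper's own: an $L^2$ energy estimate on the integrated system \eqref{eq:eval5}, extraction of $\re\lambda+|\im\lambda|$ by combining real and imaginary parts (your $(1-\mi\sigma)$ multiplier is equivalent to the paper's step of taking real and imaginary parts separately and adding, and produces the same $\sqrt{2}$ factors), followed by repeated use of Young's inequality, absorption of the gradient cross terms into $\int|w'|^2$ and $D\int|z'|^2$, and recombination of the reaction terms into $kM$ via $(1+q)\phi'(\hat u)\hat z-\phi(\hat u)$, exactly as you describe. One small internal inconsistency: you cannot both reduce $\re\int(1-\hat u)w'\bar w$ to $\tfrac12\int\hat u'|w|^2$ and claim never to invoke $\sup|\hat u'|$; the paper instead bounds this term (and its imaginary part, which you must keep anyway) directly by $\|1-\hat u\|_\infty\int|w'|\,|w|$, which is what produces the constant $4$.

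The more substantive gap is in how the two component estimates are combined. They are coupled --- the $w$-inequality contains $\int|z|\,|w|$ and $\int|z|\,|w'|$, and the $z$-inequality contains $k\int|\phi'(\hat u)\hat z|\,|w'|\,|z|$ --- so neither closes by itself, and adding them with equal weights (``dividing by $\|w\|^2+\|z\|^2$'') does not yield \eqref{eq:bound}. The coupling term in the $z$-estimate must, after Young, donate at most an $O(1)\cdot\int|w'|^2$ piece in order to be absorbed by the single gradient term available from the $w$-estimate; with equal weights this forces the Young parameter to be of size $(kL)^{-1}$ and leaves a coefficient of order $(kL)^2$ on $\int|z|^2$, not the advertised $O(kL)$. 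The paper's resolution is to multiply the entire $z$-inequality by the weight $\Theta=(kL)^{-1}$ before adding, so that the donated gradient piece is $O(1)\int|w'|^2$ while the $|z|^2$ remainder, measured against the weighted quantity $\int(|w|^2+\Theta|z|^2)$, stays linear in $kL$; the bound \eqref{eq:bound} is then the maximum of the two resulting coefficients relative to that weighted norm. Your ``simultaneous tuning of the Young weights'' is the right instinct, but the missing mechanism is this relative weighting of the two component estimates, not merely the choice of the $\varepsilon$'s within each.
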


\begin{proof}
We multiply \eqref{eigenvalue5:a} by $\bar{w}$ and \eqref{eigenvalue5:b} by $\bar{z}$ and integrate (we integrate the $w''\bar w$, $z''\bar{z}$ and $z'\bar{w}$ terms by parts) to give
\begin{subequations}
\label{energy}
\begin{align}
&\lambda \intr |w|^2 + \intr |w'|^2 = \intr (1-\hat{u}) w'\bar{w} + q \intr \hat{u} z\bar{w} - q(D-1)\intr z\bar{w}', \label{energy:a}\\
&\lambda \intr |z|^2 + D \intr |z'|^2 + k \intr (\phi(\hat{u})- q \phi'(\hat{u}) \hat{z}) |z|^2   = \intr z'\bar{z} - k \intr\phi'(\hat{u})\hat{z} w'\bar{z}. \label{energy:b}
\end{align}
\end{subequations}
Taking the real part of \eqref{energy}, we find
\begin{subequations}
\label{eq:real}
\begin{align}
&\re\lambda \intr |w|^2 + \intr |w'|^2 =\re \left(\intr (1-\hat{u}) w'\bar{w} + q \intr \hat{u} z\bar{w} - q(D-1)\intr z\bar{w}' \right), \label{real:a}\\
&\re\lambda \intr |z|^2 + D \intr |z'|^2 + k \intr (\phi(\hat{u})- q \phi'(\hat{u}) \hat{z}) |z|^2 = -\re \left(k \intr\phi'(\hat{u})\hat{z} w'\bar{z}\right). \label{real:b}
\end{align}
\end{subequations}
Similarly, taking the imaginary part of \eqref{energy}, we observe
\begin{subequations}
\label{eq:imag}
\begin{align}
&\im\lambda \intr |w|^2 = \im\left( \intr (1-\hat{u}) w'\bar{w} + q \intr\hat{u} z\bar{w} - q(D-1) \intr z\bar{w}'\right)\,, \label{imag:a}\\
&\im\lambda \intr |z|^2 = \im\left(\intr z'\bar{z} - k \intr \phi'(\hat{u})\hat{z} w'\bar{z}\right) = 0\,. \label{imag:b}
\end{align}
\end{subequations}
Combining \eqref{eq:real} and \eqref{eq:imag}, we see that
\begin{equation}
\begin{split}
\label{eq:ri-a}
&\big(\re\lambda + |\im\lambda|\big) \intr |w|^2 + \intr |w'|^2 \leq\sqrt{2} q \intr \hat{u} |z||w|\\
&\qquad + \sqrt{2} q |D-1| \intr |z| |w'| + \sqrt{2} \intr |1-\hat{u}| |w'| |w|,
\end{split}
\end{equation}
and
\begin{equation}
\begin{split}
\label{eq:ri-b}
&\big(\re\lambda + |\im\lambda|\big) \intr |z|^2 + k \intr (\phi(\hat{u})- q \phi'(\hat{u}) \hat{z}) |z|^2 \\
&\qquad + D \intr |z'|^2 \leq \intr |z'| |z| +  \sqrt{2} k \intr |\phi'(\hat{u})\hat{z}| |w'| |z|\,.
\end{split}
\end{equation}
Using Young's inequality (several times) together with the assumption that $\re\lambda\geq 0$, we find that inequalities \eqref{eq:ri-a} and \eqref{eq:ri-b} imply
\begin{equation}
\begin{split}
\label{eq:young-a}
&\big(\re\lambda + |\im\lambda|\big) \intr |w|^2 + \intr |w'|^2 \leq \sqrt{2} q \|\hat{u}\|_\infty \intr \left(\varepsilon_1 |z|^2 + \frac{|w|^2}{4\varepsilon_1}\right) \\
&\qquad + \sqrt{2} q |D-1| \intr \left(\varepsilon_2 |z|^2 + \frac{|w'|^2}{4\varepsilon_2}\right) + \norm{1 - \hat{u}}{\infty} \intr \left(\varepsilon_3 |w'|^2 +\frac{|w|^2}{4\varepsilon_3}\right)
\end{split}
\end{equation}
and
\begin{equation}
\label{eq:young-b}
\begin{split}
&\big(\re\lambda + |\im\lambda|\big) \intr |z|^2 + k \intr (\phi(\hat{u})- q \phi'(\hat{u}) \hat{z}) |z|^2 + D \intr |z'|^2 \\
&\qquad \leq \intr \left(\varepsilon_4 |z'|^2 + \frac{|z|^2}{4\varepsilon_4}\right) + \sqrt{2} L \intr \left(\varepsilon_5 |w'|^2 + \frac{|z|^2}{4\varepsilon_5}\right)\,.
\end{split}
\end{equation}
We multiply \eqref{eq:young-b} by  $\Theta>0$ and add the result to \eqref{eq:young-a}. The result is 
\begin{equation}
\label{eq:bigdaddy}
\begin{split}
&\big(\re\lambda + |\im\lambda|\big) \left( \intr |w|^2 + \Theta |z|^2 \right) + k \intr \Phi(x) |z|^2 + \intr |w'|^2 + \Theta D \intr |z'|^2\\
&\qquad \leq \intr R_1(x)\Theta |z|^2 + \varepsilon_4 \Theta  \intr |z'|^2 + R_2\intr |w'|^2 +  R_3 \intr |w|^2 \,.
\end{split}
\end{equation}
where 
\begin{align*}
\Phi(x)&=(\phi(\hat{u})- q \phi'(\hat{u})\hat{z}) \,,\\
R_1(x)&= \frac{\sqrt{2} \varepsilon_1 q \|\hat{u}\|_\infty}{\Theta}  + \frac{\sqrt{2} \varepsilon_2 q |D-1|}{\Theta} + \frac{1}{4\varepsilon_4} + \frac{\sqrt{2} k \phi'(\hat{u})\hat{z}}{4\varepsilon_5}\,,\\
R_2&=\sqrt{2}\left(\frac{q |D-1|}{4\varepsilon_2} + \varepsilon_3 \|1-\hat{u}\|_\infty + \varepsilon_5 \Theta k L \right)\,,\\
\intertext{and}
R_3&=\sqrt{2}\left(\frac{q \norm{\hat{u}}{\infty}}{4\varepsilon_1} + \frac{\norm{1-\hat{u}}{\infty}}{4\varepsilon_3}\right)\,.
\end{align*}
%
%
Finally, to simplify \eqref{eq:bigdaddy}, we choose
\begin{align*}
\varepsilon_1 &= \frac{\sqrt{2}}{8} &
\varepsilon_2 &= \sqrt{2} q |D-1| \\
\varepsilon_3 &= \frac{\sqrt{2}}{8 \|\hat{u}-1\|_\infty} &
\varepsilon_4 &= D \\
\varepsilon_5 &= \frac{\sqrt{2}}{4} &
\Theta &= (kL)^{-1}\,,
\end{align*}
where $L$ and $M$ are as in \eqref{eq:mdef}.   We also note that $\|\hat{u}\|_\infty \leq 2$, $\|1-\hat{u}\|_\infty\leq 1$, and $q\leq 1/2$.  Thus, we have
\[
  \big(\re\lambda + |\im\lambda|\big)\intr (|w|^2 + \Theta |z|^2) \leq 4 \intr |w|^2 +C \intr \Theta |z|^2\,,
\]
where
\[
C:= \left( \frac{1}{4 D} + \left( \frac{1}{4} + \frac{1}{2}|D-1|^2\right) k L + k M \right)\,.
\]
The result follows.
\end{proof}

\begin{remark}
We easily obtain the following crude bounds on $L$ and $M$:
\begin{align*}
L & \leq \sup_{x\in\RR}{\phi'(\hat u(x))} \leq \phi'\left(u_\mathrm{ig}+\frac{\mathcal{E}_A}{2}\right) = \frac{4}{\mathcal{E}_A}\phi\left(u_\mathrm{ig}+\frac{\mathcal{E}_A}{2}\right) \leq \frac{4}{\mathcal{E}_A} \me^{-2} \approx \frac{0.5413}{\mathcal{E}_A}\,,\\
M&\leq\sup_{x\in\RR}{(1+q) \phi'(\hat{u})} \leq \frac{6}{\mathcal{E}_A} \me^{-2} \approx \frac{0.8120}{\mathcal{E}_A}\,.
\end{align*}
Numerically, we find for typical parameters that both $L$ and $M$ are less than $1/4$, and thus for moderate values of $\mathcal{E}_A$, $k$ and $D$, we have that the high-frequency bounds satisfy $\re\lambda + |\im\lambda|\leq 4$. 
\end{remark}

\subsection{Evans Function}\label{ssec:evans}
The Evans function $E(\lambda)$, defined as a Wronskian of decaying solutions at $x=\pm\infty$ of the eigenvalue equation \eqref{eq:intevalsystem}, is an analytic function of the spectral parameter $\lambda$ for $\lambda$ in the right half plane. The fundamental property of $E$, in addition to its analyticity, that we exploit here is that 
\[
E(\lambda_0)=0\Leftrightarrow\;\text{$\lambda_0$ is an eigenvalue of the linearized operator $L$.}
\]
While the Evans function is generally 
complicated
to compute analytically, it can readily be computed numerically \cite{HZ_PD06}. Since the Evans function is analytic in the region of interest, we shall numerically compute its winding number in the right-half plane. This process will allow us to systematically search for zeros of $E$ (and hence unstable eigenvalues, recall Proposition \ref{prop:lrtz} above) in the unstable half plane. The origin of this approach to spectral stability can be found in the work of Evans and Feroe \cite{EF_MB77}. These ideas have been applied to a variety of systems since; see, e.g., \cites{PSW_PD93,AS_NW95,B_MC01,BDG_PD02,HLZ_ARMA09}.

Techniques for the numerical approximation of the Evans function have been described in detail elsewhere \cites{B_MC01,HSZ_NM06,HZ_PD06,STABLAB}, so we only outline the important aspects of the computation here. 
\begin{description}
\item[Step 1. Profile] The traveling-wave equation \eqref{eq:tw} is a nonlinear two-point boundary-value problem posed on the whole line. To compute an approximation of the profile, it is necessary to truncate the problem to a finite computational domain $[-X_\sm,X_\sp]$. We use \textsc{MATLAB}'s boundary-value solver, an adaptive Lobatto quadrature scheme \cite{bvp6c}, and we supply appropriate projective boundary conditions at $X_\spm$. The  values for plus and minus spatial computational infinity, $X_\spm$, must be chosen with some care. Writing the traveling-wave equation \eqref{eq:tw} as $\hat U'=F(\hat U)$ together with the condition that $\hat U\to U_\spm$ as $\xi\to\pm\infty$, the typical requirement is that $X_\spm$ should be chosen so that $ |\hat U(\pm X_\spm)-U_\spm|$ is within a prescribed tolerance of $10^{-3}$.  We also set the errors on the solver to be {\tt RelTol=1e-8} and {\tt AbsTol=1e-9}.

We remark also that most of the profile solutions were found by continuation as it would have been difficult otherwise to provide an easy starting guesses to the boundary-value solver; see Figure \ref{fig:profiles}. Thus, an important aspect of the computational Evans-function approach we use here is the ability to continue the profile solutions throughout parameter space.
\item[Step 2. High-frequency bounds] Upon the completion of the first step, the next task is to compute the high-frequency spectral bounds supplied by Proposition \ref{prop:hfb}. This amounts to the evaluation of the quantities $M$ and $L$ in \eqref{eq:mdef}. With these quantities in hand, we may choose a positive real number $R$ sufficiently large that there are no eigenvalues of \eqref{eq:intevalsystem} outside of the domain 
\[
B^\sp_R:=B(0,R)\cap\{\re\lam\geq 0\}\,.
\]
We have thus reduced the problem of verifying the Evans condition \eqref{eq:evans_condition} to the problem of showing that the Evans function does not vanish\footnote{The use of integrated coordinates has removed the zero at the origin.} in the bounded region $B_R^\sp$. 
\item[Step 3. Evans function]
The evaluation of the Evans function is accomplished by means of the \textsc{STABLAB} package, a \textsc{MATLAB}-based package developed for this purpose \cite{STABLAB}. This package allows the user to choose to approximate the Evans function either via exterior products, as in \cites{AB_NM02,B_MC01,AS_NW95}, or by a polar-coordinate (``analytic orthogonalization'') method \cite{HZ_PD06}, which is used here. Importantly, since our search for zeros is based on the analyticity of the Evans function, Kato's method \cite{Kato}*{p. 99} is used to analytically determine the relevant initializing eigenvectors; see \cites{BZ_MC02,BDG_PD02,HSZ_NM06} for details.  Throughout our study, we set the errors on \textsc{MATLAB}'s stiff ODE solver {\tt ode15s} to be {\tt RelTol=1e-6} and {\tt AbsTol=1e-8}.
\item[Step 4. Winding]
Finally, we compute the number of zeros of the Evans function $E$ inside the semicircle $S=\partial B^\sp_R$ by computing the winding number of the image of the curve $S$, traversed counterclockwise, under the analytic map $E$. To do this, we simply choose a collection of test $\lam$-values on the curve $S$, and we sum the changes in $\arg E(\lam)$ as we travel around the semicircle. These changes can be computed directly via the simple relation 
\[
\im\log E(\lam)=\arg E(\lam) \mod 2\pi\,.
\]
We test a posteriori that the change in the argument of $E$ is less than $0.2$ in each step, and we add test values if necessary to achieve this. Most curves resolved well within that tolerance using 120 mesh points in the first quadrant and reflecting by conjugate symmetry for the fourth quadrant.  We recall that by Rouch\'e's theorem, an accurate computation of the winding number is guaranteed as long as the argument varies by less than $\pi/2$ between two test values \cite{Henrici}.
\end{description}

\section{Experiments}\label{sec:experiments}

We now describe our experiments. We recall, from \S\ref{ssec:end} above, that $s=1$, and $u_\sm\in[1,2]$ is given explicitly in \eqref{eq:umq} as a function of $(u_\sp,q)\in\mathcal{U}$.  Although we examined the full range of $u_\sp$ and several values of $u_\mathrm{ig}$, we found that the output was not qualitatively different than setting $u_\sp=0$ and $u_\mathrm{ig}=0.1$, and so we use those values throughout this section. The complete list of parameter values tested is given in Appendix \ref{sec:parameter}.

\subsection{Activation energy}
Bourlioux \& Majda \cite{BM_PT95} have noted that, in the context of the the Euler system for reacting gas, a broad range of phenomena can be observed simply by varying the activation energy $\mathcal{E}_A$ and the heat release $q$. 

\subsubsection{Large activation energy}\label{ssec:bige}
The limit of infinite activation energy is studied in the literature (e.g., Buckmaster \& Neves \cite{BN_PF88}) at least in part because of the simplification it affords. In particular, for a one-step reaction with Arrhenius kinetics, the steady structure of physical ZND waves can be precisely described in this limit; this facilitates the analysis. The hope is that such analysis might plausibly be extrapolated to shed insight into the behavior of waves with large (but finite) activation energies. 
Following common practice, e.g., as in \cite{BZ_majda-znd}, we scale the reaction rate $k$ in part to keep the tail in the computational domain. Based on comparisons to the physical equations, this is the regime in which one would expect to find unstable eigenvalues (if any), and we regard this as one of the principal computations of the paper. We adopt the rescaling for $k$ in terms of activation energy used by Barker \& Zumbrun \cite{BZ_majda-znd} in the for the inviscid Majda-ZND model. That is, we do not compute exactly the half-reaction width as is common in the (inviscid) detonation literature. Rather, we use Barker \& Zumbrun's rough, effective scaling by a factor of $\exp(\mathcal{E}_A/2)$; this simple-to-implement low-cost scaling keeps the reaction width constant within a factor of 1.5 or so, and therefore accomplishes the principal goal of staying within the correct computational regime. Indeed, it is evident that an increase in the value of the activation energy  decreases the size of $\phi$
on the computational domain, so the change in activation energy effectively decreases the value of $k$.  More precisely, it decreases the important 
quantity $k\phi$ that determines the rate of decay.  This explains the need to rescale $k$ for large $\mathcal{E}_A$ in order to keep reaction length approximately fixed.

Representative output is shown in Figure \ref{fig:largeE}.
\begin{figure}[t]
\begin{center}
$\begin{array}{cc}
\includegraphics[width=7cm]{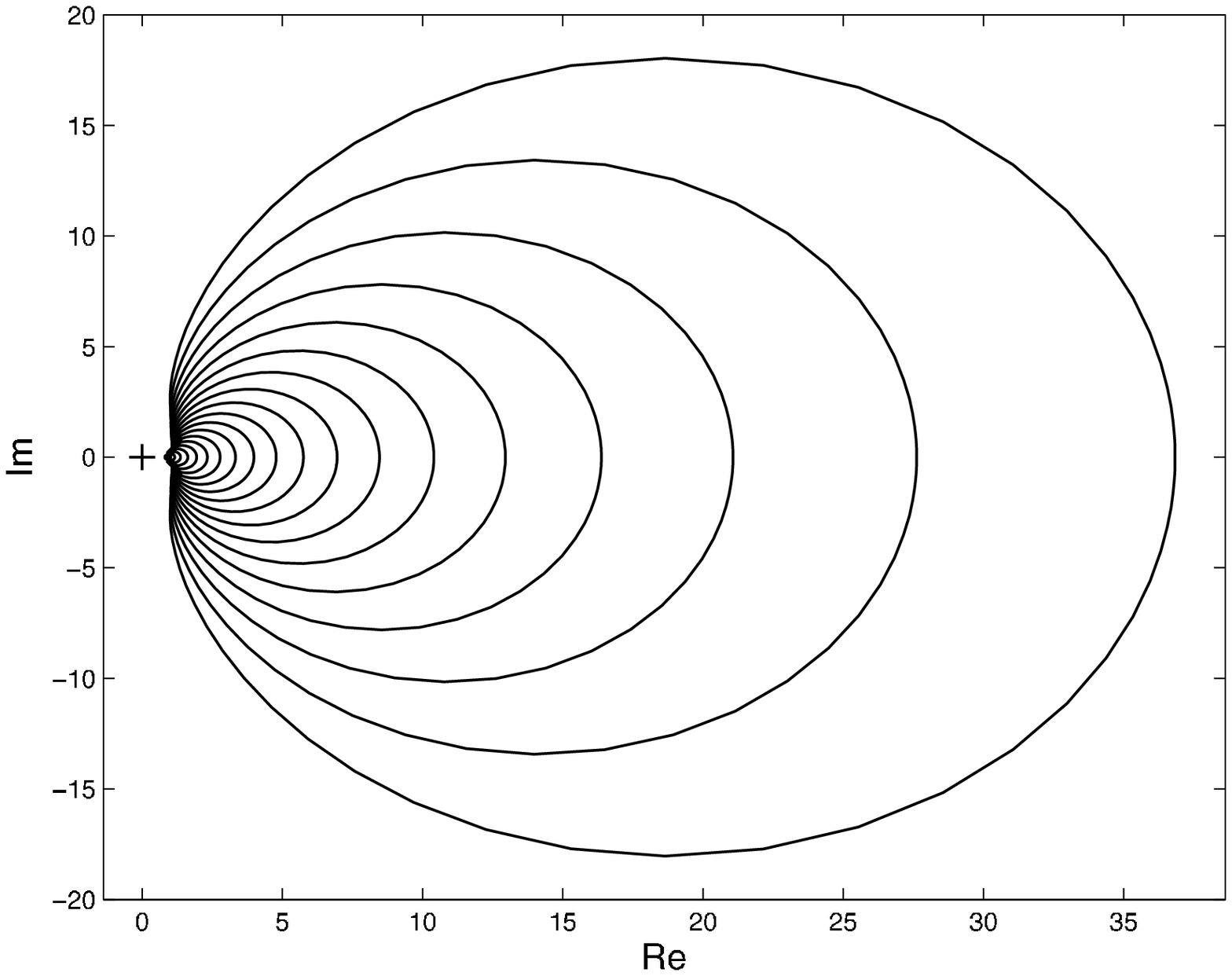} & \includegraphics[width=7cm]{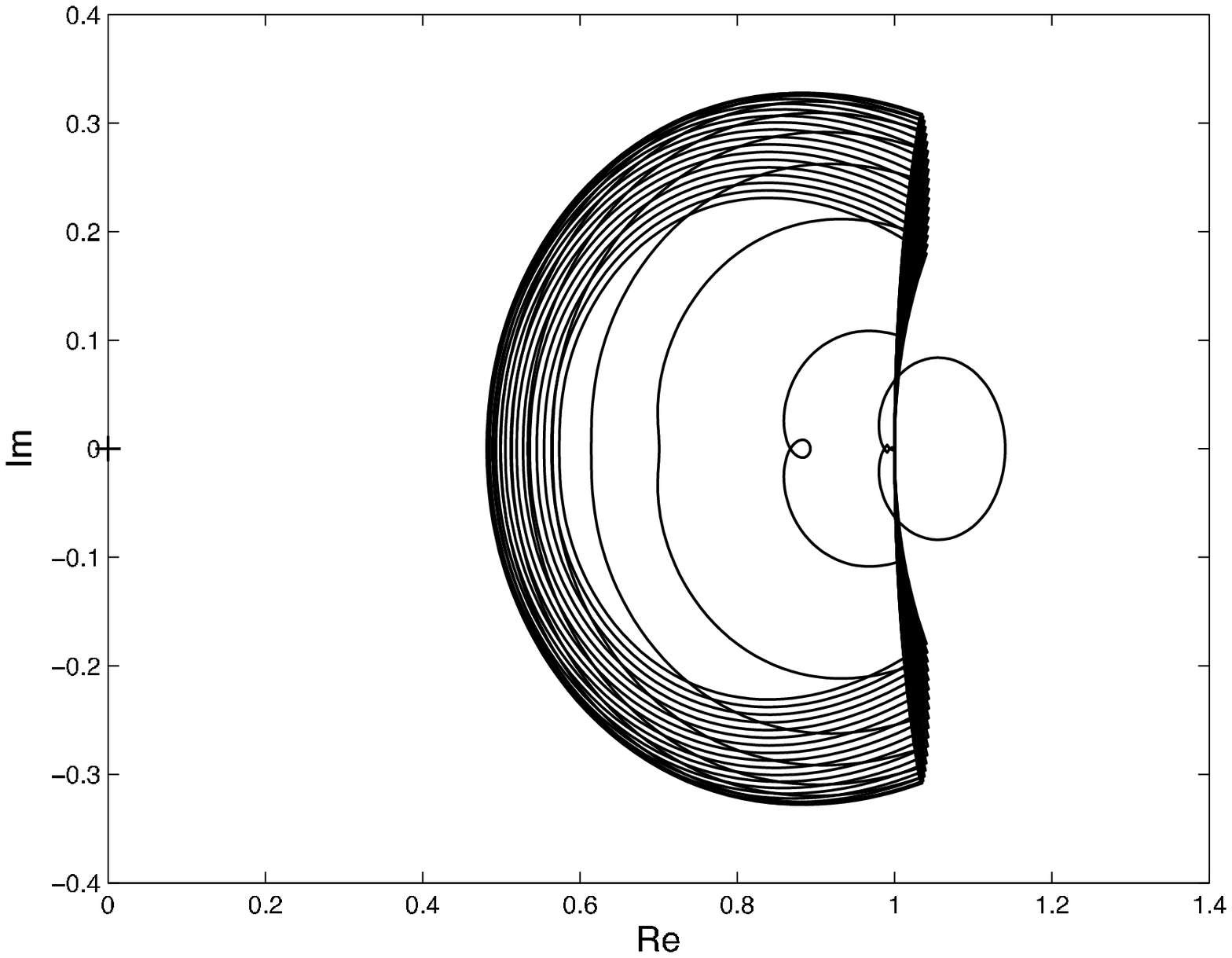} \\
\mbox{\bf (a)} & \mbox{\bf (b)}
\end{array}$
\end{center}
\caption{\textbf{(a)}: Activation energy $\mathcal{E}_A$ ranging from  $1$ to $44$ with $q=0.2$ and $k$ scaled to yield approximately constant reaction length. \textbf{(b)}: Activation energy $\mathcal{E}_A$ ranging from  $1$ to $40$ and $q=0.001$.  The radius of the semicircle contour used in both images is $R=40,\!000$.}
\label{fig:largeE}
\end{figure}
Unfortunately, as $k$ grows, the high-frequency bounds of Proposition \ref{prop:hfb} degrade. For example, for the upper value of $\mathcal{E}_A$ of $44$ shown in Figure \ref{fig:largeE}, the needed radius is of the order of $10^{10}$ and is not feasible, and we lose our ability to rule out the possibility of large eigenvalues outside of our semicircle. Nonetheless, our experiments for a range of contours show very nice, regular behavior with no hints of instability. 

\br
We note that one possible way to get a better handle on this issue would be to use curve-fitting to test for convergence of the Evans function $E$ to its limiting large-$\lam$ asymptotic shape at smaller
radii. This would have the practical effect of revealing some nontrivial cancellation that is not captured by our energy estimate. See, e.g., \cite{BZ_majda-znd}. However, we do not pursue this idea any further here. 
\er
\subsubsection{Intermediate activation energy}

In Figure \ref{fig:midE}, we see a sample of Evans function output for various values of $\mathcal{E}_A$ as $q$ is varied.  For small values of $q$, the rightmost portion of the curves corresponds to the values of the Evans function along the imaginary axis.  In other words, the value of the Evans function is larger along the imaginary axis than around the half-circle of radius $4$. Upon zooming in on the origin, see Figure \ref{fig:midE}\textbf{(b)}, we observe that the output contours feature small ``noses'' pointing toward the origin. These correspond to the output of the imaginary axis for values of heat release $q$ near its maximum possible value of $1/2$, i.e., near the the limiting CJ value.

\begin{figure}[ht]
\begin{center}
$\begin{array}{cc}
\includegraphics[width=7cm]{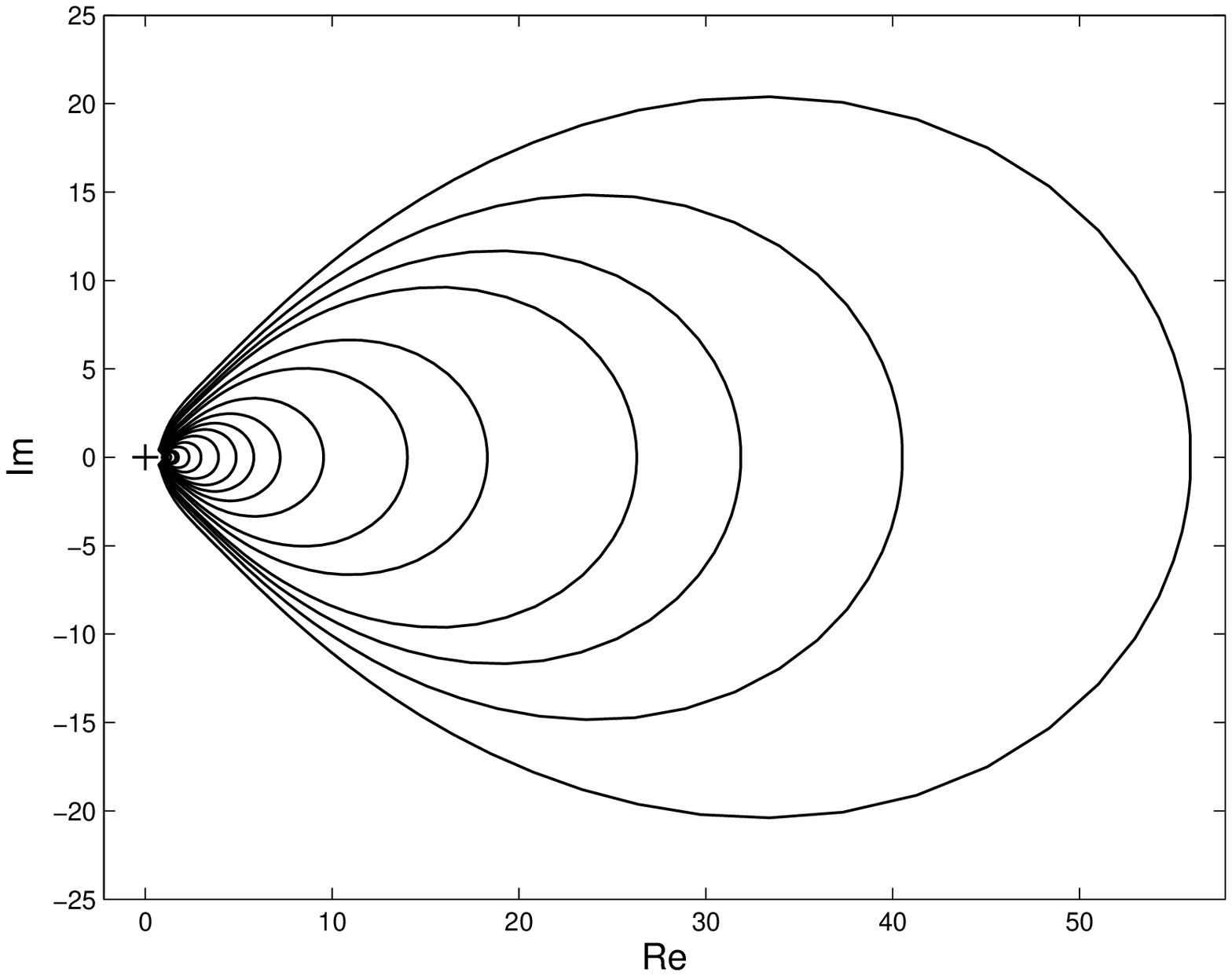} & \includegraphics[width=7cm]{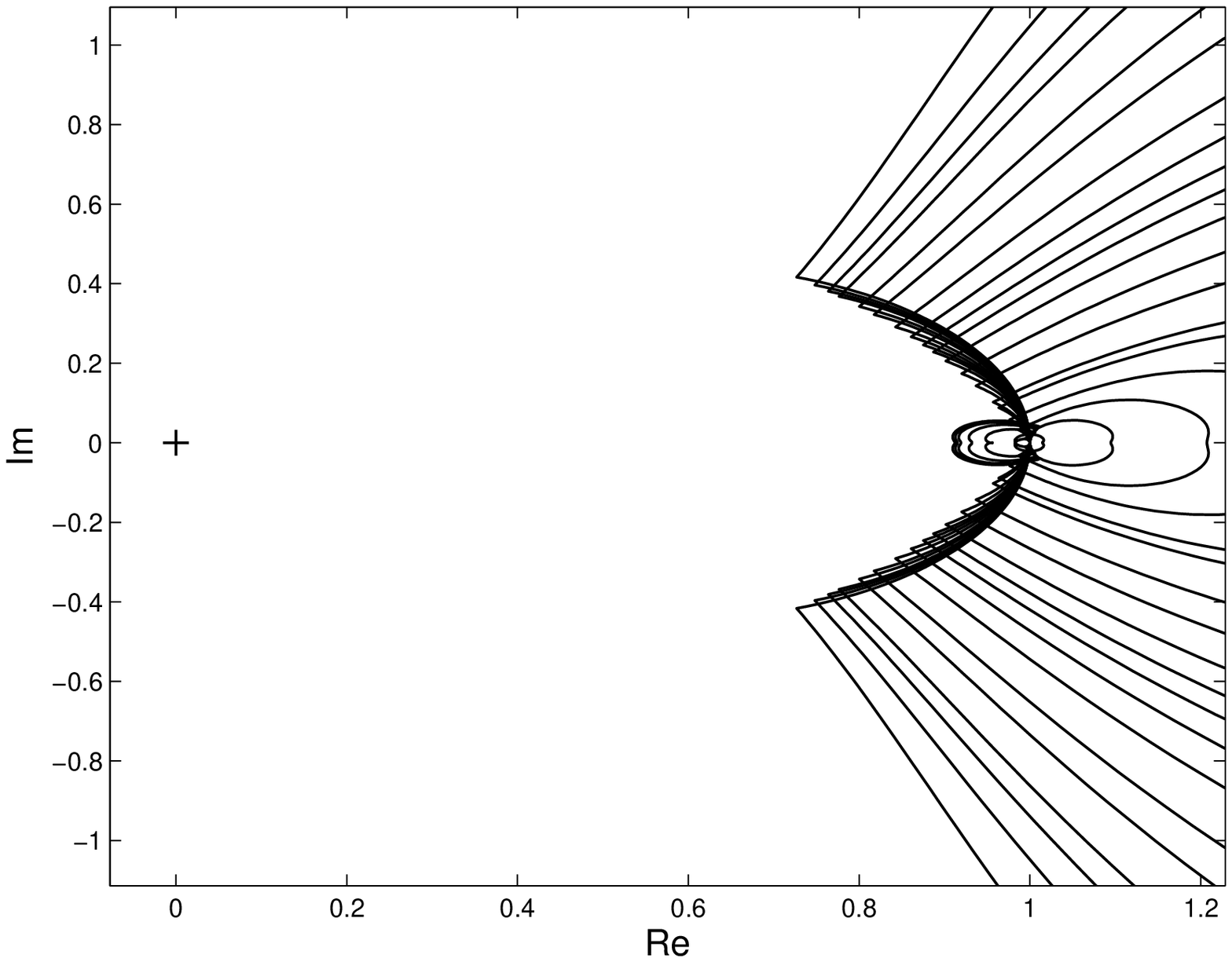} \\
\mbox{\bf (a)} & \mbox{\bf (b)} \\
\includegraphics[width=7cm]{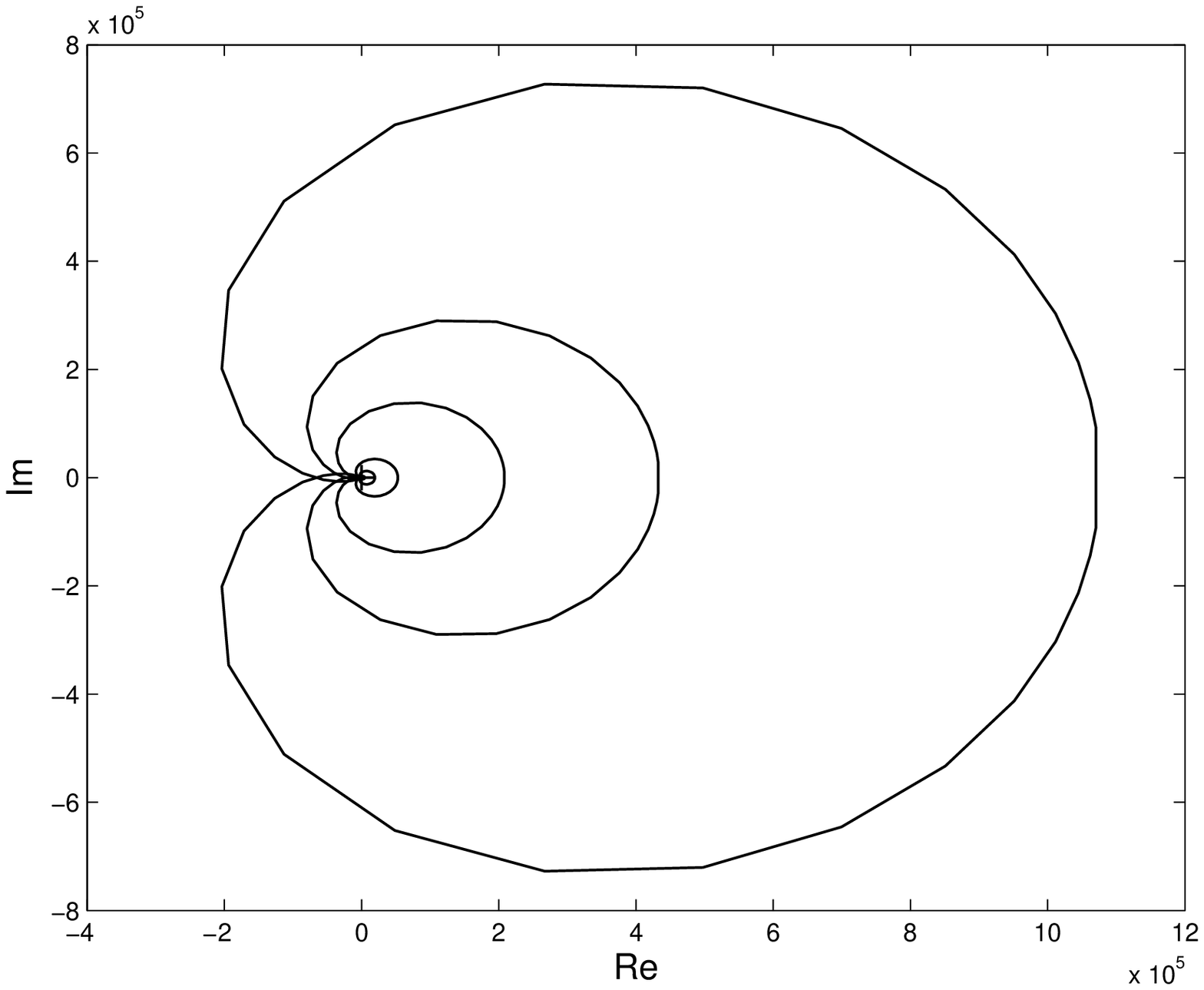} & \includegraphics[width=7cm]{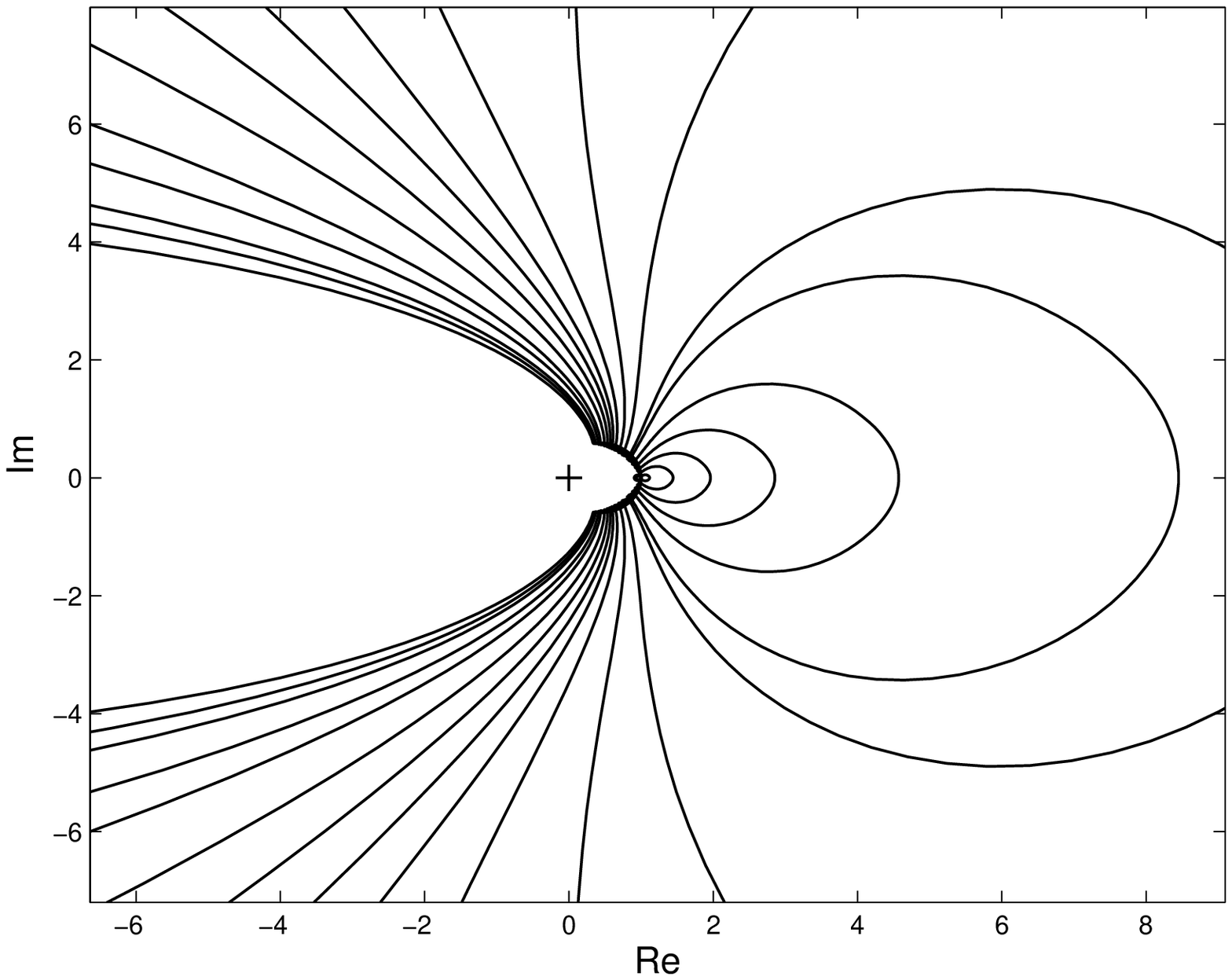} \\
\mbox{\bf (c)} & \mbox{\bf (d)}
\end{array}$
\end{center}
\caption{Evans function output for various values of $\mathcal{E}_A$, the full range of values for $q$ and intermediate values of the other parameters ($D=k=1$).  The input contours contain the high-frequency bounds given in \eqref{eq:bound}, and consist of a half-circle contour in the right half plane centered at the origin with radius $4$. The values of activation energy are \textbf{(a)} $\mathcal{E}_A=2$ and \textbf{(c)}  $\mathcal{E}_A=4$.  Figures \textbf{(b)} and \textbf{(d)} are magnified versions of \textbf{(a)} and \textbf{(c)}, respectively, with particular attention paid to the origin, where we can see that the winding number is zero in \textbf{(b)}.  It is also zero in \textbf{(d)} but the structure is richer.}
\label{fig:midE}
\end{figure}

\subsubsection{Small and zero activation energy}\label{ssec:smalle}
Suggestively, our experiments show that the noses seen in Figure \ref{fig:midE}\textbf{(b)} grow considerably for small values of activation energy.  For example, in Figure \ref{fig:smallE}, we see that for $\mathcal{E}_A=1/4$, these loops have grown substantially, and the continued growth and proximity to the origin is dramatic when $\mathcal{E}_A=1/8$. This behavior hints at a possible instability for sufficiently small values of $\mathcal{E}_A$ and $q\approx 1/2$. Indeed, in Figure \ref{fig:smallE}\textbf{(b)}, we see that the contour almost intersects the origin for $q=0.45$. This raises the question of whether it would indeed intersect the origin at $q$ approaches $1/2$.  Such an event would indicate an unstable eigenvalue crossing into the right half plane.  As it turns out, however, we are unable to get profiles for values of $q$ larger than $q=0.45$. Indeed, as we shall now describe,
this seems to be the boundary of existence of profiles, and not just a numerical difficulty.

We found that a tell-tale sign of the failure of the profile solver is the formation of a ``bench''---a nearly flat portion
of the profile---in $\hat u$; see Figure \ref{fig:profileA} in Appendix \ref{sec:zeroe} for a picture of the bench in the case that $\mathcal{E}_A$ is zero. We were able to get the boundary-value solver to return similar-looking structures for nonzero $\mathcal{E}_A$. However, these ``solutions'' had unacceptably large residuals, and are not to be trusted. An important feature of the benches is that they occur at heights corresponding to $u_\sm^\mathrm{w}$, and the failure of the boundary-value solver as the ``noses'' approached the origin in the complex plane is associated with the approach of $q$ to one of those distinguished values for which a weak detonation profile exists.  

Indeed, to further investigate this phenomenon, which we found for small values of the activation energy $\mathcal{E}_A$, we considered the limiting case of zero activation energy. We give a detailed discussion of this mathematically interesting case in Appendix \ref{sec:zeroe}. Briefly, in the zero-$\mathcal{E}_A$ setting, it is simple to see by ``shooting'' that for $q$ small, the profiles match up nicely to those with small activation energy but that as $q$ increases, the profile start to develop a bench (corresponding to the distinguished $q$ value for which a weak detonation exists). For larger $q$ values no profile of either type exists.  Finally, we recall that the limit of vanishing activation energy 
is  
(essentially) a regular perturbation problem for which we may readily
establish convergence as $\mathcal{E}_A\to 0$ to the $\mathcal{E}_A=0$
flow,
so that the observed behavior with $\mathcal{E}_A=0$ does indeed describe the behavior for $\mathcal{E}_A$ sufficiently small.

\begin{figure}[ht]
\begin{center}
$\begin{array}{cc}
\includegraphics[width=7cm]{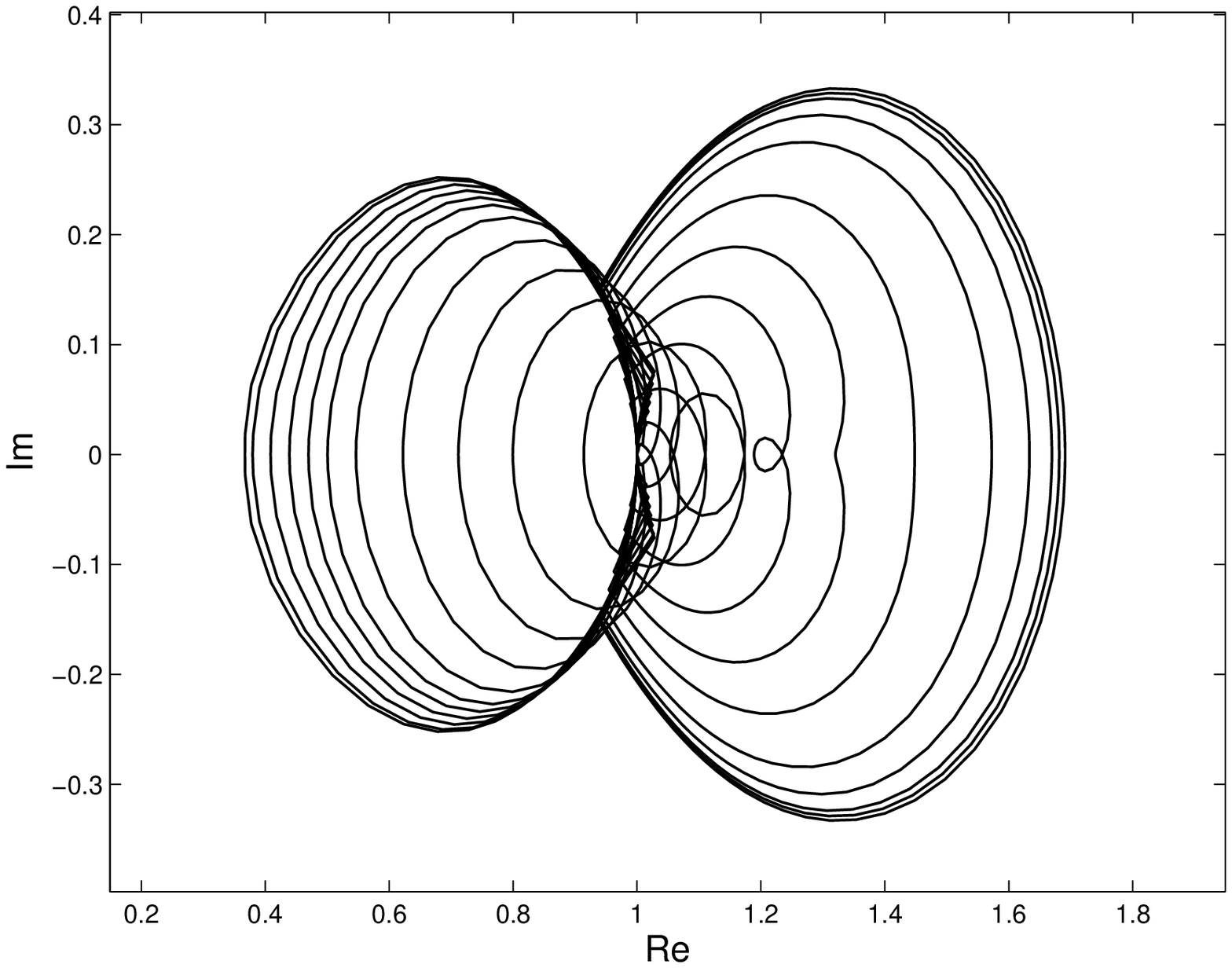} & \includegraphics[width=7cm]{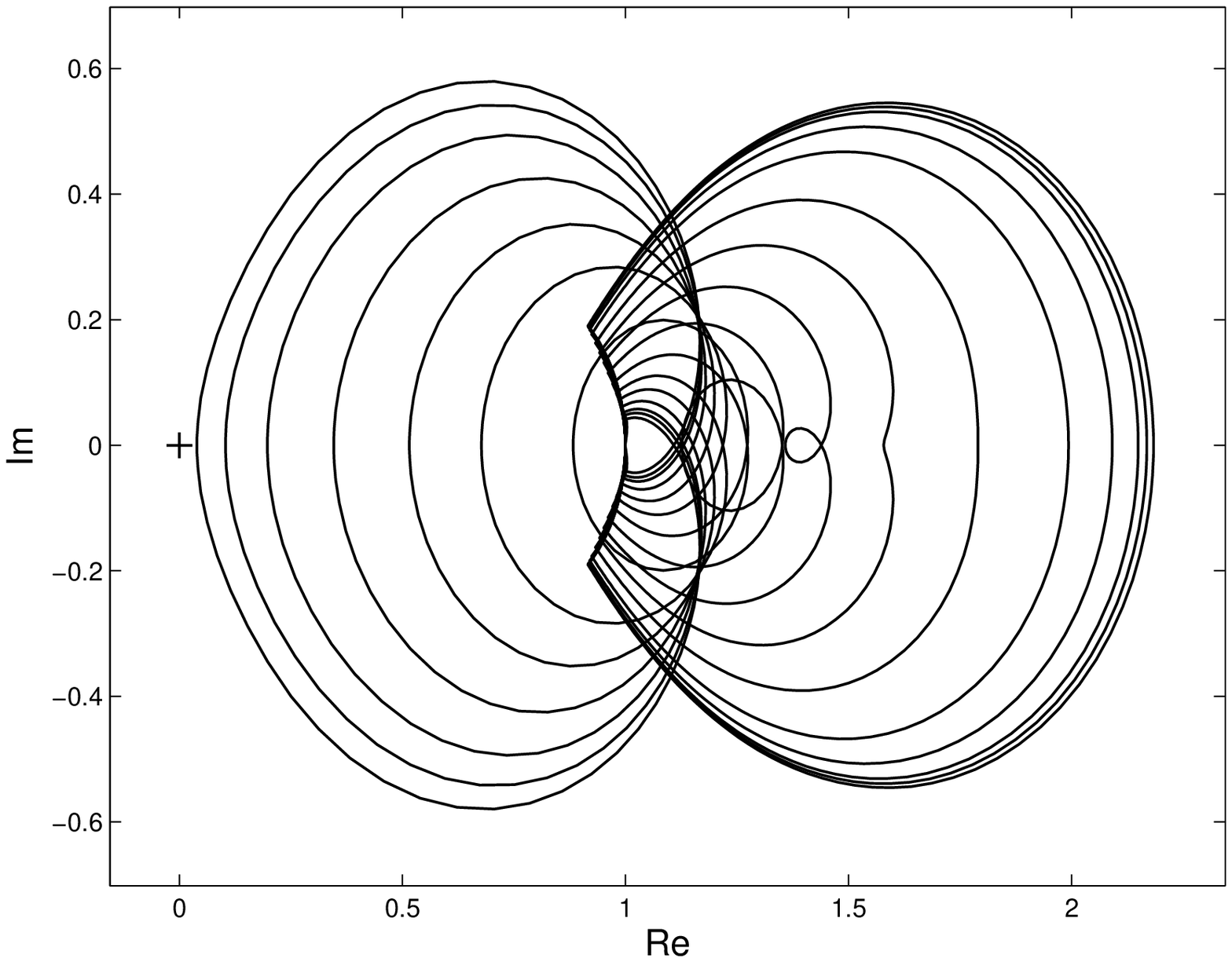} \\
\mbox{\bf (a)} & \mbox{\bf (b)}
\end{array}$
\end{center}
\caption{Examination of the Evans function output as the activation energy is small and the other parameters, $D$ and $k$, are unity.  In particular, we examine the system for \textbf{(a)} $\mathcal{E}_A=1/4$ and $q$ ranging from $q=0.001$ to $q=0.499$ and \textbf{(b)} is for $\mathcal{E}_A=1/8$ and $q$ ranging from $q=0.001$ to $q=0.45$.  The contour radii are \textbf{(a)} a radius of $4$ and \textbf{(b)} a radius of $6$.}
\label{fig:smallE}
\end{figure}


\subsection{Limiting Behavior}\label{ssec:limit}
We observe that both of the limits $D\to 0$ and $k\to 0$ considered in this section are singular limits. As such, it is more appropriate to study them by 
means of an Evans-function analysis involving the formal limiting problem as in \cite{Z_ARMA11}. 

\subsubsection{The ZND Limit ($k\to 0$)}
Also, $\mathcal{E}_A=D=1$, and $k\to 0$ is interesting (ZND limit), though badly conditioned.  In the case of small $k$, we find that the tail on the left hand side gets long.  We also see that the contours do not get closer to the origin in the limit.  in Figure \ref{fig:smallK2}, we see the same qualitative behavior as seen in Figure \ref{fig:midE}\textbf{(d)}.  Specifically, the Evans function output wraps about the origin and then unwraps, thus producing a winding number of zero, but with a richer structure.

\br
Zumbrun's analysis \cite{Z_ARMA11} of the ZND limit shows that zeros of the Evans function $E$ converge in the ZND limit to zeros of the associated Evans function for the Majda-ZND system, call it $E_0$. However, the same analysis shows that the convergence of the functions themselves, namely the convergence of $E$ to $E_0$, is a much more delicate issue. In particular, it can be seen only after a special rescaling which is badly behaved in the limit. 
\er

\begin{figure}[t]
\begin{center}
$\begin{array}{cc}
\includegraphics[width=7cm]{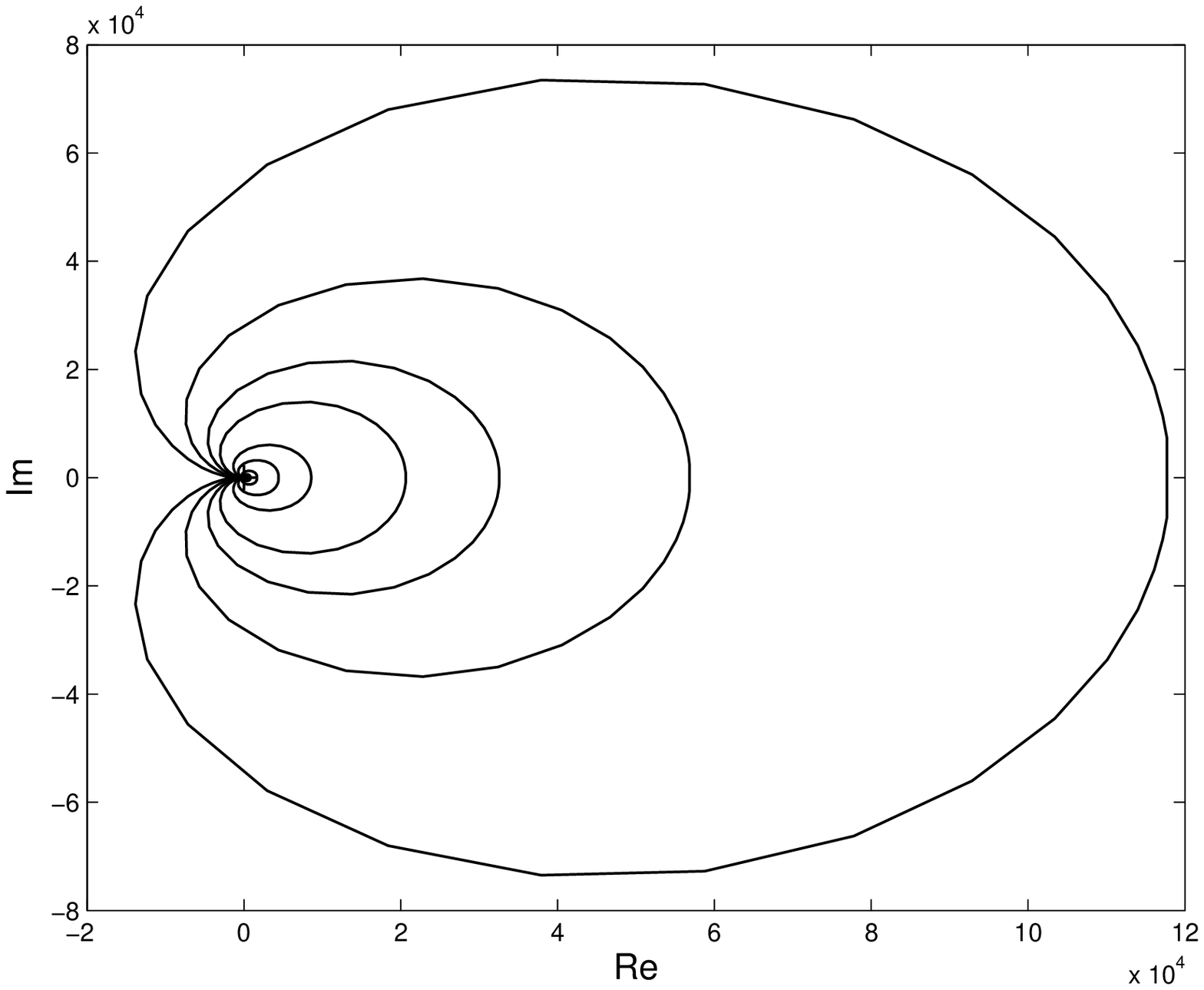} & \includegraphics[width=7cm]{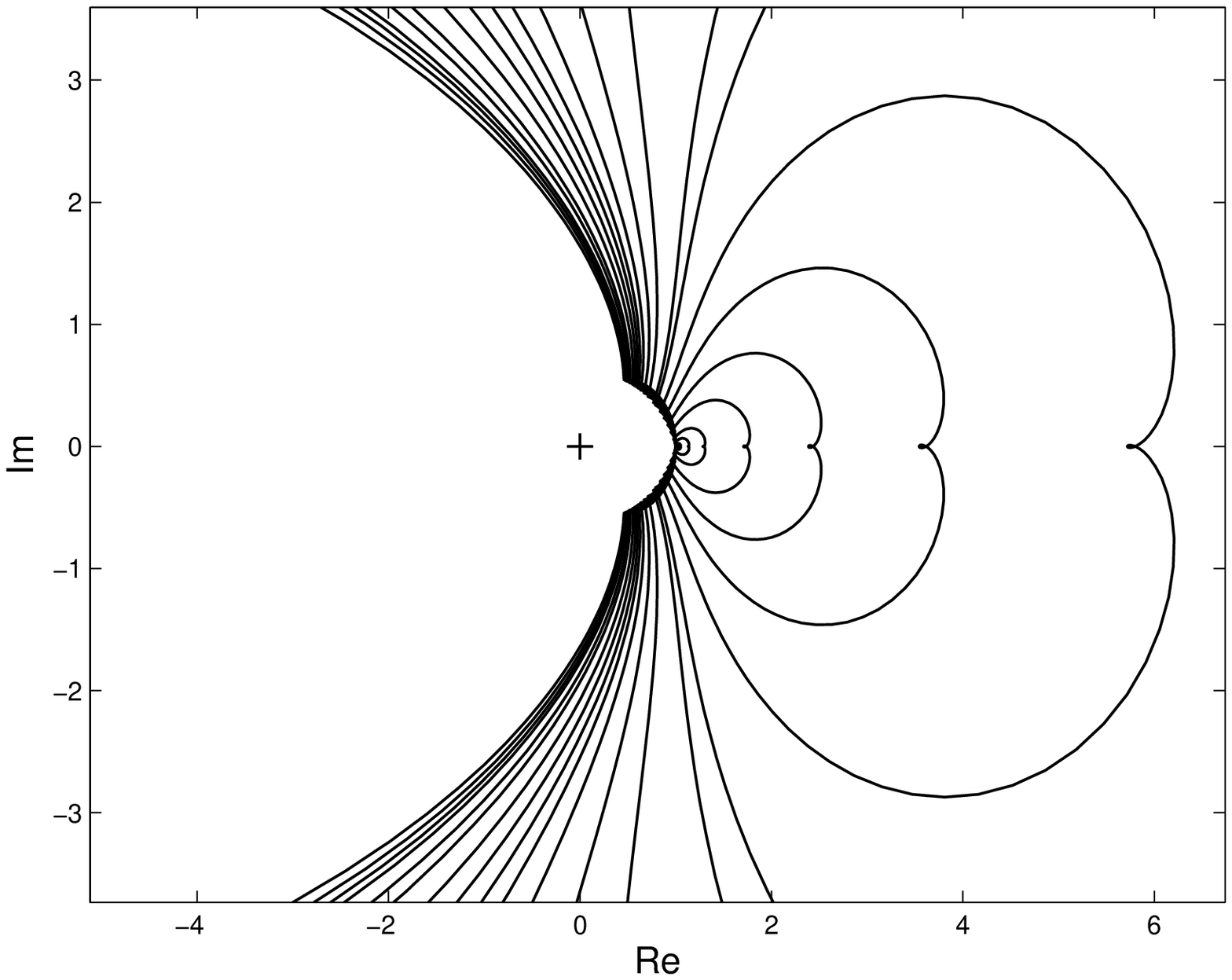} \\
\mbox{\bf (a)} & \mbox{\bf (b)}
\end{array}$
\end{center}
\caption{$\mathcal{E}_A = D = 1$ with $q$ varying through its admissible range with $k=1/8$.  These contours all have winding number zero despite them wrapping a little about the origin.}
\label{fig:smallK2}
\end{figure}

\subsubsection{The Majda--Rosales Limit ($D\to 0$)}
Next, $k=1=\mathcal{E}_A$ and $D\to 0$ (Majda-Rosales limit) is also interesting.  In this case, the tail does not lengthen as it does in the small $k$ and large $\mathcal{E}_A$ cases.  In the case of small $D$, we likewise find the Evans function contours do not get closer to the origin in the limit.  In fact, in Figure \ref{fig:smallD}, we see the contours getting farther away.  We remark that in the small $D$ limit, the high-frequency bounds blow up, and thus limited computational investigations are possible.

\begin{figure}[t]
\begin{center}
$\begin{array}{cc}
\includegraphics[width=7cm]{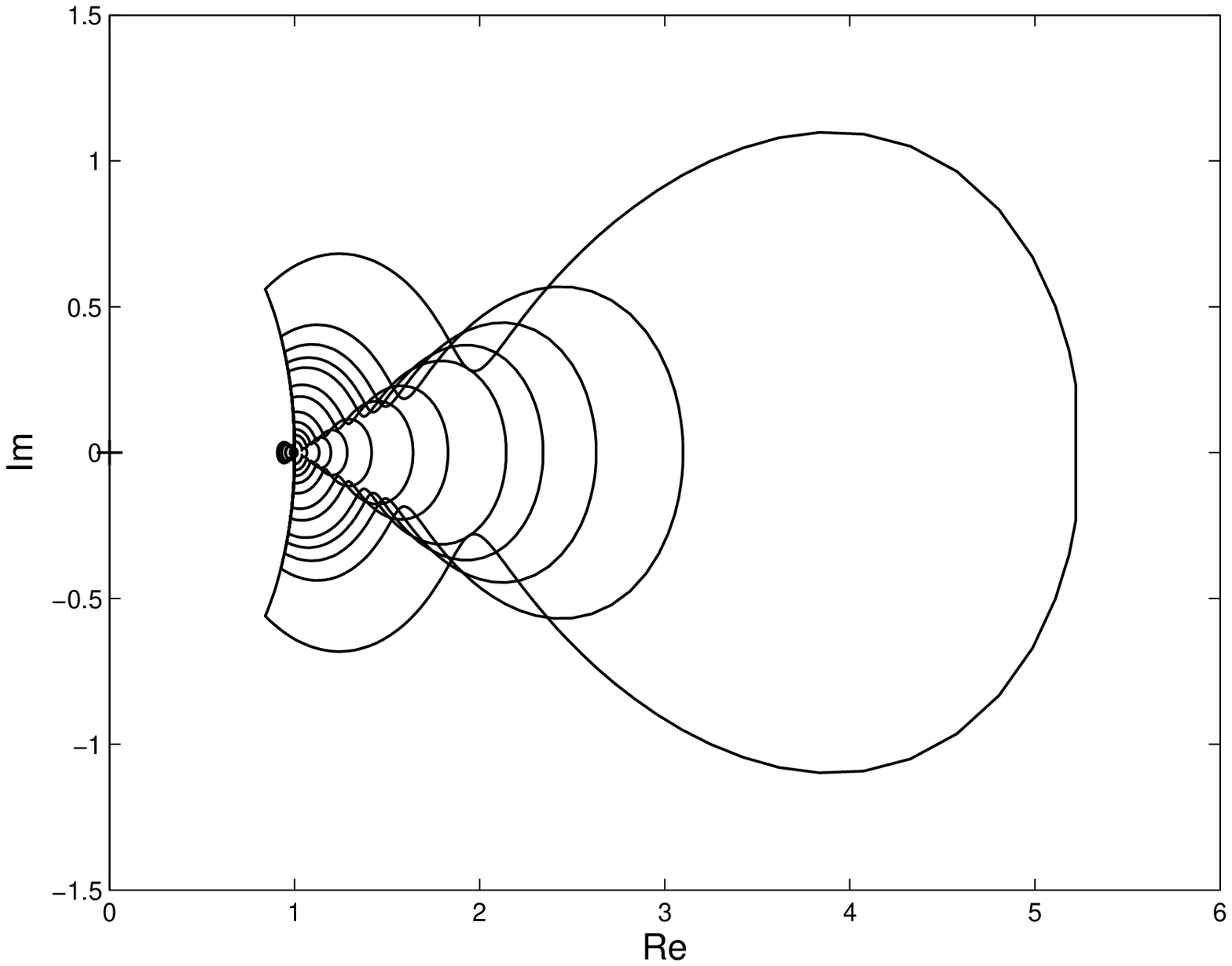} & \includegraphics[width=7cm]{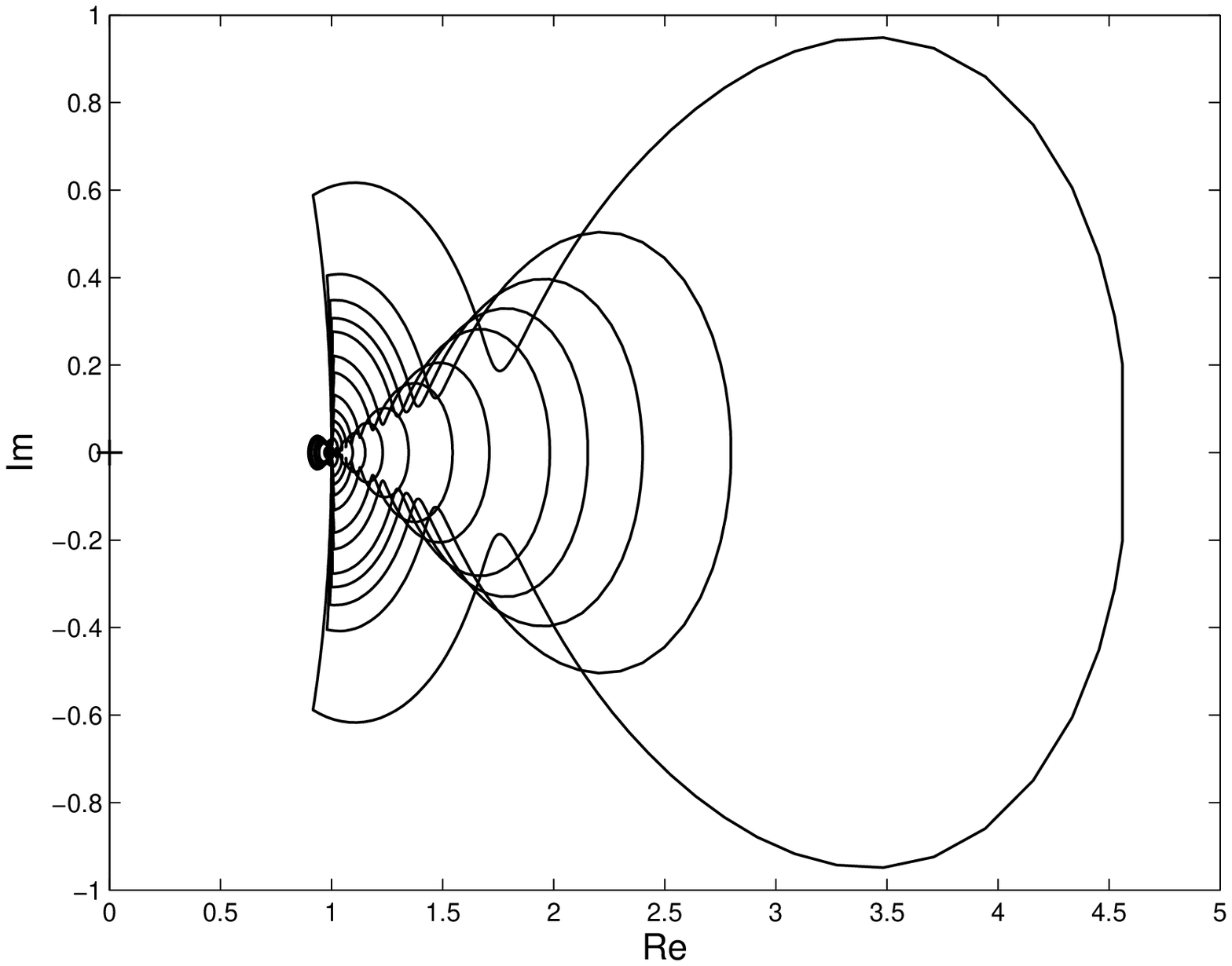} \\
\mbox{\bf (a)} & \mbox{\bf (b)}
\end{array}$
\end{center}
\caption{$\mathcal{E}_A = k = 1$ with $q$ varying through its admissible range with \textbf{(a)} $d=1/8$, $R=4$ and \textbf{(b)} $d=1/16$, $R=5.1$.}
\label{fig:smallD}
\end{figure}

\section{Conclusions}\label{sec:conclusions}
\subsection{Discussion}

\subsubsection{Findings}
The principal finding of our study is spectral stability. Combining the observed spectral stability with Proposition \ref{prop:lrtz}, we conclude that the detonation-wave solutions tested are \emph{nonlinearly} stable.  Indeed, we find no evidence of Hopf bifurcation as might be expected from the ``pulsating'' structures observed in the physical equations. As noted above this answers in the negative Majda's question as to whether his scalar combustion model might support such behaviors. 
Moreover, combining our work with that of Barker \& Zumbrun \cite{BZ_majda-znd} and the discussions in Kasimov et al.\ \cite{KFR} and Radulescu \& Tang \cite{RT_PRL11}, we may say with some confidence that the standard, scalar Majda/Fickett models---while providing good agreement with steady-state structures---do \emph{not} appear to reproduce the stability, or rather instability, properties of detonation waves that are seen in the physical systems. 

Rather than abandon the Majda/Fickett paradigm as too unphysical, Radulescu \& Tang \cite{RT_PRL11} have shown, by direct numerical simulation, that adding a certain forcing term to Fickett's model allows them to find pulsating detonation waves. 
Similarly, Kasimov et al.\ \cite{KFR} find that even the forced, scalar conservation law with viscosity,
\[
u_t+\frac{1}{2}(u^2-uu_s)_x=f(x,u_s)\,,
\]
considered on the quarter plane $(x,t)\in(-\infty,0]\times[0,\infty)$ with $u_s(t):=u(0,t)$ and a prescribed forcing function $f$,
better captures many of the detonation phenomena of interest, namely, steady traveling-wave solutions, and their instability through a cascade of period-doubling bifurcations.
%
%
We note, however, that in contrast to the approaches of \cite{KFR} and \cite{RT_PRL11}---based on direct numerical simulation of the partial differential equation, our approach via the Evans function allows one, in principle, to get detailed information about the eigenstructure of the linearized equations. Indeed, the Green function bounds of Lyng et al.\ \cite{LRTZ_JDE07} used in the proof of nonlinear stability provide a great deal of information about the behavior of solutions of the linearized equations. Moreover, in the presence of unstable eigenvalues, the Evans-function framework allows one to track the eigenvalues as system parameters vary; this feature has proven to be particularly important in our preliminary efforts to apply the ideas and techniques of this paper to the physical setting \cite{BHLZ}. 

\subsubsection{Computational Effort}

The overall investigation consisted of computing thousands Evans function contours and represented an immense and tedious interactive computational effort.  Profiles needed to be routinely continued into each edge of parameter space, exploring the large and small limits of the spectrum in each of $D$, $k$, $\mathcal{E}_A$, $q\in[0,1/2]$, and $u_\mathrm{ig}$.  As the domains for large $\mathcal{E}_A$ and small $k$ produced slowly decaying profiles on the left side, continuation required the computational domain to be stretched out continually.  Also in the $\mathcal{E}_A\to 0$ limit, continuation became impossible for large $q$, suggesting a loss of existence in the limit.  All large $q$ profiles had a combustion spike, and thus needed to be continued from the small $q$ case.  Numerical investigations were performed by \textsc{STABLAB} \cite{STABLAB} on an 8-core Mac Pro in the \textsc{MATLAB} computing environment. See Appendix \ref{sec:parameter} for complete details on the parameter choices for the experiments. 

\subsubsection{Organizing Centers}
In Majda's original paper \cite{M_SIAMJAM81}, the coefficient of species diffusion, $D$ in \eqref{eq:mm}, is taken to be zero. One consequence of this assumption is that the resulting system of traveling-wave equations is planar. This feature is strongly used in Majda's proof of the existence of traveling-wave solutions. For example, he uses the Poincar\'e--Bendixson theorem in the proof.  By contrast, in our setting ($D\neq 0$), the system of traveling-wave equations \eqref{eq:tw} forms a three-dimensional dynamical system, and the resulting dynamics are not so simple to describe or visualize. 

\br
For example, we note that, although the statements of the results obtained by Larrouturou's \cite{L_NA85} in his analysis of the existence problem for traveling-wave solutions of \eqref{eq:mm-d} are effectively the same as those obtained by Majda in the original paper \cite{M_SIAMJAM81} for \eqref{eq:mm0}, the substance of Larrouturou's proof is strikingly different due primarily to the nonplanar phase space. In particular, Larrouturou uses Leray--Schauder degree theory to solve the problem on $[-\alpha,\alpha]$ and then obtains sufficient control of his truncated-domain solutions to let $\alpha\to\infty$. 
\er

But, there are at least two interesting limits/organizing centers that do give intuition. The first, evidently, is the $D\to 0$ (``Majda'') limit studied by Majda (as described above) in which the relevant profile equations reduce to a planar system. Another simplifying limit is the $\mathcal{E}_A\to 0$ (zero-activation-energy) limit, in which coupling
reduces to a single point where $u=u_\mathrm{ig}$. We give an outline of the construction of traveling-wave profiles in the zero-activation-energy limit in Appendix \ref{sec:zeroe}. Moreover, we show that the associated Evans function has additional structure in this case, and, in any case, can easily be computed in the \textsc{STABLAB} environment. In either case we find that there is a range of parameter values for which strong detonations exist, bounded by a codimension-one curve on which weak detonations exist, outside of which no connecting profile exists, of either type.
These two organizing centers coalesce at the interesting double limit where $\mathcal{E}_A=0$ and $D=0$. Notably, we find in this case a loss of regularity in the profiles. The $z$ profile is now only continuous with kink at $u=u_\mathrm{ig}$. Again we see both weak and strong detonations, as $k$ is varied with other parameters held fixed; the argument is similar to the $\mathcal{E}_A=0$ case described in Appendix \ref{sec:zeroe}.  
Also, as expected, the Evans function in this reduced setting features an additional reduction in complexity. 

Finally, we note that the triple limit $\mathcal{E}_A=0, D=0, B=0$ (zero-activation-energy limit of ZND) was studied recently by Jung \& Yao \cite{JY_QAM12}. Remarkably, for this reduced problem (for which there are no weak detonations) Jung \& Yao were able to  explicitly establish the nonvanishing of the Evans--Lopatinski\u\i\ determinant (the analogue of the Evans function for discontinuous ZND detonation waves; see \cite{JLW_IUMJ05} for details), and therefore rigorously prove spectral stability of detonations in the reduced system. By contrast, we do not see how to analytically establish stability in either of the cases identified above as an organizing center. That is, the Majda limit and the zero-$\mathcal{E}_A$ limit each seem to require numerical evaluation of the Evans function as pursued here for the full system \eqref{eq:mm}.


\subsection{Future directions}
\subsubsection{Navier--Stokes}
The most natural and significant direction for further study is to apply these techniques and their necessary extensions to the physical case of the reactive  Navier--Stokes equations \eqref{eq:rns}.
As noted above in \S\ref{ssec:overview}, this represents a substantial increase in complexity. However, it addresses a fundamental issue. 
Indeed, from the pioneering early work of Erpenbeck \cite{E_PF62} to the now classic study of Lee \& Stewart \cite{LS}, the standard approach to the study of detonation stability has been in the context of the (inviscid) reactive Euler equations.\footnote{This would be system \eqref{eq:rns} with $\mu$, $\kappa$, and $d$ all set to zero.} A pair of more recent surveys \cites{SK_JPP06,BS_ARFM07} confirms the central role that these equations continue to play in the stability theory and other aspects of the theory of detonation phenomena more generally. 
However, as noted by Majda \cite{M_book}, the approach---although quite successful in ideal gas dynamics---of neglecting second-order terms and using other information, e.g., an entropy condition, to incorporate the effect of the neglected diffusive terms on the solution seems to be much more delicate in the case of combustion systems. Thus, a fundamental issue is to quantify and clarify the role of these second-order terms. An Evans-function-based approach to this problem was initiated by Lyng \& Zumbrun \cite{LZ_ARMA04}, and this paper represents one key step in this ongoing program. 
Continuing this program, together with Barker, the authors have 
carried out corresponding Evans computations
for detonation-wave solutions of the full reactive 
Navier--Stokes equations \cite{BHLZ}, 
with promising initial results.
We expect that the Evans-function framework will allow us to refine our understanding of the role of viscosity in detonation stability. 
As additional motivation, we note that the recent parallel work of Romick, Aslam, \& Powers \cite{RAP_JFM12} suggests that there are definitely regimes in which the diffusive effects of viscosity, heat conductivity, and species diffusion play a non-negligible role in the dynamics. Their study, based on direct numerical simulations by finite differences, examines the evolution of ZND profiles under the reactive Navier--Stokes equations \eqref{eq:rns}. Notably, they found that the stability characteristics of these waves were altered in the presence of diffusive effects with the onset of instability
as $\mathcal{E}_A$ is increased delayed by $\approx10\%$. This is in agreement with our own experiments \cite{BHLZ}, which, moreover, show
much more dramatic discrepancies at the level of unstable eigenvalue distribution after the onset of instability.  (These, being related to bifurcation and/or exchange of stability, are related to more detailed aspects of behavior beyond simple instability which are often of great importance.)
Finally, we note that other natural variations to consider include the consideration of planar waves in several space dimensions and more realistic reaction schemes.

\subsubsection{Weak detonations}
Within the simplified framework of the Majda model, another promising direction of study concerns the stability of weak detonations---the rare, undercompressive detonation waves described in Remark \ref{rem:weak}. Notably, the nonlinear analysis of Lyng et al. \cite{LRTZ_JDE07} recorded in Proposition \ref{prop:lrtz} above treats such waves, and the study of their nonlinear stability is, as in the case of the strong detonation waves treated here, reduced to the analysis of the zero set of an appropriate Evans function. It is therefore accessible to the techniques used in this paper. 
Indeed, as noted above in \S\ref{ssec:stability_evans}, a preliminary Evans-function analysis \cite{LZ_PD04} of weak-detonation solutions of \eqref{eq:mm0}  suggests that a transition to instability, should it occur, must necessarily occur through a Hopf-type bifurcation. In contrast to the case considered here, we note that one challenge that immediately presents itself in the case of weak detonations is the scarcity of such waves. Weak detonations only exist for distinguished parameter values; as described in Remark \ref{rem:weak}, the existence of such a wave corresponds to the structurally unstable intersection of invariant manifolds. Thus, the first step in a stability analysis of weak detonations is the development of a robust system for locating them. For this purpose, the technique of inflating the phase space, as described by Beyn \cite{B_IMAJNA90}, seems to work well. An additional difficulty is related to the failure of  integrated coordinates to remove the eigenvalue at the origin; this is a topic of our current investigation.

\subsubsection{Ignition}
It has long been recognized---at least since the analysis of Roquejoffre \& Vila \cite{RV_AA98}---that the shape and structure of the ignition function $\phi$ plays a significant role in the dynamics and the analysis of these traveling-waves. As just one example, a recent manifestation of this phenomenon appears in the paper of Jung \& Yao \cite{JY_QAM12}; their analysis hinges on the fact that $\phi$ is assumed to be a Heaviside function. It would thus be of interest to better quantify the effect of the form of $\phi$ on the stability properties of these waves. A closely related issue is the effect of the location of the ignition temperature $u_\mathrm{ig}$. 
Also of possible interest is the incorporation of a ``bump-type'' ignition function of the kind proposed by Lyng \& Zumbrun \cite{LZ_PD04}. As a example, one might try
$\phi(u)= \me^{\mathcal{E}_A/T(u)}$, where 
\[
T(u)= (u-1.5)^2- (u_*-1.5)^2\,,
\] 
modeling the temperature-velocity relation from the geometry of the physical system. Here, $u_*$ is the state value for the von Neumann shock. Barker \& Zumbrun \cite{BZ_majda-znd} considered such an ignition function in the nearby setting of the Majda-ZND model, and they were able to find ``square-waves'' in the limit of large activation energy $\mathcal{E}_A$. These well known structures play an important role in the theory of detonation \cite{FickettDavis}. This finding aligns with the claim put forward by Lyng \& Zumbrun \cite{LZ_PD04} that the introduction of a bump allows the scalar model to better mimic the wave structure of \eqref{eq:rns}. Although we do not pursue it here, we conjecture that a similar result (existence of square waves in the high-activation energy limit) holds as well in the viscous setting. See Appendix C of \cite{Z_ARMA11} for further discussion of square waves and the Majda model. 

\subsubsection{Singular limits}
Finally, as one other possibility, we mention that the experiments in Section \ref{ssec:limit} are oriented toward with the singular limits $D\to0$ and $k\to 0$. Thus, the computational approach used here will eventually break down for sufficiently small parameter values. To address the true limiting behavior, a more reasonable approach would be to attack these problems means of an Evans-function analysis involving the formal limiting problem as in \cite{Z_ARMA11}. 


\section*{Acknowledgement.}
We thank Blake Barker for his contributions through work on the concurrent
joint project \cite{BHLZ}, and in the ongoing development of the \textsc{STABLAB} package
with with these computations were carried out.

\appendix
\section{Zero activation energy: details}\label{sec:zeroe}
In this appendix, we provide more details about profiles and the Evans function in the case that $\mathcal{E}_A=0$. This is one of the organizing centers described above. 
Thus, we assume that the ignition function $\phi$ is of Heaviside type
(the formal limit as $\mathcal{E}_A\to 0$ of the Arrhenius version). 
We suppose
\[
\phi(u)=
\begin{cases}
1\,, & \text{if}\;u>u_\mathrm{ig} \\
0\,, & \text{if}\;u<u_\mathrm{ig}
\end{cases}\,.
\]

\subsection{Existence and structure of traveling-wave profiles}
We recall, from \eqref{eq:tw}, that the profile equations are 
\begin{align*}
\hat u'&=B^{-1}\big((\hat u^2/2)-(u^2_\sm/2)-s(\hat u-u_\sm)-q(s\hat z+D\hat y)\big)\,, \\
\hat z'&=\hat y\,,\\
\hat y'&=D^{-1}\big(-s\hat y+k\phi(\hat u)\hat z\big)\,.
\end{align*}
But, in the case that $\phi$ is a step function, the equation for the $\hat z$ component can be written as a second-order piecewise constant-coefficient linear equation. 
\beq
D\hat z''+s\hat z'-k\phi(\hat u)\hat z=0\,.
\eeq
There are two cases: $\hat u<u_\mathrm{ig}$ and $\hat u>u_\mathrm{ig}$. 
\subsubsection{Case 1: Below ignition $(\hat u<u_\mathrm{ig})$}
In this case, $\phi(\hat u)=0$, so the equation for $\hat z$ reduces to 
\beq
(Dz'+sz)'=0\,.
\eeq
Thus, the solution must satisfy 
\(
(\hat z')'=-\ti s\hat z'
\), 
where $\ti s:=sD^{-1}$, so that for some constant $c_0$, $z'$ is given by
\[
\hat z'(x)=\me^{-\ti sx}c_0
\]
Integrating one more time, we find 
\[
z_+-\hat z(x)=
\int_x^{+\infty} \hat z'(\xi)\,\dif\xi
=\int_x^{+\infty}\exp(-\ti s\xi)c_0\,\dif\xi
=-\frac{\me^{-\ti sx}c_0}{-\ti s}\,.
\]
That is,
\beq
\hat z(x)=1-\frac{\me^{-\ti sx}c_0}{\ti s}\,.
\eeq
\begin{remark}
We note that, as required, $\lim_{x\to+\infty}\hat z(x)=1$. However, one curiosity of this solution is that the $\hat z$ profile takes (assuming, as we'll see below, that $c_0>0$) values strictly smaller than 1 in the unburned zone ($u<u_\mathrm{ig}$). This is odd because $\phi$ vanishes in this zone and one expects that there should be no consumption of reactant in this region. 
\end{remark}

\subsubsection{Case 2: The burning tail $(u>u_\mathrm{ig})$}
In this case $\phi(\hat u)\equiv1$, and the $\hat z$-equation is 
\beq
D\hat z''+s\hat z'-k\hat z=0\,.
\eeq
Evidently, the characteristic polynomial is 
\(
D\mu^2+s\mu-k=0
\)
which has roots 
\[
\mu_\spm=\frac{-s\pm\sqrt{s^2+4Dk}}{2D}\,.
\]
In our framework, the parameters $s$, $k$, and $D$ are all positive. In order for there to be a connection, we require $\hat z$ to decay to $0$ as $x\to-\infty$ in the burning zone, so only the positive growth rate, $\mu_\sp$,
is of interest. Thus, the solution in the burned tail takes the form
\(
\hat z(x)=d_0\me^{\mu_\sp x},
\)
where $d_0$ is a constant of integration. The constant $d_0$ needs to be chosen so that the solution matches the right-hand solution.
That is, if $x_\mathrm{ig}$ is the point on the $x$-axis defined by $\hat u(x_\mathrm{ig})=u_\mathrm{ig}$, then $d_0$ needs to be chosen so that 
\beq\label{eq:match-z}
d_0\me^{\mu_\sp x_\mathrm{ig}}=1-\frac{\me^{-\ti sx_\mathrm{ig}}c_0}{\ti s}\,.
\eeq

\begin{remark}\label{rem:xi}
We use translational invariance to take, without loss of generality, $x_\mathrm{ig}=0$.
\end{remark}
Using Remark \ref{rem:xi}, we see immediately that 
\eqref{eq:match-z} reduces to 
\beq\label{eq:match-z2}
d_0=1-\frac{c_0}{\ti s}\,.
\eeq
We also want to match the derivative $\hat z'$ at $x_\mathrm{ig}=0$. This requires 
\beq\label{eq:match-zd}
\mu_\sp d_0=c_0\,.
\eeq
Thus, combining \eqref{eq:match-z2} and \eqref{eq:match-zd}, we find
\beq
\begin{pmatrix} 
1 & \ti s^{-1} \\
\mu_\sp & -1
\end{pmatrix}
\begin{pmatrix} d_0 \\ c_0 \end{pmatrix}
=
\begin{pmatrix}
1 \\
0
\end{pmatrix}
\eeq
The determinant of the coefficient matrix is $-1-\mu_\sp\ti s^{-1}\neq 0$, whence we obtain
\[
d_0=\frac{1}{1+\mu_\sp\ti s^{-1}}\,,\quad
c_0=\frac{\mu_\sp}{1+\mu_\sp\ti s^{-1}}\,.
\]

\subsubsection{The equation for $\hat u$}
The equation for $\hat u$ now takes the form of the viscous profile equation for the Burgers equation with a piecewise defined forcing term:
\beq\label{eq:u}
B\hat u'=\left(\frac{\hat u^2}{2}-s\hat u\right)-\left(\frac{u_\sm^2}{2}-su_\sm\right)+
\begin{cases}
-qd_0\me^{\mu_\sp x}(s+D\mu_\sp) & x<x_\mathrm{ig} \\
-qs & x>x_\mathrm{ig}
\end{cases}\,.
\eeq

We observe that 
\(
d_0(s+D\mu_\sp)=s
\),
and thus there is no discontinuity in the forcing term at $x_\mathrm{ig}=0$ on the right-hand side in equation \eqref{eq:u}, and we do not expect $\hat u$ to have a kink at $x_\mathrm{ig}$. That is, $u'(x_\mathrm{ig}+)= u'(x_\mathrm{ig}-)$. This is consistent with the fact that we can alternatively write the profile equation for $\hat u$ as 
\[
B\hat u'=\left(\frac{\hat u^2}{2}-s\hat u\right)-\left(\frac{u_\sm^2}{2}-su_\sm\right)-q(s\hat z+D\hat z')\,,
\]
and we note that for $z\in C^1(\RR)$, as shown above, the right-hand side of the above ODE is perfectly continuous.

\subsubsection{Numerics for profile}
Since the form of $\hat z$ is given explicitly, it is a simple matter to compute the profile $\hat u$ simply by integrating \eqref{eq:u} forward and backward in $x$ from $x=0$. Figure \ref{fig:profileA} shows three profiles computed in this way. In the figure, the value of $q$ has been tuned to show the ``bench'' phenomenon at height $u_\sm^\mathrm{w}$. 
\begin{figure}[ht] %
   \centering
   \begin{tabular}{ccc}
   \includegraphics[width=5cm]{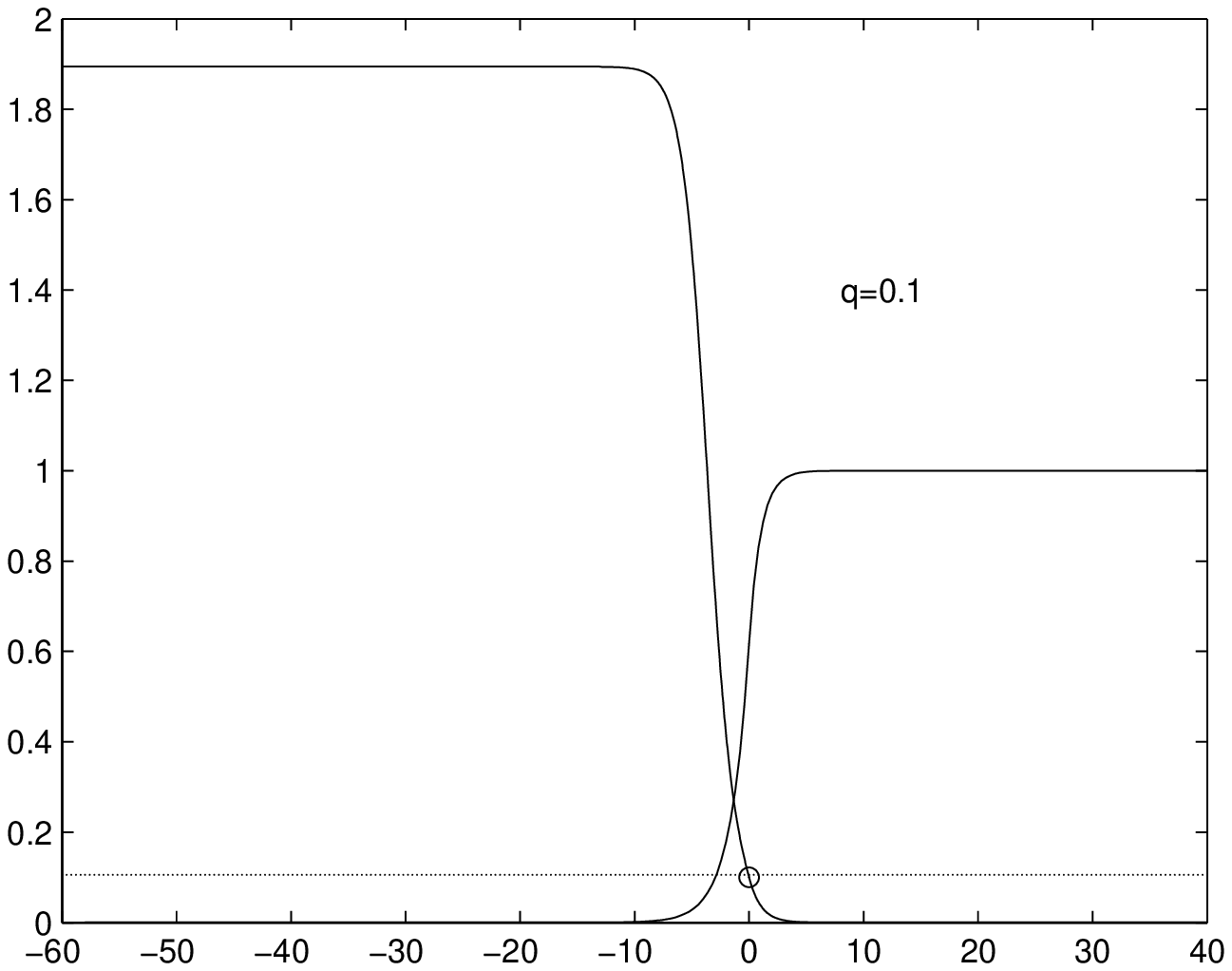} &
   \includegraphics[width=5cm]{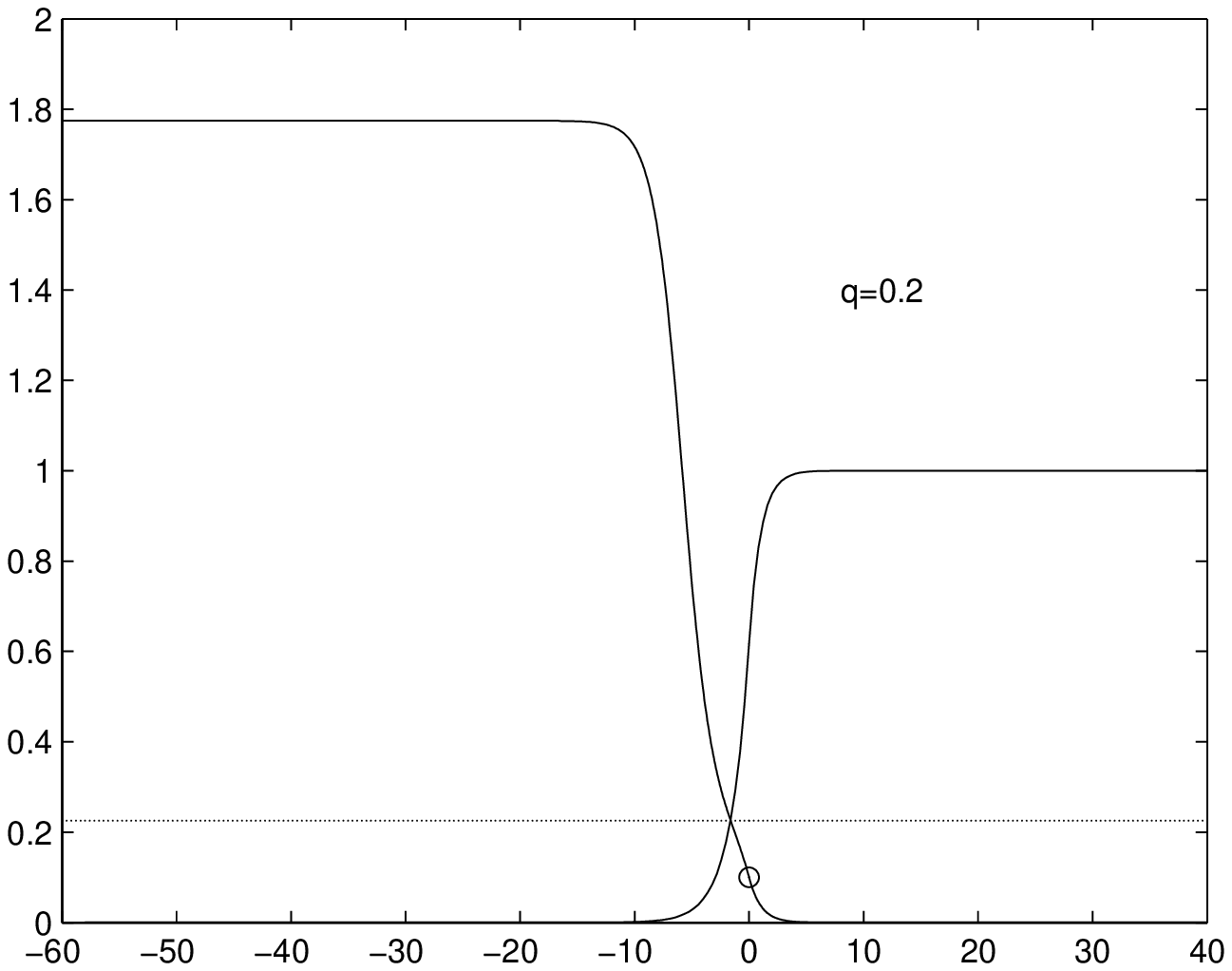} &
   \includegraphics[width=5cm]{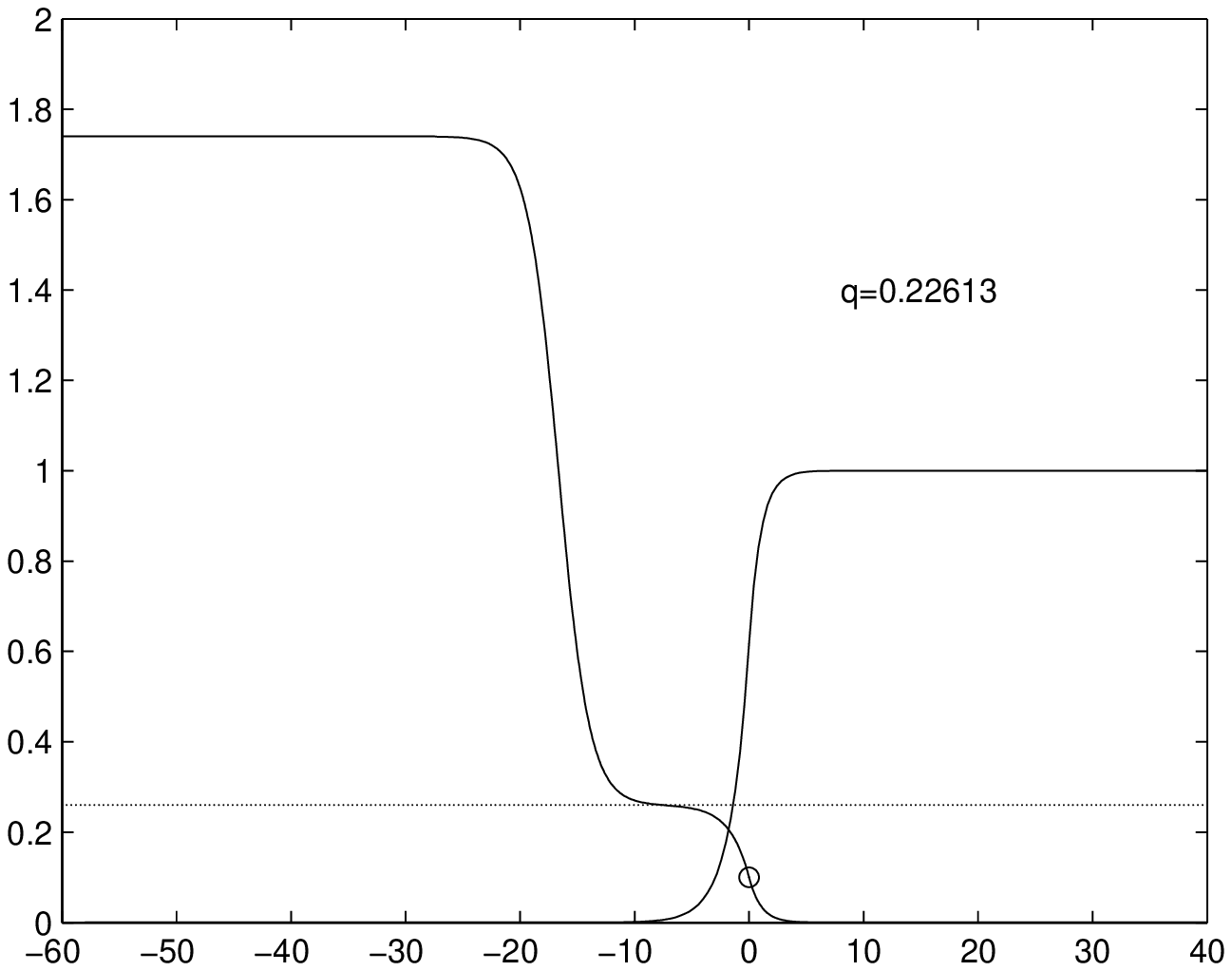} \\
   \textbf{(a)} & \textbf{(b)} & \textbf{(c)}
   \end{tabular}
   \caption{``Benching'': Solutions $\hat u$ of \eqref{eq:u} plotted together with the explicitly known functions $\hat z$ for three different values of $q$. As in Figure \ref{fig:profiles}, the two curves can be distinguished by their boundary behavior. The function $\hat z$ tends to unity on the right and to zero on the left. In all three panels, $u_\sp=0$, $u_\mathrm{ig}=0.1$, $k=D=1$. The value of $u_\sm^\mathrm{w}$ is marked by the faint dotted horizontal line. The initial condition at $x_\mathrm{ig}=0$ is marked by a small circle. The $q$ values are \textbf{(a)} $q=0.1$, \textbf{(b)} $q=0.2$, and \textbf{(c)} $q=0.226125505$. }
   \label{fig:profileA}
\end{figure}

\subsection{Evans Function}
%
\subsubsection{First-order formulation}
Since $\phi$ is a Heaviside function, linearization about the steady solution $(\hat u,\hat z)$ introduces a Dirac $\delta$ into the equations. 
We see immediately that the linearized system can be written as 
\begin{subequations}
\label{eq:majda-lin2}
\begin{align}
\lam u - su' + \left(\hat u u\right)' &= Bu'' + q k H_{u_{\mathrm{ig}}}(\hat u) z+qk\delta_{u_{\mathrm{ig}}}(\hat u)u\hat z\,,\label{majda-lin:a2}\\
\lam z - sz' &= D z'' - k H_{u_{\mathrm{ig}}}(\hat u) z-k\delta_{u_{\mathrm{ig}}}(\hat u)u\hat z. \label{majda-lin:b2}
\end{align}
\end{subequations}
We define $w':=u+qz$ so that \eqref{majda-lin:a2} can be written as 
\begin{equation}
Bu''=\lambda w'-qz' -qDz''+((\hat u-1) u)'.\label{eq:int1}
\end{equation}
which can be integrated so that the eigenvalue equation becomes
\begin{subequations}\label{eq:intevalsystemA}
\begin{align}
u'&=B^{-1}\big(\lambda w-qz-qDz'+(\hat u-s) u\big)\,, \\
w'&=u+qz\,,\\
z''&=D^{-1}\big(\lambda z-z'+k\delta_{u_{\mathrm{ig}}}(\hat u)u\hat z+kH_{u_{\mathrm{ig}}}(\hat u)z\big)\,.
\end{align}
\end{subequations}
In matrix form with unknown $X:=(U,Z)^\tr=(u,w,z,z')^\tr$, \eqref{eq:intevalsystem} takes the form 
\beq
\label{eq:int_eval_odeA}
X'=\mathbb{B}(x;\lambda) X
\eeq
with
\begin{align}
\mathbb{B}(x;\lambda)&:=\left[\begin{array}{cc|cc}
B^{-1}(\hat u-s) & \lambda B^{-1} & -qB^{-1} & -qB^{-1}D \\
1 & 0 & q & 0 \\ \hline
0 & 0 &0 & 1\\
D^{-1}k\delta_{u_{\mathrm{ig}}}(\hat u)\hat z & 0 & D^{-1}(\lambda+kH_{u_{\mathrm{ig}}}(\hat u)) & -D^{-1}
\end{array}\right] \nonumber \\
&=:\left[\begin{array}{c|c}
\mat{A}(x;\lam) & \mat{B}(x;\lam) \\ \hline
\mat{C}(x;\lam) & \mat{D}(x;\lam)
\end{array}
\right]\,.
\end{align}
The limiting, constant-coefficient system $X'=\mathbb{B}_\spm(\lambda)X$ is also of interest, and we see that the limiting blocks $\mat{C}_\spm(\lam)$ are both zero. Thus, the limiting systems are upper block-triangular.  However, this feature can be found also in the limiting systems associated with the coefficient \eqref{eq:bmatrix} above. What is remarkable in the current scenario is that, that if $\hat u$ satisfies $\hat u(x)>u_{\mathrm{ig}}$ for $x<x_\mathrm{ig}=0$ and $\hat u(x)<u_{\mathrm{ig}}$ for $x>x_\mathrm{ig}=0$, then the coefficient matrix $\mathbb{B}$ simplifies considerably on each half line. That is, the coupling due to the lower left-hand block $\mat{C}$ is concentrated in a single point. Moreover, the lower right-hand block $\mat{D}$ is constant on each half line. Thus, in this case, eigenvectors corresponding to the eigenvalues from $\mat{D}$ can be computed explicitly, and other decaying solutions come from a forced linear system of the form $U'=\mat{A}U+\mat{B}Z$. Finally, in the construction of the Evans function, the solutions must be ``matched'' to account for the point source at the origin. 

\begin{remark}
All the profiles $\hat u$ that we have computed satisfy the above inequalities. We note also that even more structure is available in the coefficient matrix in the original ``unintegrated'' coordinates. In that case the system becomes block diagonal on the right, whence the construction of decaying solutions at $x=+\infty$ decouples into pure ``fluid'' and ``reaction'' modes. However, in either case, we are not quite able to get our hands on a useful analytic expression for the whole Evans function. Therefore, we compute values of $E(\lam)$ using \textsc{STABLAB} as above. Because of this, we omit the detailed (but partial and ultimately not-so-useful) calculations that come from a careful analysis of the matrix $\mathbb{B}$ and its blocks. 
\end{remark}

\subsubsection{Coupling at $x=0$ and computation of $E(\lambda)$}
To compute $E$, we must account for the coupling at the origin. 
Basically, we have an ODE of the form
\[
X'=\delta(x)M(x)X\,,\quad X(0^\sp)=X_0\,,
\]
with $M$ continuous. The basic task is to determine $X(0^\sm)$ so that an Evans function can be formed. One can visualize this process as pulling solutions decaying at $+\infty$ across the point source at the origin so that they can be used in an Evans function (together with a basis of solutions decaying at $-\infty$).  
We observe that 
\begin{align*}
X(0^\sm)& =\me^{\int_{0^\sp}^{0^\sm}\delta(x)M(x)\,\dif x}X(0^\sp) \\
	&=\me^{-M(0)}X_0
\end{align*}
%
\begin{remark}
We have used that 
\[
M(x)=M(0)+O(|x|) \quad\text{for $x$ small}\,,
\]
so that we can exponentiate a constant matrix in the ODE calculation above. 
\end{remark}
%
Thus, we define 
\[
\calZ=\me^{-M(0)}X
\] 
and compute that 
\beq
\calZ'=\me^{-M(0)}\mathbb{B}(\me^{-M(0)})^{-1}\calZ\,.
\eeq
This gives a piecewise defined coefficient matrix
\beq
\tilde{\mathbb{B}}=\me^{-M(0)}\mathbb{B}(\me^{-M(0)})^{-1}
\eeq
that can easily be used in the standard implementation of \textsc{STABLAB}; see Figure \ref{fig:E0}.
\begin{figure}[ht]
\begin{center}
$\begin{array}{cc}
\includegraphics[width=7cm]{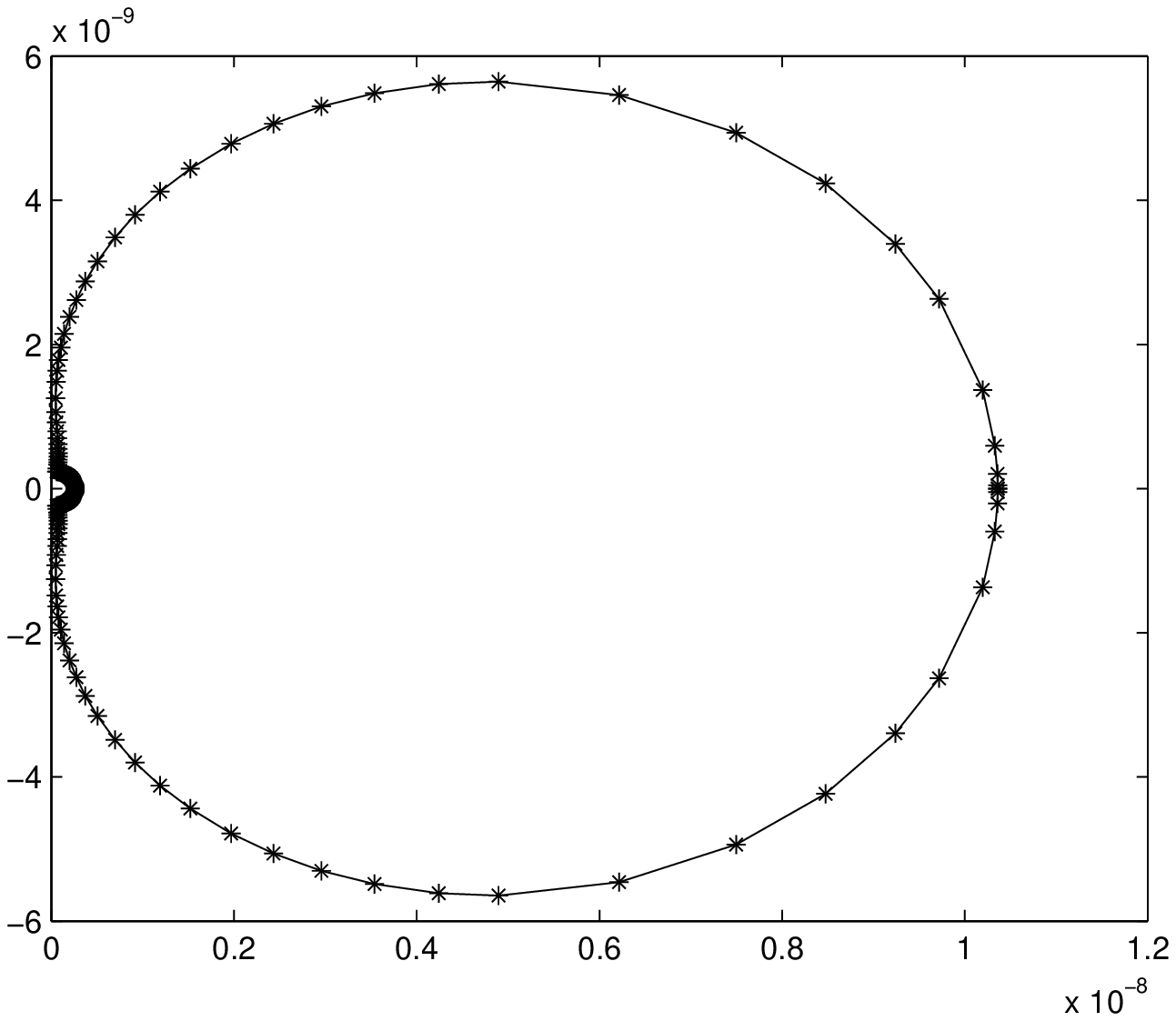} & \includegraphics[width=7cm]{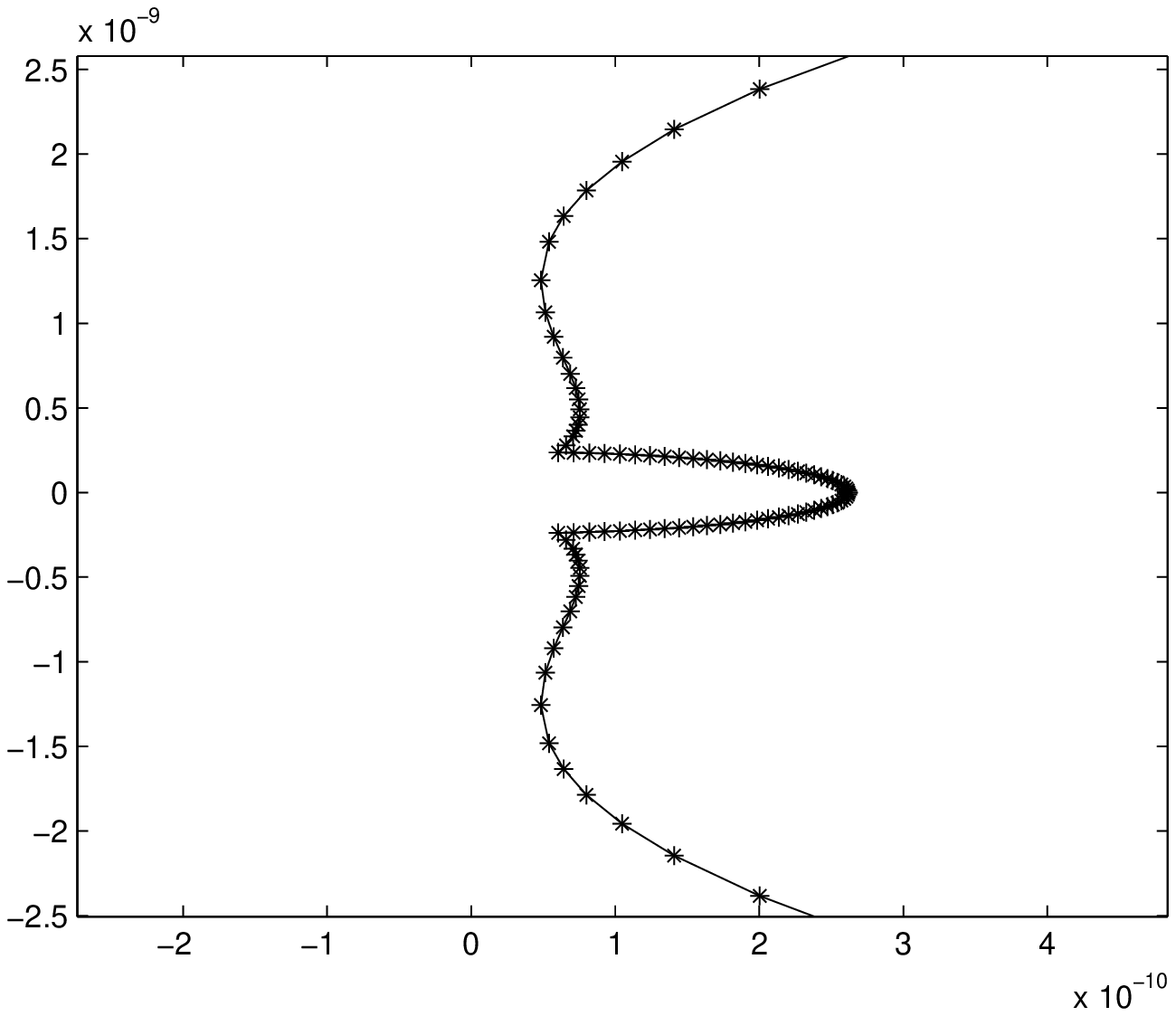} \\
\mbox{\bf (a)} & \mbox{\bf (b)}
\end{array}$
\end{center}
\caption{Sample Evans output for $\mathcal{E}_A=0$. \textbf{(a)}: the unscaled Evans-function output of a semicircle of radius $10$ computed using standard \textsc{STABLAB} routines for the profile with $q=0.1$ shown in Figure \ref{fig:profileA}\textbf{(a)}; \textbf{(b)}: a magnification of the origin.}
\label{fig:E0}
\end{figure}

\section{Parameter Values}\label{sec:parameter}

For the comprehensive study, we explored, with some exceptions, the following parameters:
\begin{align*}
(\mathcal{E}_A,k,D,u_+,u_\mathrm{ig})&\in\{0.01, 0.05, 0.1, 0.125, 0.25, 0.5, 1, 2, 4\}\\
&\quad\times \{0.125, 0.25, 0.5, 1, 2, 4\} \times \{0.0625, 0.125, 0.25, 0.5, 1, 2, 4\}\\
&\quad\times \{0, 0.1, 0.2, 0.3, 0.4, 0.6\} \times \{0.01, 0.1, 0.2\},
\end{align*}
with $q$ varying between zero and $q_{\mathrm{max}}$, which is determined by \eqref{eq:uq}.  For example for $u_+=0$, we used the following $25$ values
\begin{equation}
\label{eqn:q_in_large_E}
\begin{split}
q&\in\{0.001, 0.005, 0.01, 0.025, 0.05, 0.1, 0.125, 0.2, 0.25, 0.3, 0.32,\\
&\qquad 0.37,0.40,0.425,0.44,0.45,0.46,0.47,0.48,0.485,0.49,0.492,0.494,0.496,0.499\}
\end{split}
\end{equation}
and chose a similar range for the other values of $u_\sp$.  We note that roughly $2.7\%$ of the above parameter combinations yielded profiles with very slow exponential decay, and thus we not practical for computation.  This occurs, for example, when $k$ is small, $\mathcal{E}_A$ is large, and $q$ is close to $q_{\mathrm{max}}$ (all three).

\begin{remark}
We note that the impractical parameter combinations described above correspond to the inviscid limit.  The stability of this limiting case was examined in \cites{Z_ARMA11,BZ_majda-znd}.
\end{remark}

For the high activation energy study, we set $k=\exp(\mathcal{E}_A/2)$ for
\[
\mathcal{E}_A = \{1,2,4,6,8,10,12,14,16,18,20,22,24,26,28,30,32,34,36,38,40,42,44\}.
\]
The other parameters were set to $D=1$, $u_+=0$, and $u_{ig}=0.1$, with $q$ in \eqref{eqn:q_in_large_E}.

\begin{bibdiv}
\begin{biblist}
\bib{AB_NM02}{article}{
   author={Allen, Leanne},
   author={Bridges, Thomas J.},
   title={Numerical exterior algebra and the compound matrix method},
   journal={Numer. Math.},
   volume={92},
   date={2002},
   number={2},
   pages={197--232},
   issn={0029-599X},
}

\bib{AGJ_JRAM90}{article}{
   author={Alexander, J.},
   author={Gardner, R.},
   author={Jones, C.},
   title={A topological invariant arising in the stability analysis of
   travelling waves},
   journal={J. Reine Angew. Math.},
   volume={410},
   date={1990},
   pages={167--212},
   issn={0075-4102},
}
\bib{AS_NW95}{article}{
   author={Alexander, J. C.},
   author={Sachs, R.},
   title={Linear instability of solitary waves of a Boussinesq-type
   equation: a computer assisted computation},
   journal={Nonlinear World},
   volume={2},
   date={1995},
   number={4},
   pages={471--507},
   issn={0942-5608},
}

\bib{STABLAB}{misc}{
title={STABLAB: A MATLAB-Based Numerical Library for Evans Function Computation},
author={Barker, Blake},
author={Humpherys, Jeffrey},
author={Zumbrun, Kevin},
year={2009},
}

\bib{BHLZ}{misc}{
	title={Stability of Viscous Strong Detonation Waves},
	author={Barker, Blake},
	author={Humpherys, Jeffrey},
	author={Lyng, Gregory},
	author={Zumbrun, Kevin},
	status={in preparation},
	date={2012},
	}

\bib{BZ_majda-znd}{misc}{
	title={A Numerical stability investigation of strong ZND detonations for Majda's model},
	author={Barker, Blake},
	author={Zumbrun, Kevin},
	status={preprint},
	date={2010},
	note={{\tt arXiv:1011.1561v1}},
	}
	
\bib{BS_ARFM07}{article}{
	author={Bdzil, John B.},
	author={Stewart, D. Scott},
	title={The Dynamics of Detonation in Explosive Systems},
	journal={Annual Rev. Fluid Mech.},
	volume={39},
	pages={263--292},
	date={2007},
	}
\bib{B_IMAJNA90}{article}{
   author={Beyn, W.-J.},
   title={The numerical computation of connecting orbits in dynamical
   systems},
   journal={IMA J. Numer. Anal.},
   volume={10},
   date={1990},
   number={3},
   pages={379--405},
   issn={0272-4979},
}
\bib{BM_PT95}{article}{
     title = {Theoretical and Numerical Structure of Unstable Detonations},
     author = {Bourlioux, Anne},
     author = {Majda, Andrew J.},
     journal = {Philosophical Transactions: Physical Sciences and Engineering},
     volume = {350},
     number = {1692},
     pages = {29--68},
     url = {http://www.jstor.org/stable/54508},
     ISSN = {09628428},
     language = {},
     year = {1995},
     publisher = {The Royal Society},    
    }
\bib{BDG_PD02}{article}{
   author={Bridges, Thomas J.},
   author={Derks, Gianne},
   author={Gottwald, Georg},
   title={Stability and instability of solitary waves of the fifth-order KdV
   equation: a numerical framework},
   journal={Phys. D},
   volume={172},
   date={2002},
   number={1-4},
   pages={190--216},
   issn={0167-2789},
}
\bib{B_MC01}{article}{
   author={Brin, Leon Q.},
   title={Numerical testing of the stability of viscous shock waves},
   journal={Math. Comp.},
   volume={70},
   date={2001},
   number={235},
   pages={1071--1088},
   issn={0025-5718},
}
\bib{BZ_MC02}{article}{
   author={Brin, Leon Q.},
   author={Zumbrun, Kevin},
   title={Analytically varying eigenvectors and the stability of viscous
   shock waves},
   note={Seventh Workshop on Partial Differential Equations, Part I (Rio de
   Janeiro, 2001)},
   journal={Mat. Contemp.},
   volume={22},
   date={2002},
   pages={19--32},
   issn={0103-9059},
}

\bib{BN_PF88}{article}{
   author={Buckmaster, J},
   author={Neves, J.},
   title={One-dimensional detonation stability: the spectrum for infinite activation energy},
   journal={Phys. Fluids},
   volume={31},
   date={1988},
   pages={3571--3576},
}

\bib{CHT_ARMA03}{article}{
   author={Chen, Gui-Qiang},
   author={Hoff, David},
   author={Trivisa, Konstantina},
   title={Global solutions to a model for exothermically reacting,
   compressible flows with large discontinuous initial data},
   journal={Arch. Ration. Mech. Anal.},
   volume={166},
   date={2003},
   number={4},
   pages={321--358},
   issn={0003-9527},
}	

\bib{CMR_SIAMJSSC86}{article}{
   author={Colella, Phillip},
   author={Majda, Andrew},
   author={Roytburd, Victor},
   title={Theoretical and numerical structure for reacting shock waves},
   journal={SIAM J. Sci. Statist. Comput.},
   volume={7},
   date={1986},
   number={4},
   pages={1059--1080},
   issn={0196-5204},
}

\bib{CourantFriedrichs}{book}{
   author={Courant, R.},
   author={Friedrichs, K. O.},
   title={Supersonic flow and shock waves},
   note={Reprinting of the 1948 original;
   Applied Mathematical Sciences, Vol. 21},
   publisher={Springer-Verlag},
   place={New York},
   date={1976},
   pages={xvi+464},
}

\bib{E_PF62}{article}{
   author={Erpenbeck, Jerome J.},
   title={Stability of Steady-State Equilibrium Detonations},
   journal={Phys. Fluids},
   volume={5},
   date={1962},
   pages={604--614},
   issn={0031-9171},
}

\bib{EF_MB77}{article}{
	author ={Evans, J. W.},
	author={Feroe, J. A.},
	title={Traveling waves of infinitely many pulses in nerve equations},
	journal={Math. Biosci.},
	volume={37},
	year={1977},
	pages={23--50},
}

\bib{F_AJP79}{article}{
	author={Fickett, Wildon},
	title={Detonation in miniature},
	journal={Am. J. Phys.},
	volume={47},
	date={1979},
	number={12},
	pages={1050--1059},
}
\bib{FickettDavis}{book}{
	author={Fickett, Wildon},
	author={Davis, William},
	title={Detonation: Theory and Experiment},
	publisher={Dover},
	year={2000},
	note={corrected reprint of 1979 UC Berkeley Edition},
}

\bib{G_TAMS83}{article}{
   author={Gardner, Robert A.},
   title={On the detonation of a combustible gas},
   journal={Trans. Amer. Math. Soc.},
   volume={277},
   date={1983},
   number={2},
   pages={431--468},
   issn={0002-9947},
}
\bib{GZ_CPAM98}{article}{
   author={Gardner, Robert A.},
   author={Zumbrun, Kevin},
   title={The gap lemma and geometric criteria for instability of viscous
   shock profiles},
   journal={Comm. Pure Appl. Math.},
   volume={51},
   date={1998},
   number={7},
   pages={797--855},
   issn={0010-3640},
}

\bib{bvp6c}{report}{
  author={Nicholas Hale},
  author={Daniel R. Moore},
  title={A Sixth-Order Extension to the MATLAB Package bvp4c of {J}. {K}ierzenka and {L}. {S}hampine},
  date={2008},
  institution={Oxford University Computing Laboratory},
  number={NA-08/04},
  url ={http://www.comlab.ox.ac.uk/nick.hale/bvp6c/bvp6c_technical_report.pdf},
}

\bib{Henrici}{book}{
   author={Henrici, Peter},
   title={Applied and computational complex analysis. Vol. 1},
   series={Wiley Classics Library},
   note={Power series---integration---conformal mapping---location of zeros;
   Reprint of the 1974 original;
   A Wiley-Interscience Publication},
   publisher={John Wiley \& Sons Inc.},
   place={New York},
   date={1988},
   pages={xviii+682},
   isbn={0-471-60841-6},
}
\bib{HLZ_ARMA09}{article}{
   author={Humpherys, Jeffrey},
   author={Lyng, Gregory},
   author={Zumbrun, Kevin},
   title={Spectral stability of ideal-gas shock layers},
   journal={Arch. Ration. Mech. Anal.},
   volume={194},
   date={2009},
   number={3},
   pages={1029--1079},
   issn={0003-9527},
}

\bib{HSZ_NM06}{article}{
   author={Humpherys, Jeffrey},
   author={Sandstede, Bj{\"o}rn},
   author={Zumbrun, Kevin},
   title={Efficient computation of analytic bases in Evans function analysis
   of large systems},
   journal={Numer. Math.},
   volume={103},
   date={2006},
   number={4},
   pages={631--642},
   issn={0029-599X},
}

\bib{HZ_PD06}{article}{
   author={Humpherys, Jeffrey},
   author={Zumbrun, Kevin},
   title={An efficient shooting algorithm for Evans function calculations in
   large systems},
   journal={Phys. D},
   volume={220},
   date={2006},
   number={2},
   pages={116--126},
   issn={0167-2789},
}

\bib{HZ_QAM}{article}{
   author={Humpherys, Jeffrey},
   author={Zumbrun, Kevin},
   title={Efficient numerical stability analysis of detonation waves in ZND},
   journal={Quart. Appl. Math.},
   volume={70},
   date={2012},
   pages={685--703},
}

\bib{JLW_IUMJ05}{article}{
   author={Jenssen, Helge Kristian},
   author={Lyng, Gregory},
   author={Williams, Mark},
   title={Equivalence of low-frequency stability conditions for
   multidimensional detonations in three models of combustion},
   journal={Indiana Univ. Math. J.},
   volume={54},
   date={2005},
   number={1},
   pages={1--64},
   issn={0022-2518},
}

\bib{JY_QAM12}{article}{
   author={Jung, Soyeun},
   author={Yao, Jinghua},
   title={Stability of ZND detonations for Majda's model},
   journal={Quart. Appl. Math.},
   volume={70},
   date={2012},
   number={1},
   pages={69--76},
   issn={0033-569X},
}

\bib{KFR}{misc}{
	author={Kasimov, Aslan},
	author={Faria, Luiz},
	author={Rosales, Rodolfo},
	title={A model for shock wave chaos},
	note={preprint: \texttt{ArXiv:1202:2989}},
	date={2012},
	}

\bib{Kato}{book}{
   author={Kato, Tosio},
   title={Perturbation theory for linear operators},
   series={Classics in Mathematics},
   note={Reprint of the 1980 edition},
   publisher={Springer-Verlag},
   place={Berlin},
   date={1995},
   pages={xxii+619},
   isbn={3-540-58661-X},
}

\bib{L_NA85}{article}{
   author={Larrouturou, Bernard},
   title={Remarks on a model for combustion waves},
   journal={Nonlinear Anal.},
   volume={9},
   date={1985},
   number={9},
   pages={905--935},
   issn={0362-546X},
}

\bib{LS}{article}{
   author={Lee, H. I.},
   author={Stewart, D. S.},
   title= {Calculation of linear detonation instability: one-dimensional
  instability of plane detonation},
   journal= {J. Fluid Mech.},
   volume={216},
   date={1990},
   pages={103--132},
}
\bib{L_CPDE92}{article}{
   author={Levy, Arnon},
   title={On Majda's model for dynamic combustion},
   journal={Comm. Partial Differential Equations},
   volume={17},
   date={1992},
   number={3-4},
   pages={657--698},
   issn={0360-5302},
}

\bib{LLT_JMAA96}{article}{
   author={Li, Dening},
   author={Liu, Tai-Ping},
   author={Tan, Dechun},
   title={Stability of strong detonation travelling waves to combustion
   model},
   journal={J. Math. Anal. Appl.},
   volume={201},
   date={1996},
   number={2},
   pages={516--531},
   issn={0022-247X},
}
\bib{LZ_SIAMJMA02}{article}{
   author={Li, Jiequan},
   author={Zhang, Peng},
   title={The transition from Zeldovich-von Neumann-Doring to
   Chapman-Jouguet theories for a nonconvex scalar combustion model},
   journal={SIAM J. Math. Anal.},
   volume={34},
   date={2002},
   number={3},
   pages={675--699 (electronic)},
   issn={0036-1410},
}

\bib{LY_SIAMJMA95}{article}{
   author={Liu, Tai-Ping},
   author={Ying, Long An},
   title={Nonlinear stability of strong detonations for a viscous combustion
   model},
   journal={SIAM J. Math. Anal.},
   volume={26},
   date={1995},
   number={3},
   pages={519--528},
   issn={0036-1410},
}

\bib{LY_CMP99}{article}{
   author={Liu, Tai-Ping},
   author={Yu, Shih-Hsien},
   title={Nonlinear stability of weak detonation waves for a combustion
   model},
   journal={Comm. Math. Phys.},
   volume={204},
   date={1999},
   number={3},
   pages={551--586},
   issn={0010-3616},
}
\bib{LZ_ARMA91}{article}{
   author={Liu, Tai-Ping},
   author={Zhang, Tong},
   title={A scalar combustion model},
   journal={Arch. Ration. Mech. Anal.},
   volume={114},
   date={1991},
   number={4},
   pages={297--312},
   issn={0003-9527},
}

\bib{LD_IMAJAM92}{article}{
   author={Logan, J. David},
   author={Dunbar, Steven R.},
   title={Travelling waves in model reacting flows with reversible kinetics},
   journal={IMA J. Appl. Math.},
   volume={49},
   date={1992},
   number={2},
   pages={103--121},
   issn={0272-4960},
}
\bib{LRTZ_JDE07}{article}{
   author={Lyng, Gregory},
   author={Raoofi, Mohammadreza},
   author={Texier, Benjamin},
   author={Zumbrun, Kevin},
   title={Pointwise Green function bounds and stability of combustion waves},
   journal={J. Differential Equations},
   volume={233},
   date={2007},
   number={2},
   pages={654--698},
   issn={0022-0396},
}

\bib{LZ_PD04}{article}{
   author={Lyng, Gregory},
   author={Zumbrun, Kevin},
   title={A stability index for detonation waves in Majda's model for
   reacting flow},
   journal={Phys. D},
   volume={194},
   date={2004},
   number={1-2},
   pages={1--29},
   issn={0167-2789},
}
\bib{LZ_ARMA04}{article}{
   author={Lyng, Gregory},
   author={Zumbrun, Kevin},
   title={One-dimensional stability of viscous strong detonation waves},
   journal={Arch. Ration. Mech. Anal.},
   volume={173},
   date={2004},
   number={2},
   pages={213--277},
   issn={0003-9527},
}

\bib{M_SIAMJAM81}{article}{
   author={Majda, Andrew},
   title={A qualitative model for dynamic combustion},
   journal={SIAM J. Appl. Math.},
   volume={41},
   date={1981},
   number={1},
   pages={70--93},
   issn={0036-1399},
}
\bib{M_book}{book}{
   author={Majda, A.},
   title={Compressible fluid flow and systems of conservation laws in
   several space variables},
   series={Applied Mathematical Sciences},
   volume={53},
   publisher={Springer-Verlag},
   place={New York},
   date={1984},
   pages={viii+159},
   isbn={0-387-96037-6},
}

\bib{PSW_PD93}{article}{
   author={Pego, Robert L.},
   author={Smereka, Peter},
   author={Weinstein, Michael I.},
   title={Oscillatory instability of traveling waves for a KdV-Burgers
   equation},
   journal={Phys. D},
   volume={67},
   date={1993},
   number={1-3},
   pages={45--65},
   issn={0167-2789},
}
\bib{PW_PTRSL92}{article}{
   author={Pego, Robert L.},
   author={Weinstein, Michael I.},
   title={Eigenvalues, and instabilities of solitary waves},
   journal={Philos. Trans. Roy. Soc. London Ser. A},
   volume={340},
   date={1992},
   number={1656},
   pages={47--94},
   issn={0962-8428},
}

\bib{RT_PRL11}{article}{
	author={Radulescu, M. I.},
	author={Tang, J.},
	title={Nonlinear dynamics of self-sustained supersonic reaction waves: Fickett's detonation analogue},
	journal={Phys. Rev. Lett.},
	date={2011},
	number={16},
	volume={107},
	pages={164503},
	}
\bib{R_JMAA04}{article}{
   author={Razani, Abdolrahman},
   title={Existence of Chapman-Jouguet detonation for a viscous combustion
   model},
   journal={J. Math. Anal. Appl.},
   volume={293},
   date={2004},
   number={2},
   pages={551--563},
   issn={0022-247X},
}

\bib{RAP_JFM12}{article}{
	author={Romick, C. M.},
	author={Aslam, T. D.},
	author={Powers, J. M.},
	title={The effect of diffusion on the dynamics of unsteady detonations},
	journal={J. Fluid Mech.},
	volume={699},
	pages={453--464},
	date={2012}
	}
\bib{RV_AA98}{article}{
   author={Roquejoffre, Jean-Michel},
   author={Vila, Jean-Paul},
   title={Stability of ZND detonation waves in the Majda combustion model},
   journal={Asymptot. Anal.},
   volume={18},
   date={1998},
   number={3-4},
   pages={329--348},
   issn={0921-7134},
}	
\bib{RM_SIAMJAM83}{article}{
   author={Rosales, Rodolfo R.},
   author={Majda, Andrew},
   title={Weakly nonlinear detonation waves},
   journal={SIAM J. Appl. Math.},
   volume={43},
   date={1983},
   number={5},
   pages={1086--1118},
   issn={0036-1399},
}
\bib{S_HDS02}{article}{
   author={Sandstede, Bj{\"o}rn},
   title={Stability of travelling waves},
   conference={
      title={Handbook of dynamical systems, Vol. 2},
   },
   book={
      publisher={North-Holland},
      place={Amsterdam},
   },
   date={2002},
   pages={983--1055},
}

\bib{S_AM76}{article}{
   author={Sattinger, D. H.},
   title={On the stability of waves of nonlinear parabolic systems},
   journal={Advances in Math.},
   volume={22},
   date={1976},
   number={3},
   pages={312--355},
   issn={0001-8708},
}
\bib{ST_JDE94}{article}{
   author={Sheng, Wan Cheng},
   author={Tan, De Chun},
   title={Weak deflagration solutions to the simplest combustion model},
   journal={J. Differential Equations},
   volume={107},
   date={1994},
   number={2},
   pages={207--230},
   issn={0022-0396},
}

\bib{SK_JPP06}{article}{
	author={Stewart, D. S.},
	author={Kasimov, A. R.},
	title={On the State of Detonation Stability Theory and Its Application to Propulsion}, 
	journal={J. of Propulsion and Power},
	volume={22},
	number={6},
	pages={1230--1244},
	date={2006}
	}
\bib{S_CMP99}{article}{
   author={Szepessy, Anders},
   title={Dynamics and stability of a weak detonation wave},
   journal={Comm. Math. Phys.},
   volume={202},
   date={1999},
   number={3},
   pages={547--569},
   issn={0010-3616},
}

\bib{TZ_MAA05}{article}{
   author={Texier, Benjamin},
   author={Zumbrun, Kevin},
   title={Relative Poincar\'e-Hopf bifurcation and galloping instability of
   traveling waves},
   journal={Methods Appl. Anal.},
   volume={12},
   date={2005},
   number={4},
   pages={349--380},
   issn={1073-2772},
}
\bib{TZ_ARMA08}{article}{
   author={Texier, Benjamin},
   author={Zumbrun, Kevin},
   title={Hopf bifurcation of viscous shock waves in compressible gas
   dynamics and MHD},
   journal={Arch. Ration. Mech. Anal.},
   volume={190},
   date={2008},
   number={1},
   pages={107--140},
   issn={0003-9527},
}
\bib{TZ_PD08}{article}{
   author={Texier, Benjamin},
   author={Zumbrun, Kevin},
   title={Galloping instability of viscous shock waves},
   journal={Phys. D},
   volume={237},
   date={2008},
   number={10-12},
   pages={1553--1601},
   issn={0167-2789},
}
\bib{TZ_gallop}{article}{
   author={Texier, Benjamin},
   author={Zumbrun, Kevin},
   title={Transition to longitudinal instability of detonation waves is
   generically associated with Hopf bifurcation to time-periodic galloping
   solutions},
   journal={Comm. Math. Phys.},
   volume={302},
   date={2011},
   number={1},
   pages={1--51},
   issn={0010-3616},
}

\bib{Williams}{book}{
	author={Williams, Forman A.},
	title={Combustion Theory},
	edition={2},
	publisher={Westview Press},
	year={1985},
}
\bib{YYZ_JJIAM99}{article}{
   author={Ying, Lung-an},
   author={Yang, Tong},
   author={Zhu, Changjiang},
   title={The rate of asymptotic convergence of strong detonations for a
   model problem},
   journal={Japan J. Indust. Appl. Math.},
   volume={16},
   date={1999},
   number={3},
   pages={467--487},
   issn={0916-7005},
}

\bib{YYZ_JDE99}{article}{
   author={Ying, Lung-an},
   author={Yang, Tong},
   author={Zhu, Changjiang},
   title={Nonlinear stability of strong detonation waves for a dissipative
   model},
   journal={J. Differential Equations},
   volume={151},
   date={1999},
   number={1},
   pages={134--160},
   issn={0022-0396},
}
\bib{Y_MC04}{article}{
   author={Ying, Lung-An},
   title={Finite difference method for a combustion model},
   journal={Math. Comp.},
   volume={73},
   date={2004},
   number={246},
   pages={595--611 (electronic)},
   issn={0025-5718},
}

\bib{YT_ATA84}{article}{
   author={Ying, Lung An},
   author={Teng, Zhen Huan},
   title={Riemann problem for a reacting and convection hyperbolic system},
   journal={Approx. Theory Appl.},
   volume={1},
   date={1984},
   number={1},
   pages={95--122},
   issn={1000-9221},
}

\bib{ZY_JCM05}{article}{
   author={Zhang, Xin-ting},
   author={Ying, Lung-an},
   title={Dependence of qualitative behavior of the numerical solutions on
   the ignition temperature for a combustion model},
   journal={J. Comput. Math.},
   volume={23},
   date={2005},
   number={4},
   pages={337--350},
   issn={0254-9409},
}	

\bib{Z_ARMA11}{article}{
	author={Zumbrun, Kevin},
	title={Stability of detonations in the ZND limit},
	journal={Arch. Ration. Mech. Anal.},
   volume={200},
   date={2011},
   number={1},
   pages={141--182},
   issn={0003-9527},
}

\bib{Z_IMA}{article}{
	author={Zumbrun, Kevin},
	title={Stability and dynamics of viscous shock waves},
	journal={IMA Vol. Math. Appl.},
   volume={153},
   date={2011},
   pages={123--167},
}

\bib{Z_hand}{article}{
   author={Zumbrun, Kevin},
   title={Stability of large-amplitude shock waves of compressible
   Navier-Stokes equations},
   note={With an appendix by Helge Kristian Jenssen and Gregory Lyng},
   conference={
      title={Handbook of mathematical fluid dynamics. Vol. III},
   },
   book={
      publisher={North-Holland},
      place={Amsterdam},
   },
   date={2004},
   pages={311--533},
}

\bib{Z_num}{article}{
	author={Zumbrun, Kevin},
	title= {Numerical error analysis for Evans function computations:
a numerical gap lemma, centered-coordinate methods,
and the unreasonable effectiveness of continuous orthogonalization },
   note={Preprint.},
   date={2009},
}

\bib{Z_hf}{article}{
	author={Zumbrun, Kevin},
	title={High-frequency asymptotics and 1-D stability of Zel'dovich--von Neumann--D\"oring detonations in the small-heat release and high-overdrive limits},
	journal={Arch. Ration. Mech. Anal.},
	volume={203},
	number={3},
	year={2012},
	pages={707--717}
}

\end{biblist}
\end{bibdiv}

\end{document}